\def\grad{\mathop{\mathrm{grad}}\nolimits}
\def\clift#1{#1^{\scriptscriptstyle{\mathrm{C}}}}
\def\hlift#1{#1^{\scriptscriptstyle{\mathrm{H}}}}
\def\hbarlift#1{#1^{\bar{\scriptscriptstyle{\mathrm{H}}}}}
\def\vlift#1{#1^{\scriptscriptstyle{\mathrm{V}}}}
\def\C{\mathcal{C}}
\def\D{\mathcal{D}}
\def\L{\mathfrak{L}}
\def\R{\mathbb{R}}
\def\S{\mathcal{S}}
\def\V{\mathcal{V}}
\def\X{\mathcal{X}}
\def\Y{\mathcal{Y}}
\def\lie#1{{\mathcal{L}}_{#1}}
\def\vectorfields#1{\mathfrak{X}(#1)}
\def\onehalf{{\textstyle\frac12}}
\def\conn#1#2#3{\setbox1=\hbox{$\scriptstyle{#2}{#3}$}%
\setbox2=\hbox to\wd1{$\hfil\scriptstyle{#1}\hfil$}
\Gamma^{\!\box2}_{\!\box1}}
\def\barconn#1#2#3{\setbox1=\hbox{$\scriptstyle{#2}{#3}$}%
\setbox2=\hbox to\wd1{$\hfil\scriptstyle{#1}\hfil$}
\check{\Gamma}^{\!\box2}_{\!\box1}}
\def\hook{{\mathchoice
{\vrule height 0pt depth 0.4pt width 3pt \vrule height 5pt depth 0.4pt
\kern 3pt}
{\vrule height 0pt depth 0.4pt width 3pt  \vrule height 5pt depth
0.4pt\kern 3pt}
{\vrule height 0pt depth 0.2pt width 1.5pt  \vrule height 3pt depth
0.2pt width 0.2pt \kern 1pt}
{\vrule height 0pt depth 0.2pt width 1.5pt  \vrule height 3pt depth
0.2pt width 0.2pt \kern 1pt} }}
\newtheorem{thm}{Theorem}
\newtheorem{lem}{Lemma}
\newtheorem{prop}{Proposition}
\newtheorem{cor}{Corollary}
\begin{document}

\title{The Cartan form for constrained Lagrangian systems and the
nonholonomic Noether theorem}

\author{M.\ Crampin and T.\ Mestdag\\
Department of Mathematics, Ghent University\\
Krijgslaan 281, S9, B--9000 Gent, Belgium}

\date{}

\maketitle

{\small {\bf Abstract.} This paper deals with conservation laws for
mechanical systems with nonholonomic constraints.  It uses a
Lagrangian formulation of nonholonomic systems and a Cartan form
approach.  We present what we believe to be the most general relations
between symmetries and first integrals.  We discuss the so-called
nonholonomic Noether theorem in terms of our formalism, and we give
applications to Riemannian submanifolds, to Lagrangians of mechanical
type, and to the determination of quadratic first integrals.\\[2mm]
{\bf Mathematics Subject Classification (2000).} 34A26, 37J60, 70G45,
70H03, 70H33.\\[2mm]
{\bf Keywords.} Lagrangian system, nonholonomic constraints, Noether
theorem, symmetries, first integrals, conservation laws.}

\section{Introduction}

This paper is concerned with some aspects of the search for
conservation laws for mechanical systems with nonholonomic constraints
(i.e.\ velocity-dependent constraints).  Such constraints show up
naturally e.g.\ in robotics and in control theory when one considers
rigid bodies rolling over a surface or over each other, or possessing
a contact point with a surface in the form of a knife edge.  Standard
references to the differential geometric approach to systems with
nonholonomic constraints are the recent books
\cite{Bloch,Cortes,Cushman}.  In this paper we follow a Lagrangian
formulation of nonholonomic systems.  The constraints will be assumed
to be linear in velocities and both the Lagrangian and the constraints
will be assumed to be independent of time.

We shall investigate the relation between first integrals (or constants
of the motion) on the one hand and symmetries on the other hand.  In
the absence of nonholonomic constraints, this relation is often
described by what is commonly called `Noether's (first)
theorem'.  Although anyone working in the field is
familiar with this terminology (see \cite{Kos} for a recent book
on the history of the subject), a quick scan through the literature
immediately reveals that the precise formulation of the theorem is
somewhat subject to personal taste.  In fact, the terminology
`Noether's theorem' is associated with many slightly different
manifestations of the same group of underlying ideas.  For example,
some authors relate Noether's theorem to invariance transformations of
the action functional (up to a gauge term) depending on position (and
possibly time) only, while others also use the term Noether's theorem
for generalizations to velocity-dependent transformations.  At the
infinitesimal level, the first point of view relates to the existence
of a vector field on the configuration space whose complete or
tangent lift to velocity space preserves the Lagrangian, while the
second viewpoint is related to the existence of a vector field on
velocity space, not necessarily projectable to one on the
configuration space, that, among other things, preserves the so-called
Cartan 2-form.  Throughout this paper, we shall refer only to the
first viewpoint as the `Noether theorem', while we shall call the
second viewpoint the `Cartan form approach'.  For unconstrained
Lagrangian systems, the relation between the two viewpoints was well
established in e.g.\ \cite{Mike}, where it is argued that the Cartan
form approach is superior since it embodies a simple direct
correspondence between symmetries and first
integrals, and in any case contains the Noether theorem (in the first
sense) as a special case.  Besides the two above-mentioned versions
of the Noether theorem, one may find in the literature many other
generalizations.  For a review on those we refer to
\cite{WillyFrans}.

Given the lack of consensus for standard Lagrangian systems, it is no
surprise that the situation is even more troublesome for the
translation of the ideas behind the Noether theorem to the context of
Lagrangian systems with additional nonholonomic constraints.  It is
well-known that for those systems the relation between symmetries and
constants of the motion is no longer as natural as it is for the
unconstrained case.  Nevertheless, many people have investigated the
conditions under which symmetry properties of the system do lead to
conservation laws.  These investigations have resulted mostly in
generalizations of the two viewpoints described above for the
unconstrained situation.  For example, papers discussing the first
point of view are those by Fass\`o and co-workers
\cite{Fasso2,Fasso4,Fasso,Fasso3} and Iliev {\em et al.\/}
\cite{Iliev,IlSem}.  Results concerning the Cartan form viewpoint may
be found in e.g.\ \cite{Gia} (for time-dependent systems) and
\cite{BatesSny,CdL,Cush} (from the Hamiltonian point of view).
Actually, the bulk of the literature focusses on a very special case:
the one where the system is invariant under the tangent lift of an
action of a Lie group (see e.g.\
\cite{BKMM,AMZ,Frans,CMR2,Koiller,Zenkov}) (none of these lists is
meant to be exhaustive).  Unfortunately, for nonholonomic systems it
is far from immediately obvious how the two distinct viewpoints are to
be compared.  The main goal of this paper is to come to a transparent
description of these two levels of generalization and of their
interaction.

Throughout the paper we shall take advantage of a simple and
well-known observation we have used also in previous publications
\cite{nonholvak,nonholsym}:\ the dynamics can be represented by means
of a vector field, and so can any symmetry (at an infinitesimal level).
In Section~2 we recall the version of the d'Alembert principle we
stated in \cite{nonholvak} and we interpret it here in terms of the
fibre metric given by the Hessian of the Lagrangian.  In the following
section we discuss some generalities concerning the restriction of the
Cartan 2-form to the constraint submanifold, we present what we
believe to be the most general relations between symmetries and
constants of the motion in the Cartan form approach and we discuss the
special case where the nonholonomic distribution is maximally
non-integrable.  In the fourth section we translate the nonholonomic
Noether theorem of \cite{Fasso} to our formalism.  We further show how
it can be derived from the results in the previous section and we
discuss some special cases.  The last section contains applications of
the previous results to Riemannian submanifolds, to Lagrangians of
mechanical type, and to the search for linear and quadratic integrals.

We shall assume that the reader is familiar with the basic tools and
concepts needed for the geometric description of Lagrangian systems,
such as the vertical and complete lifts $\vlift X$ and $\clift X$ of a
vector field $X$ on $Q$, the vertical endomorphism $S$, the concept of
a second-order ordinary differential equation vector field,
etc. For definitions and basic properties
we refer to e.g.\ \cite{CP,DR}.

\section{The Lagrange-d'Alembert principle and the fibre metric}\label{sect2}

Our starting point is the formulation of the equations determining the
dynamics of a regular Lagrangian system subject to nonholonomic linear
constraints which we gave in \cite{nonholvak}.  The constraints may be
defined by either a distribution $\D$ on configuration
space $Q$ (the constraint
distribution), or a submanifold $\C$ of $TQ$ (the constraint
submanifold).  The two are related as follows: $\C=\{u\in TQ:u\in
\D_q\subset T_qQ, q=\tau(u)\}$; $\tau$ stands here for the tangent
bundle projection $TQ\to Q$.  We assume that the dimension of each
$\D_q$ is constant and equal to $m$.  Throughout the paper we shall
denote by $\iota$ the injection $\C\to TQ$.  A vector field
$\Gamma$ on $\C$ is said to be {\em of second-order type} if it
satisfies $\tau_{*u}\Gamma=u$ for all $u\in\C$.  A Lagrangian
function $L$ on $TQ$ is said to be {\em regular with respect to $\D$}
if for any local basis $\{X_\alpha\}$ of $\D$, $1\leq\alpha\leq m$,
the symmetric $m\times m$ matrix whose entries are
$\vlift{X_\alpha}(\vlift{X_\beta}(L))$ (functions on $\C$) is
nonsingular.  In \cite{nonholvak} we proved the following proposition.

\begin{prop} \label{dAlembPrinc}
Let $L$ be a Lagrangian on $TQ$ which is regular with respect to $\D$.
Then there is a unique vector field $\Gamma$ on $\C$ which is of
second-order type, is tangent to $\C$, and is such that on $\C$
\[
\Gamma(\vlift{Z}(L))-\clift{Z}(L)=0
\]
for all $Z\in\D$.  Moreover, $\Gamma$ may be determined from the
equations
\[
\Gamma(\vlift{X_\alpha}(L))-\clift{X_\alpha}(L)=0, \quad
\alpha=1,2,\ldots m,
\]
on $\C$, where $\{X_\alpha\}$ is any local basis for $\D$.
\end{prop}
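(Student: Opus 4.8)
The plan is first to show that the displayed intrinsic condition is equivalent to the $m$ equations written in a local basis $\{X_\alpha\}$ (which simultaneously settles the \emph{Moreover} clause and the independence of the construction from the choice of basis), and then to solve the latter equations fibrewise by linear algebra, the regularity hypothesis guaranteeing a unique solution.

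For the reduction, I would take an arbitrary $Z\in\D$, write it locally as $Z=f^\alpha X_\alpha$ with $f^\alpha$ functions on $Q$, and expand both terms using the lift identities $\vlift{(fX)}=(f\circ\tau)\,\vlift{X}$ and $\clift{(fX)}=(f\circ\tau)\,\clift{X}+\clift{f}\,\vlift{X}$. Since $\Gamma$ is of second-order type, $\Gamma(g\circ\tau)=\iota^*(\clift{g})$ for every function $g$ on $Q$; applying this to $g=f^\alpha$ shows that the two terms containing $\clift{f^\alpha}$ cancel, leaving
\[
\Gamma(\vlift{Z}(L))-\clift{Z}(L)=(f^\alpha\circ\tau)\bigl(\Gamma(\vlift{X_\alpha}(L))-\clift{X_\alpha}(L)\bigr)
\]
on $\C$. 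Hence the condition for all $Z\in\D$ holds precisely when the $m$ basis equations hold, and the resulting $\Gamma$ cannot depend on the basis chosen.

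For existence and uniqueness I would work in adapted coordinates. Completing $\{X_\alpha\}$ to a local frame $\{X_\alpha,X_a\}$ of $\vectorfields{Q}$ ($m+1\le a\le n=\dim Q$) and passing to the corresponding quasi-velocities, $\C$ is the locus where the $v^a$ vanish and $(q^i,v^\alpha)$ are coordinates on $\C$. The most general second-order-type vector field on $\C$ then has the form $\Gamma=v^\alpha X_\alpha^i\,\partial/\partial q^i+\Gamma^\beta\,\partial/\partial v^\beta$, the base part being fixed by the condition $\tau_{*u}\Gamma=u$ and the $m$ fibre coefficients $\Gamma^\beta$ remaining free (tangency to $\C$ being automatic for a vector field on $\C$). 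Writing $p_\alpha=\vlift{X_\alpha}(L)$, the only dependence of $\Gamma(\iota^*p_\alpha)$ on the unknowns sits in the term $\Gamma^\beta\,\partial(\iota^*p_\alpha)/\partial v^\beta$, and a short computation identifies the fibre derivative $\partial(\iota^*p_\alpha)/\partial v^\beta$ with the restriction to $\C$ of $\vlift{X_\beta}(\vlift{X_\alpha}(L))$. The basis equations therefore read $g_{\alpha\beta}\,\Gamma^\beta=R_\alpha$, where $g_{\alpha\beta}=\vlift{X_\alpha}(\vlift{X_\beta}(L))|_{\C}$ is exactly the regularity matrix and $R_\alpha=\iota^*(\clift{X_\alpha}(L))-v^\gamma X_\gamma^i\,\partial(\iota^*p_\alpha)/\partial q^i$ are known functions on $\C$.

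The main point is then immediate: regularity of $L$ with respect to $\D$ says $g_{\alpha\beta}$ is nonsingular, so the linear system has the unique solution $\Gamma^\beta=g^{\beta\alpha}R_\alpha$, giving a unique second-order-type $\Gamma$ on each coordinate patch; the basis-independence established above lets these local fields be glued into a single $\Gamma$ on $\C$. I expect the main obstacle to be the bookkeeping in the third step — verifying that the coefficient of the unknowns $\Gamma^\beta$ is precisely the Hessian $\vlift{X_\alpha}(\vlift{X_\beta}(L))$ and that no further dependence on $\Gamma^\beta$ is hidden in the base term — since it is this identification that lets the regularity hypothesis do its work.
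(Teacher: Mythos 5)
Your proof is correct, and it follows essentially the same route as the paper's source for this proposition (the paper itself gives no proof here, citing \cite{nonholvak}): reduce the condition for all $Z\in\D$ to the $m$ basis equations by $C^\infty(Q)$-linearity, using the second-order property $\Gamma(f\circ\tau)=\iota^*(\clift{f})$ to cancel the derivative-of-coefficient terms, and then solve fibrewise in quasi-velocities adapted to a frame $\{X_\alpha,X_a\}$, where the coefficient matrix of the unknowns $\Gamma^\beta$ is exactly the Hessian block $g_{\alpha\beta}=\vlift{X_\alpha}(\vlift{X_\beta}(L))|_\C$, invertible by regularity with respect to $\D$. Your identification $\partial(\iota^*p_\alpha)/\partial v^\beta=\iota^*(\vlift{X_\beta}(\vlift{X_\alpha}(L)))$ is sound because $\vlift{X_\beta}=\partial/\partial v^\beta$ in quasi-velocity coordinates and is tangent to $\C$ for $X_\beta\in\D$, so the regularity hypothesis does precisely the work you assign to it.
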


This is our version of the Lagrange-d'Alembert principle; the vector
field $\Gamma$ is the dynamical field of the constrained system.

We shall make use of the following result.  For any smooth map between
manifolds $\phi:M\to N$, if $\xi\in\vectorfields M$ (the module of
smooth vector fields on $M$) and
$\eta\in\vectorfields N$ are $\phi$-related and $\chi$ is any form on
$N$ then $\lie{\xi}(\phi^*\chi)=\phi^*(\lie{\eta}\chi)$.  We shall
apply it with $\phi=\iota$, when $\xi$ is tangent to $\C$ (so
that the restriction of $\xi$ to $\C$ is $\iota$-related to $\xi$
itself), to obtain $\lie{\xi}(\iota^*\chi)=\iota^*(\lie{\xi}\chi)$
(where on the left-hand side $\xi$ should be understood as $\xi|_\C$).
This holds, mutatis mutandis, when $\chi$ is a function.  In a similar
vein, if $\xi$ is tangent to $\C$ then
$\xi\hook\iota^*\chi=\iota^*(\xi\hook\chi)$.

We shall assume that the Lagrangian $L$, in addition to being regular
with respect to $\D$, is also regular with respect to $TQ$ (i.e.\ that
$L$ is a regular Lagrangian in the standard sense).  Remark that in
case the Hessian of $L$ is positive definite, $L$ will
automatically be regular with respect to both $TQ$ and $\D$.  The
dynamical vector field $\Gamma$ defined in the proposition above should
not be confused with the standard Euler-Lagrange field $\Gamma_0$ of
$L$ on $TQ$.  The latter is uniquely determined by the condition that
on $TQ$ (i.e.\ not only on $\C$)
\[
\Gamma_0(\vlift{Z}(L))-\clift{Z}(L)=0
\]
for all vector fields on $Q$ (i.e.\ not only those in $\D$).  We shall
not make a notational distinction between $\Gamma_0$ and its
restriction to $\C$.  It is easy to see that, on $\C$, $\Gamma -
\Gamma_0$ is vertical (with respect to the projection $\tau$).

Let $g$ be the Hessian of $L$ with respect to fibre coordinates.  It
can be considered as a fibre metric on $TQ$, so that it defines a
scalar product of vertical vectors, as follows.  Let us equip each
fibre of $TQ$, which is of course a vector space, with the flat
connection, and denote its covariant derivative operator by
$\nabla^0$.  Then for any pair of vertical vector fields $V$ and $W$,
$\nabla^0_VW$ makes sense, and is a vertical vector field.  Moreover
$\nabla^0_VW-\nabla^0_WV=[V,W]$.  Set $g(V,W)=V(W(L))-\nabla^0_VW(L)$.
Then $g$ is bilinear over $C^\infty(TQ)$ and symmetric.  If we take
$V=\vlift{X}$, $W=\vlift{Y}$ for vector fields $X$, $Y$ on $Q$ we get
back the usual definition of the Hessian,
$\vlift{X}(\vlift{Y}(L))=g(\vlift{X},\vlift{Y})$, since $\nabla^0_V\vlift{Y}=0$ for any vertical $V$; but note
that we have extended the ring of coefficients from $C^\infty(Q)$ to
$C^\infty(TQ)$.

The interesting point is that $\Gamma - \Gamma_0$ is perpendicular
(with respect to $g$) to $\vlift{X}$ for all $X$ in $\D$.  So in an
obvious sense, $\Gamma$ is the perpendicular projection of the
unconstrained dynamics $\Gamma_0$ onto $\C$.  The basic fact is
evident:\ on $\C$, $(\Gamma-\Gamma_0)(\vlift{X}(L))=0$ for all $X$ in
$\D$, since
$\Gamma(\vlift{X}(L))=\clift{X}(L))=\Gamma_0(\vlift{X}(L))$.  In view
of the definition of $g$ it follows that
$g(\Gamma-\Gamma_0,\vlift{X})=0$ for all $X\in\D$.  In fact
$g(\Gamma-\Gamma_0,V)=0$ for any vertical vector field $V$ tangent to
$\C$; or if we restrict attention to any fibre $T_qQ$, we see that
$(\Gamma-\Gamma_0)_{T_qQ}$ is normal, with respect to $g|_{T_qQ}$, to
the submanifold (indeed, linear subspace) $\C_q$ of $T_qQ$.  It will
be convenient to say that a vector field $V$ is {\em fibre-normal\/} to
$\C$ if it is vertical and $g(V,W)=0$ for all vertical vector fields
$W$ which are tangent to $\C$.  Then $\Gamma-\Gamma_0$ is fibre-normal
to $\C$.

This discussion may be summarized in the form of the following
proposition, which amounts to an alternative formulation of the
Lagrange-d'Alembert principle of Proposition~\ref{dAlembPrinc}.

\begin{prop} Let $L$ be a Lagrangian on $TQ$ which is regular with
respect to both $\D$ and $TQ$.  Then the dynamical field $\Gamma$ of
the corresponding constrained system is the unique vector field
tangent to $\C$ such that $\Gamma-\Gamma_0$ is vertical and
fibre-normal to $\C$ with respect to the fibre metric determined by
$L$.
\end{prop}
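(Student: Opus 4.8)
The plan is to split the statement into existence-with-properties and uniqueness, noting that the former is essentially done already. The discussion preceding the proposition shows that the dynamical field $\Gamma$ of Proposition~\ref{dAlembPrinc} has all three required properties: it is tangent to $\C$ by construction; $\Gamma-\Gamma_0$ is vertical; and the identity $(\Gamma-\Gamma_0)(\vlift{X}(L))=0$ for all $X\in\D$ translates, through the definition of $g$, into $g(\Gamma-\Gamma_0,\vlift{X})=0$ for all $X\in\D$. Since the $\vlift{X}$ with $X\in\D$ span the vertical vectors tangent to $\C$, this says precisely that $\Gamma-\Gamma_0$ is fibre-normal. So the only thing left to establish is that these properties determine $\Gamma$ uniquely.

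For the uniqueness I would take any vector field $\Gamma'$ tangent to $\C$ with $\Gamma'-\Gamma_0$ vertical and fibre-normal, and study the difference $v:=\Gamma'-\Gamma$. It is tangent to $\C$, since both $\Gamma'$ and $\Gamma$ are; it is vertical, because $v=(\Gamma'-\Gamma_0)-(\Gamma-\Gamma_0)$ is a difference of vertical fields; and it is fibre-normal, because the fibre-normal condition is linear, so the difference of two fibre-normal fields is again fibre-normal. Hence $v$ is at once a vertical vector field tangent to $\C$ and fibre-normal to $\C$.

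The conclusion then comes from nondegeneracy. At a point $u\in\C$ the vertical vectors tangent to $\C$ form the subspace spanned by $\{\vlift{X_\alpha}\}$ at $u$, for a local basis $\{X_\alpha\}$ of $\D$; on this subspace the Gram matrix of $g$ has entries $g(\vlift{X_\alpha},\vlift{X_\beta})=\vlift{X_\alpha}(\vlift{X_\beta}(L))$, which is nonsingular exactly because $L$ is regular with respect to $\D$. Thus $g$ restricted to this subspace is nondegenerate. But at $u$ the field $v$ lies in this subspace and, being fibre-normal, is $g$-orthogonal to every vector of it; nondegeneracy therefore forces $v=0$ at $u$, and as $u$ is arbitrary we get $\Gamma'=\Gamma$.

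The one genuinely delicate point is this last step: the whole argument hinges on reading `regular with respect to $\D$' as the nondegeneracy of $g$ on the vertical tangent spaces of $\C$, and on the remark that the vertical vectors tangent to $\C$ are indeed spanned by the $\vlift{X_\alpha}$ (which holds because each fibre $\C_q=\D_q$ is a linear subspace, so its tangent space is canonically itself). Once these two identifications are in place the rest is formal.
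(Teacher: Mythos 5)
Your proof is correct. The existence half coincides with the paper's: there the proposition is explicitly offered as a summary of the preceding discussion, which establishes exactly the chain you cite --- $\Gamma-\Gamma_0$ is vertical, $(\Gamma-\Gamma_0)(\vlift{X}(L))=0$ on $\C$ for all $X\in\D$, hence $g(\Gamma-\Gamma_0,\vlift{X})=0$ by the definition of $g$, and fibre-normality follows since the vertical vector fields tangent to $\C$ are spanned over the functions by the $\vlift{X_\alpha}$ and $g$ is $C^\infty$-bilinear. Where you add something is the uniqueness, which the paper leaves entirely implicit: the route it invites is to reverse the computation --- any $\Gamma'$ tangent to $\C$ with $\Gamma'-\Gamma_0$ vertical and fibre-normal is automatically of second-order type (as the paper remarks just after the proposition) and satisfies $\Gamma'(\vlift{X}(L))-\clift{X}(L)=g(\Gamma'-\Gamma_0,\vlift{X})=0$ for $X\in\D$, so it is caught by the uniqueness clause of Proposition~\ref{dAlembPrinc}. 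You instead argue directly: the difference $v=\Gamma'-\Gamma$ is vertical, tangent to $\C$ and fibre-normal by linearity, hence at each $u\in\C$ lies in $\vlift{\D}_u$ while being $g$-orthogonal to all of $\vlift{\D}_u$, and the Gram matrix $\bigl(\vlift{X_\alpha}(\vlift{X_\beta}(L))\bigr)$ is nonsingular precisely because $L$ is regular with respect to $\D$. This is a self-contained re-proof of the nondegeneracy that underlies the uniqueness in Proposition~\ref{dAlembPrinc}; both arguments turn on the same hypothesis, but yours has the small advantages of not needing to route through the Lagrange-d'Alembert formulation and of making visible that the second-order condition need not be assumed, only deduced. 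Your two flagged ``delicate points'' (that regularity with respect to $\D$ means nondegeneracy of $g$ on the vertical tangent spaces of $\C$, and that those spaces are exactly $\vlift{\D}_u$ because $\C_q=\D_q$ is linear) are both correct and are stated in the paper as well.
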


Note that $\Gamma$ is necessarily of second-order type since it
differs from a second-order differential equation field by a vertical
field.

In what follows we shall always implicitly assume that $L$ is regular
with respect to both $\D$ and $TQ$.

The fact that $\Gamma$ is the image of $\Gamma_0$ under a projection
operator appears in a paper of de Le\'{o}n and Mart\'\i n de Diego \cite{deL},
but the result is derived there in a way which somewhat obscures the role of
the Hessian $g$.

\section{The Cartan form approach}

Let $S$ be the vertical endomorphism on $TQ$.  It can be defined by
means of its action on vertical and complete lifts: $S(\clift X) =
\vlift X$ and $S(\vlift X)=0$.  It is common to call the form
$\theta_L=dL\circ S$, or more succinctly $S(dL)$, the {\em Cartan 1-form}; the {\em Cartan 2-form}
is $\omega_L = d\theta_L$.  Some authors also use `Poincar\'e-Cartan forms'
or simply `Poincar\'e forms' for these forms. According to \cite{Krupka}
Poincar\'e was the first to introduce the forms, while Cartan defined their
extension to the context of time-dependent Lagrangians. The
Cartan 2-form can be thought of as the K\"ahler lift of the Hessian
$g$.  As such, $\omega_L$ evaluated on a pair of vertical vectors
gives zero, $\omega_L(S(X),Y)=\omega_L(S(Y),X)$, and if $V$ is
vertical then $\omega_L(V,Z)=g(V,S(Z))$.  The next subsection is
concerned with the properties of the restriction of the form
$\omega_L$ to the constraint submanifold $\C$, in both senses:\
$\omega_L|_{\C}$ and $\iota^*\omega_L$.

At any point $u\in\C\subset TQ$, the tangent space $T_u\C$ projects
onto $T_qQ$ (where $q=\tau(u)$), and the kernel of the projection is
the vertical lift of $\D_q$.  We denote the vertical lift of $\D_q$ to
$u$ by $\vlift{\D}_u$, and the vertical subspace of $T_u(TQ)$ by
$V_u(TQ)$.  The subspace of $V_u(TQ)$ consisting of vectors
fibre-normal to $\vlift{\D}_u$, say $(\vlift{\D}_u)^\perp$, is
complementary to $T_u\C$ in $T_u(TQ)$.  An obvious move is to consider
those vectors $\xi\in T_u\C$ such that $S(\xi)\in\vlift{\D}_u$, and
those such that $S(\xi)\in(\vlift{\D}_u)^\perp$.  Denote the former by
$\tilde{\D}_u$, the latter by $\tilde{\D}^\perp_u$.  Then
$\tilde{\D}_u$ and $\tilde{\D}^\perp_u$ are subspaces of $T_u\C$,
which together span it; but they are not complementary, in fact
$\tilde{\D}_u\cap \tilde{\D}^\perp_u=\vlift{\D}_u$.

Let us consider the distribution $\tilde{\D}:u\mapsto\tilde{\D}_u$ in
greater detail.  It is clearly projectable to $Q$, and
$\tau_{|\C*}\tilde{\D}=\D$.  Its kernel under projection is just
$\vlift{\D}$, that is, the vertical distribution on $\C$.  So the fact
that $\tau_{|\C*}\tilde{\D}=\D$ defines $\tilde{\D}$; we can write
$\tilde{\D}_u=\tau_{|\C*u}{}^{-1}(\D_{\tau(u)})$.

We can now use the distributions $\tilde{\D}$ and $\tilde{\D}^\perp$
to construct a convenient local basis for $\vectorfields{TQ}$, or in
other words an anholonomic frame on $TQ$, adapted to the study of the
restriction of $\omega_L$ to $\C$.

\subsection{Construction of a suitable anholonomic frame}

We can evidently find local vector fields
$\X_\alpha\in\tilde{\D}$ and $\X_a\in\tilde{\D}^\perp$ such that
\begin{itemize}
\item for each $u$, $\{\tau_{*u}\X_\alpha,\tau_{*u}\X_a\}$ is a basis for $T_qQ$;
\item $\{S(\X_\alpha)\}$ is a basis for $\vlift{\D}$;
\item $\{S(\X_a)\}$ is a basis for $(\vlift{\D})^\perp$.
\end{itemize}
We could start with a basis $\{X_i\} = \{X_\alpha,X_a\} $ of vector
fields on $Q$ (an anholonomic frame on $Q$) with $\{X_\alpha\}$ a
basis for $\D$.  Take for $\X_\alpha$ some projection of
$\clift{X_\alpha}$ into $T\C$ along vertical vectors:\ then
$S(\X_\alpha)=\vlift{X_\alpha}$.  By suitable similar modifications of
the $\clift{X_a}$ we can fulfill the other requirements.  Note that
the vector fields $\X_i$ cannot be assumed to be projectable to
$Q$.

Denote $S(\X_\alpha)$ by $\Y_\alpha$ and $S(\X_a)$ by $\Y_a$.  Then
$\{\X_\alpha,\X_a,\Y_\alpha\}$ is a local basis for
$\vectorfields\C$, and $\{\X_\alpha,\X_a,\Y_\alpha,\Y_a\}$ is a
local basis for $\vectorfields{TQ}$.  We have $g(\Y_\alpha,\Y_a)=0$,
where $g$ is the fibre metric.  Let us set
$g(\Y_\alpha,\Y_\beta)=g_{\alpha\beta}$ and $g(\Y_a,\Y_b)=g_{ab}$.  If
$L$ is regular with respect to $\D$, $(g_{\alpha\beta})$ is
nonsingular.  From the above relation between $\omega_L$ and $g$, we
get
\[
\omega_L(\Y_\alpha,\X_\beta)=g_{\alpha\beta},\quad
\omega_L(\Y_a,\X_b)=g_{ab},\quad
\omega_L(\Y_\alpha,\X_a)=\omega_L(\Y_a,\X_\alpha)=0.
\]
Set $\omega_L(\X_\alpha,\X_\beta)=\omega_{\alpha\beta}$ and so on.  If
we change $\X_a$ to
$\bar{\X}_a=\X_a-g^{\alpha\beta}\omega_{a\beta}\Y_\alpha$ then
$\bar{\X}_a\in\tilde{\D}^\perp_u$, $S(\bar{\X}_a)=\Y_a$, but
\[
\omega_L(\bar{\X}_a,\X_\alpha)=\omega_{a\alpha}-
g^{\beta\gamma}\omega_{a\beta}g_{\alpha\gamma}=0.
\]
So without loss of generality we can assume that $\omega_{a\alpha}=0$.
We are still free to modify $\X_\alpha$ similarly, by the addition of
a linear combination of the $\Y_\alpha$.  Since $g_{a\alpha}=0$, this
won't alter the value of $\omega_{a\alpha}$.  Let
$\bar{\X}_\alpha=\X_\alpha-\onehalf
g^{\beta\gamma}\omega_{\alpha\gamma}\Y_\beta$.  Then
\[
\omega_L(\bar{\X}_\alpha,\bar{\X}_\beta)=\omega_{\alpha\beta}-
\onehalf g^{\gamma\delta}\omega_{\alpha\delta}g_{\beta\gamma}+
\onehalf g^{\gamma\delta}\omega_{\beta\delta}g_{\alpha\gamma}=0.
\]
So without loss of generality we can further assume that
$\omega_{\alpha\beta}=0$.

We shall assume from now on that these modifications have been made,
and we shall drop the overbars.  The constructions of the previous
paragraphs may then be summarized as follows.

Consider the following three distributions on $\C$.
\begin{itemize}
\item $\vlift{\D} = \langle \Y_\alpha \rangle$, of dimension
$\dim\D=m$, is the vertical lift of $\D$ to $\C$;
\item $\hat{\D}=\langle\X_\alpha\rangle$, of dimension
$m$, projects onto $\D$, satisfies $S(\hat{\D})=\vlift{\D}$, is
a complement to $\vlift{\D}$ in $\tilde{\D}$, and is isotropic
with respect to $\omega_L$;
\item $\tilde{\D}^\top= \langle\X_a\rangle$, of dimension $n-m$,
projects onto a complement to $\D$ in $\vectorfields Q$, is a complement to
$\tilde{\D}$ in $\vectorfields\C$, and is symplectically orthogonal to
$\tilde{\D}$; it satisfies
$S(\tilde{\D}^\top)=(\vlift{\D})^\perp$.
\end{itemize}
These three distributions are related to $\tilde{\D}$, to
$\vectorfields\C$ and to $\vectorfields{TQ}$ as follows:
\begin{align*}
\tilde{\D}&=\vlift{\D}\oplus\hat{\D} \\
\vectorfields\C&=\tilde{\D}\oplus\tilde{\D}^\top\\
\vectorfields{TQ}&=\vectorfields\C\oplus (\vlift{\D})^\perp.
\end{align*}

Let $\{\vartheta^\alpha,\vartheta^a,\varphi^\alpha,\varphi^a\}$ be the
local basis of 1-forms on $TQ$, or coframe, which is dual to the basis
$\{\X_\alpha,\X_a,\Y_\alpha,\Y_a\}$ of $\vectorfields{TQ}$.  Then
$\vartheta^i=S(\varphi^i)$ ($i=\alpha,a$).  Also,
$\{\vartheta^\alpha,\vartheta^a,\varphi^\alpha\}$, when restricted to
acting on $\vectorfields\C$, is a local basis of 1-forms on $\C$ (or
more accurately,
$\{\iota^*\vartheta^\alpha,\iota^*\vartheta^a,\iota^*\varphi^\alpha\}$
is a basis for 1-forms on $\C$; however, we shall usually ignore this
refinement, trusting that it will be clear from the context what is
intended); and $\langle\varphi^a\rangle$ is $\vectorfields\C^\circ$,
the annihilator of $\vectorfields\C$, so that $\iota^*\varphi^a=0$.
In terms of this coframe
\[
\omega_L|_\C=g_{\alpha\beta}\varphi^\alpha\wedge\vartheta^\beta
+g_{ab}\varphi^a\wedge\vartheta^b
+\onehalf\omega_{ab}\vartheta^a\wedge\vartheta^b.
\]
Evidently for $\omega_L|_\C$ to be symplectic (that is, for $L$ to be
regular) it must be the case that $(g_{ab})$ is nonsingular (this will
automatically be so if the Hessian of $L$ is positive definite).
Assuming $L$ is regular, at any $u\in\C$ the symplectic orthogonal to
$T_u\C$ in $T_u(TQ)$ is given by
\[
(T_u\C)^\top=\big\langle \X_a+g^{bc}\omega_{ab}\Y_c\big\rangle_u.
\]
The characteristic subspace $\chi(\iota^*\omega_L)$ of
$\iota^*\omega_L$ is $(T_u\C)^\top\cap T_u\C$, and is therefore given
by
\[
\chi(\iota^*\omega_L)=\{\xi^a\X_a\colon\xi^b\omega_{ab}=0\}
=\chi\left(\omega_L|_{\tilde{\D}^\top_u}\right).
\]

In summary, we have the following proposition.

\begin{prop}
We can find a frame $\{\X_\alpha,\X_a,\Y_\alpha,\Y_a\}$ on $TQ$
such that $\langle\Y_\alpha\rangle=\vlift{\D}$,
$\langle\X_\alpha,\Y_\alpha\rangle=\tilde{\D}$,
$\langle\X_\alpha,\X_a,\Y_\alpha\rangle=\vectorfields\C$,
$\langle\Y_a\rangle=(\vlift{\D})^\perp$; and such that with respect
to the dual coframe
$\{\vartheta^\alpha,\vartheta^a,\varphi^\alpha,\varphi^a\}$ we have
$\langle\varphi^a\rangle=\vectorfields\C^\circ$ and
\[
\iota^*\omega_L=g_{\alpha\beta}\varphi^\alpha\wedge\vartheta^\beta
+\onehalf\omega_{ab}\vartheta^a\wedge\vartheta^b.
\]
\end{prop}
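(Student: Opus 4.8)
The proposition collects the results of the construction carried out in the preceding subsection, so the plan is to assemble those steps into a single statement and to check that the final coframe expression for $\iota^*\omega_L$ drops out. First I would recall the existence of the adapted frame: starting from an anholonomic frame $\{X_\alpha,X_a\}$ on $Q$ with $\{X_\alpha\}$ spanning $\D$, one produces $\X_\alpha,\X_a$ by projecting the complete lifts $\clift{X_\alpha},\clift{X_a}$ into $T\C$ along vertical vectors, so that $\X_\alpha\in\tilde{\D}$, $\X_a\in\tilde{\D}^\perp$, and $\Y_i=S(\X_i)$ spans $\vlift{\D}$ and $(\vlift{\D})^\perp$ respectively. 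The distribution identifications $\langle\Y_\alpha\rangle=\vlift{\D}$, $\langle\X_\alpha,\Y_\alpha\rangle=\tilde{\D}$, $\langle\X_\alpha,\X_a,\Y_\alpha\rangle=\vectorfields\C$ and $\langle\Y_a\rangle=(\vlift{\D})^\perp$ are then immediate from the way the frame was built.

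Next I would record the components of $\omega_L$ in this frame. Using $\omega_L(V,Z)=g(V,S(Z))$ for vertical $V$, together with $g(\Y_\alpha,\Y_a)=0$, one obtains $\omega_L(\Y_\alpha,\X_\beta)=g_{\alpha\beta}$, $\omega_L(\Y_a,\X_b)=g_{ab}$ and the vanishing mixed terms $\omega_L(\Y_\alpha,\X_a)=\omega_L(\Y_a,\X_\alpha)=0$, while $\omega_L$ vanishes on every pair of vertical vectors. The two normalizations then remove the remaining $\X$--$\X$ components: replacing $\X_a$ by $\X_a-g^{\alpha\beta}\omega_{a\beta}\Y_\alpha$ kills $\omega_{a\alpha}$ without disturbing $S(\X_a)=\Y_a$ or membership in $\tilde{\D}^\perp$, and a subsequent shift of $\X_\alpha$ by a multiple of $\Y_\beta$ kills $\omega_{\alpha\beta}$. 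The point needing care here---and the only genuine subtlety in the argument---is that the second modification must not reintroduce a nonzero $\omega_{a\alpha}$; this is guaranteed precisely by $g_{a\alpha}=0$, so the two normalizations are compatible and may be performed in this order.

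Finally I would pass to the dual coframe $\{\vartheta^\alpha,\vartheta^a,\varphi^\alpha,\varphi^a\}$. Since it is dual to $\{\X_\alpha,\X_a,\Y_\alpha,\Y_a\}$ and $\vectorfields\C=\langle\X_\alpha,\X_a,\Y_\alpha\rangle$, the form $\varphi^a$ annihilates every vector field tangent to $\C$, whence $\langle\varphi^a\rangle=\vectorfields\C^\circ$ and so $\iota^*\varphi^a=0$. Expanding $\omega_L$ in the coframe with the components just computed gives
\[
\omega_L|_\C=g_{\alpha\beta}\varphi^\alpha\wedge\vartheta^\beta
+g_{ab}\varphi^a\wedge\vartheta^b
+\onehalf\omega_{ab}\vartheta^a\wedge\vartheta^b,
\]
and applying $\iota^*$ annihilates the middle term, leaving the asserted expression for $\iota^*\omega_L$. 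The argument is thus essentially bookkeeping; its substance lies entirely in the compatibility of the successive frame adjustments and in the identification $\iota^*\varphi^a=0$.
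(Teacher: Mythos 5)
Your proposal is correct and follows essentially the same route as the paper, which establishes this proposition precisely by the preceding construction: projecting complete lifts of an adapted frame on $Q$ into $T\C$ along vertical vectors, then normalizing first $\X_a$ (via $\X_a-g^{\alpha\beta}\omega_{a\beta}\Y_\alpha$) and then $\X_\alpha$ (via the shift $-\onehalf g^{\beta\gamma}\omega_{\alpha\gamma}\Y_\beta$), with the compatibility of the two adjustments resting, exactly as you say, on $g_{a\alpha}=0$ so that $\omega_L(\Y_\beta,\X_a)=g_{\beta a}=0$. Your final step, expanding $\omega_L|_\C$ in the dual coframe and using $\iota^*\varphi^a=0$ to kill the $g_{ab}\varphi^a\wedge\vartheta^b$ term, is likewise the paper's argument verbatim.
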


(Strictly speaking there should be an $\iota^*$ on each 1-form on the
right-hand side of the final formula.)  We shall use this expression
for $\iota^*\omega_L$ repeatedly below.  Note that at $u\in\C$ the
first term on the right-hand side is the restriction of
$\iota^*\omega_L$ to $\tilde{\D}_u$, the second its restriction to
$\tilde{\D}^\top_u$; these are complementary subspaces of $T_u\C$ and
are symplectically orthogonal.  Moreover,
$\chi(\iota^*\omega_L)\subset\tilde{\D}^\top_u$; as is evident also
from the expression above, $\iota^*\omega_L$ is nonsingular on
$\tilde{\D}_u$.

Of course, we do not claim that the frame described in the
proposition is unique.

\subsection{First integrals and symmetries}

We are now ready to establish some relations between first integrals
on the one hand and symmetries on the other.  Some of the results
in this section are closely related to analogous results that may be
found in \cite{Gia} (albeit in the context of time-dependent
constraints) and \cite{BatesSny,Cush} (albeit in a Hamiltonian
formalism).  The reader should also keep in mind that by setting the
constraint submanifold $\C$ equal to the whole tangent manifold $TQ$,
one recovers the well-known relations between symmetries and constants
of motion, to be found in e.g.\ \cite{Mike,WillyFrans}.

Let us now work locally on $\C$, in terms of the local bases of vector
fields $\{\X_\alpha,\Y_\alpha,\X_a\}$, and the dual basis of 1-forms
(i.e.\ sections of $T^*\C\to\C$)
$\{\vartheta^\alpha,\varphi^\alpha,\vartheta^a\}$.  Note that
$\{\vartheta^a\}$ is a basis for $\tilde{\D}^\circ$, the annihilator
of $\tilde{\D}$.

Given a 1-form $\psi$ on $\C$, there need not be a vector field $Z$ on
$\C$ such that $Z\hook\iota^*\omega_L=\psi$:\ it is necessary that
$\psi(W)=0$ for all $W\in\chi(\iota^*\omega_L)$; and if
such $Z$ exists it won't be unique.  However, we have the following
important result, which establishes a modified construction for
obtaining vector fields from 1-forms.

\begin{prop}\label{corr}
For any 1-form $\psi$ on $\C$, there is a unique vector field $Z$ on
$\C$ such that $Z\in\tilde{\D}$ and $Z\hook\iota^*\omega_L-\psi\in
\tilde{\D}^\circ$.
\end{prop}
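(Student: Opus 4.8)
The plan is to reduce the statement to a fibrewise linear-algebra fact: the restriction of $\iota^*\omega_L$ to $\tilde{\D}$ is nondegenerate, so it sets up an isomorphism between $\tilde{\D}$ and its dual. The key observation is that two $1$-forms on $\C$ agree modulo $\tilde{\D}^\circ$ exactly when they agree upon restriction to $\tilde{\D}$; hence the condition $Z\hook\iota^*\omega_L-\psi\in\tilde{\D}^\circ$ says precisely that $\iota^*\omega_L(Z,W)=\psi(W)$ for every $W\in\tilde{\D}$. So I must find a unique $Z\in\tilde{\D}$ representing the functional $W\mapsto\psi(W)$ on $\tilde{\D}$ through the form $\iota^*\omega_L|_{\tilde{\D}}$, and existence and uniqueness will follow once this restricted form is shown to be nonsingular.

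First I would record the nondegeneracy. In the frame of the previous proposition the forms $\vartheta^a$ vanish on $\tilde{\D}=\langle\X_\alpha,\Y_\alpha\rangle$, so $\iota^*\omega_L|_{\tilde{\D}}=g_{\alpha\beta}\varphi^\alpha\wedge\vartheta^\beta$; its matrix in the basis $\{\X_\alpha,\Y_\alpha\}$ is built from the block $(g_{\alpha\beta})$, which is nonsingular because $L$ is regular with respect to $\D$. This is exactly the nondegeneracy of $\iota^*\omega_L$ on $\tilde{\D}_u$ already noted in the discussion preceding the proposition, so I may simply invoke it.

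Next I would carry out the computation explicitly, which makes uniqueness transparent. Write the general element of $\tilde{\D}$ as $Z=Z^\alpha\X_\alpha+\zeta^\alpha\Y_\alpha$ and expand $\psi=p_\alpha\vartheta^\alpha+q_\alpha\varphi^\alpha+r_a\vartheta^a$ in the coframe on $\C$. From the expression for $\iota^*\omega_L$ one reads off $\X_\gamma\hook\iota^*\omega_L=-g_{\alpha\gamma}\varphi^\alpha$ and $\Y_\gamma\hook\iota^*\omega_L=g_{\gamma\beta}\vartheta^\beta$, so that
\[
Z\hook\iota^*\omega_L=-g_{\alpha\gamma}Z^\gamma\varphi^\alpha+g_{\gamma\beta}\zeta^\gamma\vartheta^\beta.
\]
Requiring $Z\hook\iota^*\omega_L-\psi$ to lie in $\tilde{\D}^\circ=\langle\vartheta^a\rangle$ amounts to the vanishing of its $\varphi^\alpha$- and $\vartheta^\beta$-components, i.e.\ $g_{\alpha\gamma}Z^\gamma=-q_\alpha$ and $g_{\gamma\beta}\zeta^\gamma=p_\beta$; the $\vartheta^a$-component of $\psi$ imposes no condition, being already absorbed into $\tilde{\D}^\circ$.

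Finally, since $(g_{\alpha\beta})$ is invertible these two systems have the unique solutions $Z^\gamma=-g^{\gamma\alpha}q_\alpha$ and $\zeta^\gamma=g^{\gamma\beta}p_\beta$, yielding the unique $Z\in\tilde{\D}$ required. The one point needing care is the observation that the $\vartheta^a\wedge\vartheta^b$ part of $\iota^*\omega_L$ and the $\vartheta^a$ part of $\psi$ are invisible once one works modulo $\tilde{\D}^\circ$, so the whole problem collapses onto the nonsingular block $(g_{\alpha\beta})$. This is really the heart of the matter, and it is the reason the characteristic directions, which live in $\tilde{\D}^\top$, cause no obstruction here.
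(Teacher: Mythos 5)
Your proof is correct and follows essentially the same route as the paper: both arguments work in the adapted frame where $\iota^*\omega_L=g_{\alpha\beta}\varphi^\alpha\wedge\vartheta^\beta+\onehalf\omega_{ab}\vartheta^a\wedge\vartheta^b$, reduce the condition modulo $\tilde{\D}^\circ=\langle\vartheta^a\rangle$ to a linear system governed by the nonsingular block $(g_{\alpha\beta})$, and your solution $Z=-g^{\alpha\beta}q_\beta\X_\alpha+g^{\alpha\beta}p_\beta\Y_\alpha$ is exactly the paper's displayed vector field $Z=g^{\alpha\beta}\big(\langle\X_\beta,\psi\rangle\Y_\alpha-\langle\Y_\beta,\psi\rangle\X_\alpha\big)$. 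Your uniqueness argument via nondegeneracy of $\iota^*\omega_L|_{\tilde{\D}}$ is the same fact the paper records as Corollary~\ref{zero}, merely phrased as a fibrewise representation statement.
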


\begin{proof}
The vector field $Z$ given by
\[
Z=g^{\alpha\beta}\Big(\langle\X_\beta,\psi\rangle\Y_\alpha-\langle
\Y_\beta,\psi\rangle\X_\alpha\Big)
\]
is well defined because of the assumed regularity of $L$.  It belongs
to $\tilde{\D}$ and satisfies $Z\hook\iota^*\omega_L-\psi\in
\tilde{\D}^\circ$.  Now if $Y\in\tilde{\D}$ then
$Y\hook\iota^*\omega_L\in\langle\vartheta^\alpha,\varphi^\alpha\rangle$,
by inspection of the expression for $\iota^*\omega_L$; so if
$Y\in\tilde{\D}$ and $Y\hook\iota^*\omega_L\in\tilde{\D}^\circ$ then
$Y=0$, and the vector field $Z$ displayed above is uniquely determined.
\end{proof}

The penultimate statement is worth recording separately.

\begin{cor}\label{zero}
If $Z\in\tilde{\D}$ and $Z\hook\iota^*\omega_L\in\tilde{\D}^\circ$
then $Z=0$.
\end{cor}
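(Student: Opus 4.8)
The plan is to read off the claim directly from the explicit local expression for $\iota^*\omega_L$ established in the previous proposition, namely
\[
\iota^*\omega_L=g_{\alpha\beta}\varphi^\alpha\wedge\vartheta^\beta
+\onehalf\omega_{ab}\vartheta^a\wedge\vartheta^b.
\]
The corollary is nothing but the penultimate sentence of the preceding proof isolated as a standalone statement, so the natural strategy is to exhibit the same short computation. I would begin by writing a general $Z\in\tilde{\D}$ in the adapted frame; since $\tilde{\D}=\langle\X_\alpha,\Y_\alpha\rangle$, we may put $Z=a^\alpha\X_\alpha+b^\alpha\Y_\alpha$ for local functions $a^\alpha,b^\alpha$ on $\C$.

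Next I would compute the contraction $Z\hook\iota^*\omega_L$ term by term against the displayed two-form. The key observation is that $\Y_\alpha$ annihilates the $\vartheta^a\wedge\vartheta^b$ part entirely (since $\langle\Y_\alpha,\vartheta^a\rangle=0$), and $\X_\alpha$ also kills that part (since $\langle\X_\alpha,\vartheta^a\rangle=0$); so only the first term $g_{\alpha\beta}\varphi^\alpha\wedge\vartheta^\beta$ contributes. A direct calculation then gives
\[
Z\hook\iota^*\omega_L
=g_{\alpha\beta}\big(b^\alpha\vartheta^\beta-a^\beta\varphi^\alpha\big),
\]
which lies in $\langle\vartheta^\alpha,\varphi^\alpha\rangle$, exactly as asserted in the proof above. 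This is the first half of the statement: for $Z\in\tilde{\D}$, the one-form $Z\hook\iota^*\omega_L$ has no $\vartheta^a$-component.

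Finally I would impose the hypothesis $Z\hook\iota^*\omega_L\in\tilde{\D}^\circ=\langle\vartheta^a\rangle$. Comparing with the expression just computed, which sits in the complementary span $\langle\vartheta^\alpha,\varphi^\alpha\rangle$, forces every coefficient to vanish: $g_{\alpha\beta}b^\alpha=0$ and $g_{\alpha\beta}a^\beta=0$ for all indices. Since $L$ is regular with respect to $\D$, the matrix $(g_{\alpha\beta})$ is nonsingular, so $a^\alpha=b^\alpha=0$ and hence $Z=0$. The only step requiring genuine care, rather than bookkeeping, is the nondegeneracy appeal at the end; everything else is a routine contraction. So the \emph{main obstacle} is really just making sure the frame conventions and the duality pairings are tracked correctly so that the two blocks $\langle\vartheta^a\rangle$ and $\langle\vartheta^\alpha,\varphi^\alpha\rangle$ are seen to be linearly independent — which is immediate once one recalls that $\{\vartheta^\alpha,\vartheta^a,\varphi^\alpha\}$ is a coframe on $\C$.
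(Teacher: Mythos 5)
Your proof is correct and follows essentially the same route as the paper, which proves this corollary inside the proof of Proposition~\ref{corr} by observing that for $Z\in\tilde{\D}$ one has $Z\hook\iota^*\omega_L\in\langle\vartheta^\alpha,\varphi^\alpha\rangle$ by inspection of the displayed expression for $\iota^*\omega_L$, so membership in $\tilde{\D}^\circ=\langle\vartheta^a\rangle$ forces $Z=0$. You merely make the inspection explicit by writing $Z=a^\alpha\X_\alpha+b^\alpha\Y_\alpha$ and invoking the nonsingularity of $(g_{\alpha\beta})$, which is exactly the regularity appeal implicit in the paper's argument.
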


It is well-known (see \cite{CP,DR}) that $\Gamma_0$, the dynamical
vector field of the unconstrained system, can be characterized as the
unique second-order vector field satisfying $\Gamma_0\hook\omega_L +
dE_L = 0$.  The function $E_L=\Delta(L)-L$ is the energy of the
Lagrangian $L$ and $\Delta$ is the Liouville field (the infintesimal
generator of scaling transformations).  It follows that the dynamical
vector field of the constrained system, $\Gamma$, is determined by the
1-form $-d(\iota^*E_L)$ in the way described in
Proposition~\ref{corr}.  Recall first of all that
$\Gamma\in\tilde{\D}$ (it is tangent to $\C$ and at each $u\in\C$,
$\tau_{*u}\Gamma=u\in\D_{\tau(u)}$); and secondly that
$\Gamma-\Gamma_0\in(\vlift{\D})^\perp$:\ say
$\Gamma=\Gamma_0+\gamma^a\Y_a$ (the $\gamma^a$ are multipliers in some
manifestation).  Then on $\C$
\[
\Gamma\hook\omega_L+dE_L=
(\Gamma-\Gamma_0)\hook\omega_L=
\gamma^a\Y_a\hook\omega_L=\gamma^bg_{ab}\vartheta^a.
\]
Now apply $\iota^*$, and the result follows.

\begin{thm}\label{int}
For $f$ a function on $\C$, let $Z_f$ be the unique vector field on
$\C$ such that $Z_f\in\tilde{\D}$ and $Z_f\hook\iota^*\omega_L-df\in
\tilde{\D}^\circ$. Then $f$ is a first integral of $\Gamma$ if and
only if $Z_f(\iota^*E_L)=0$.
\end{thm}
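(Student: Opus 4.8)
The plan is to establish the clean symmetric identity
\[
\Gamma(f)=Z_f(\iota^*E_L),
\]
from which the stated equivalence is immediate. Everything rests on reading off the defining relations of $\Gamma$ and of $Z_f$ in a form that can be tested against vector fields, and then pairing the two relations against each other via the antisymmetry of $\iota^*\omega_L$.

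First I would record the two key facts in testable form. By the discussion preceding the theorem, $\Gamma$ is exactly the vector field assigned by Proposition~\ref{corr} to the 1-form $-d(\iota^*E_L)$; thus $\Gamma\in\tilde{\D}$, and unwrapping the condition $\Gamma\hook\iota^*\omega_L+d(\iota^*E_L)\in\tilde{\D}^\circ$ gives
\[
\iota^*\omega_L(\Gamma,W)=-W(\iota^*E_L)\quad\text{for all }W\in\tilde{\D}.
\]
Likewise, by its very definition $Z_f\in\tilde{\D}$, and the condition $Z_f\hook\iota^*\omega_L-df\in\tilde{\D}^\circ$ reads
\[
\iota^*\omega_L(Z_f,W)=W(f)\quad\text{for all }W\in\tilde{\D}.
\]

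The decisive observation is that \emph{both} $\Gamma$ and $Z_f$ lie in $\tilde{\D}$, so each may legitimately serve as the test field $W$ in the other's relation. Substituting $W=Z_f$ in the first identity gives $\iota^*\omega_L(\Gamma,Z_f)=-Z_f(\iota^*E_L)$, while substituting $W=\Gamma$ in the second gives $\iota^*\omega_L(Z_f,\Gamma)=\Gamma(f)$. Invoking the antisymmetry $\iota^*\omega_L(Z_f,\Gamma)=-\iota^*\omega_L(\Gamma,Z_f)$ then yields $\Gamma(f)=Z_f(\iota^*E_L)$. Since $f$ is a first integral precisely when $\Gamma(f)=0$, the equivalence with $Z_f(\iota^*E_L)=0$ follows at once.

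The one real subtlety -- and the step I would flag as the crux -- is the requirement that the two defining relations be applied only with test fields drawn from $\tilde{\D}$, since each characterization pins down its vector field solely modulo $\tilde{\D}^\circ$. This is exactly what makes the memberships $\Gamma,Z_f\in\tilde{\D}$ essential: they are what permit the two partial relations to be cross-evaluated, and (via Corollary~\ref{zero}) what guarantees that no ambiguity remains in the construction. Nothing beyond linear algebra in a single fibre is needed once these two facts are in hand; the proof is therefore short, with the whole force residing in the symmetric identity itself.
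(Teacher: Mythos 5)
Your proof is correct and is essentially the paper's own argument: the paper likewise uses that $\Gamma,Z_f\in\tilde{\D}$ to cross-evaluate the two defining relations, computing $\Gamma(f)=\Gamma\hook(Z_f\hook\iota^*\omega_L)=-Z_f\hook(\Gamma\hook\iota^*\omega_L)=Z_f(\iota^*E_L)$. You have merely spelled out the same chain in more detail, correctly identifying the memberships in $\tilde{\D}$ as the crux.
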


\begin{proof}
Since $\Gamma,Z_f\in\tilde{\D}$,
\[
\Gamma(f)=\Gamma\hook(Z_f\hook\iota^*\omega_L)=
-Z_f\hook(\Gamma\hook\iota^*\omega_L)=Z_f(\iota^*E_L),
\]
and the result follows.
\end{proof}

This procedure sets up a 1-1 correspondence between local first
integrals of $\Gamma$ (determined up to the addition of a constant)
and vector fields $Z\in\tilde{\D}$ such that
$\lie{Z}(\iota^*\omega_L)\in d(\tilde{\D}^\circ)$ (i.e.\ such that
$\lie{Z}(\iota^*\omega_L)=d\phi$ for some $\phi\in\tilde{\D}^\circ$)
and $Z(\iota^*E_L)=0$.  Indeed,
$\lie{Z}(\iota^*\omega_L)=d(Z\hook\iota^*\omega_L)=d\phi$ for
$\phi\in\tilde{\D}^\circ$ if and only if $Z\hook\iota^*\omega_L$
differs from $\phi\in\tilde{\D}^\circ$ by a closed, and so locally
exact, 1-form. If we set $\lie{Z}(\iota^*\theta_L)-\phi=dF$, then $f$
above is given by $f=F-\iota^*\theta_L(Z)=F-S(Z)(L)$.

Notice that $\iota^*E_L$ is a first integral, with
corresponding vector field $\Gamma$, though the proposition is vacuous
in this case; however, it is clear from the facts that
$\Gamma\hook\iota^*\omega_L+d(\iota^*E_L)\in\tilde{\D}^\circ$ and
$\Gamma\in\tilde{\D}$ that $\Gamma(\iota^*E_L)=0$.

We can now consider the question of whether there is any correlation
between conserved quantities and symmetries for constrained systems.
By an {\em infinitesimal symmetry\/} of $\Gamma$ we mean a
vector field $Z$ tangent to $\C$ such that $\lie{Z}\Gamma=0$.  We
shall give two different sets of conditions for deriving a symmetry
from a first integral:\ the first result involves conditions on
$\Gamma$, the second on the vector field $Z_f\in\tilde{\D}$
corresponding to a first integral $f$.  We derive them both as
corollaries of the following proposition.

\begin{prop}\label{presym}
Let $Z$ be a vector field tangent to $\C$ such that
$\tilde{\D}\hook\lie{Z}(\iota^*\omega_L)\subset\tilde{\D}^\circ$ and
$\lie{Z}(\tilde{\D})\subset\tilde{\D}$.  Let $f$ be any function on
$\C$ and $Z_f\in\tilde{\D}$ the corresponding vector field.  Then
$\lie{Z}(Z_f)\hook\iota^*\omega_L-d(Z(f))\in\tilde{\D}^\circ$. If,
further, $Z(f)=0$ then $\lie{Z}(Z_f)=0$.
\end{prop}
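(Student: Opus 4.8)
The plan is to work directly with the defining relation of $Z_f$, namely $Z_f\in\tilde{\D}$ together with $Z_f\hook\iota^*\omega_L-df\in\tilde{\D}^\circ$, and to push it through the Lie derivative $\lie{Z}$. The guiding identity will be the naturality of the Lie derivative with respect to interior products, $\lie{Z}(Y\hook\alpha)=(\lie{Z}Y)\hook\alpha+Y\hook(\lie{Z}\alpha)$, applied with $Y=Z_f$ and $\alpha=\iota^*\omega_L$. First I would apply $\lie{Z}$ to the equation $Z_f\hook\iota^*\omega_L=df+\phi$ (where $\phi\in\tilde{\D}^\circ$ is the error term), obtaining
\[
(\lie{Z}Z_f)\hook\iota^*\omega_L+Z_f\hook\lie{Z}(\iota^*\omega_L)
=d(Z(f))+\lie{Z}\phi .
\]
Here I have used $\lie{Z}(df)=d(Z(f))$, which holds because $\lie{Z}$ commutes with $d$. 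The goal is to show the left–over terms all land in $\tilde{\D}^\circ$, so that $(\lie{Z}Z_f)\hook\iota^*\omega_L-d(Z(f))\in\tilde{\D}^\circ$.

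The key steps are then to dispose of the three correction terms. The term $Z_f\hook\lie{Z}(\iota^*\omega_L)$ is controlled by the first hypothesis: since $Z_f\in\tilde{\D}$, and $\tilde{\D}\hook\lie{Z}(\iota^*\omega_L)\subset\tilde{\D}^\circ$, this term lies in $\tilde{\D}^\circ$ immediately. For the term $\lie{Z}\phi$, I would argue that $\lie{Z}$ preserves the annihilator $\tilde{\D}^\circ$ whenever $\lie{Z}(\tilde{\D})\subset\tilde{\D}$: for $Y\in\tilde{\D}$ one has $(\lie{Z}\phi)(Y)=Z(\phi(Y))-\phi(\lie{Z}Y)$, and both terms vanish since $\phi$ annihilates $\tilde{\D}$ and $\lie{Z}Y\in\tilde{\D}$ by the second hypothesis; hence $\lie{Z}\phi\in\tilde{\D}^\circ$. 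Collecting these, the displayed equation reduces modulo $\tilde{\D}^\circ$ to exactly $(\lie{Z}Z_f)\hook\iota^*\omega_L-d(Z(f))\in\tilde{\D}^\circ$, which is the first assertion.

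For the second assertion, I would first observe that $\lie{Z}Z_f\in\tilde{\D}$: this is again the content of the hypothesis $\lie{Z}(\tilde{\D})\subset\tilde{\D}$, applied to $Z_f\in\tilde{\D}$. Now if additionally $Z(f)=0$, then $d(Z(f))=0$, so the first assertion becomes $(\lie{Z}Z_f)\hook\iota^*\omega_L\in\tilde{\D}^\circ$ with $\lie{Z}Z_f\in\tilde{\D}$. This is precisely the situation of Corollary~\ref{zero}, which forces $\lie{Z}Z_f=0$.

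I expect the main subtlety to be the bookkeeping for $\lie{Z}\phi$: one must take care that $\phi$ is a genuine $\tilde{\D}^\circ$-valued object rather than a fixed form, and verify that the condition $\lie{Z}(\tilde{\D})\subset\tilde{\D}$ really does imply invariance of its annihilator $\tilde{\D}^\circ$ under $\lie{Z}$. This is where the second hypothesis of the proposition earns its place, and it is the one step where a careless interchange of $\lie{Z}$ with the defining equation of $Z_f$ could go wrong. Everything else is a direct application of the interior-product Leibniz rule and the uniqueness already recorded in Corollary~\ref{zero}.
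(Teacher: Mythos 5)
Your proposal is correct and follows essentially the same route as the paper: Lie-differentiate the defining relation $Z_f\hook\iota^*\omega_L-df=\phi\in\tilde{\D}^\circ$, use the first hypothesis to place $Z_f\hook\lie{Z}(\iota^*\omega_L)$ in $\tilde{\D}^\circ$, verify $\lie{Z}\phi\in\tilde{\D}^\circ$ by the same pointwise computation $(\lie{Z}\phi)(Y)=Z(\phi(Y))-\phi(\lie{Z}Y)$, and finish with Corollary~\ref{zero}. The only difference is cosmetic (you keep the Leibniz identity on the left-hand side rather than moving terms across), so there is nothing to add.
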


\begin{proof}
We have $Z_f\hook\iota^*\omega_L-df=\phi\in\tilde{\D}^\circ$.
Take the Lie derivative with respect to $Z$ to obtain
\[
\lie{Z}(Z_f)\hook\iota^*\omega_L-d(Z(f))
=-Z_f\hook\lie{Z}(\iota^*\omega_L)+\lie{Z}\phi.
\]
Since $Z_f\in\tilde{\D}$, by assumption the first term on the
right-hand side belongs to $\tilde{\D}^\circ$.
Now for any $Y\in\tilde{\D}$,
\[
\lie{Z}\phi(Y)=Z(\phi(Y))-\phi(\lie{Z}Y)=0
\]
since $\lie{Z}Y\in\tilde{\D}$ by assumption.  Thus
$\lie{Z}\phi\in\tilde{\D}^\circ$.  So
$\lie{Z}(Z_f)\hook\iota^*\omega_L-d(Z(f))\in\tilde{\D}^\circ$.  If
$Z(f)=0$ (or indeed if $Z(f)$ is constant) then
$\lie{Z}(Z_f)\in\tilde{\D}$ while
$\lie{Z}(Z_f)\hook\iota^*\omega_L\in\tilde{\D}^\circ$, and so
$\lie{Z}(Z_f)=0$ by Corollary~\ref{zero}.
\end{proof}

In the course of the proof we have in effect established that
$\lie{Z}(\tilde{\D})\subset\tilde{\D}$ if and only if
$\lie{Z}(\tilde{\D}^\circ)\subset\tilde{\D}^\circ$.

The condition
$\tilde{\D}\hook\lie{Z}(\iota^*\omega_L)\subset\tilde{\D}^\circ$ is
equivalent to $\lie{Z}(\iota^*\omega_L)(\tilde{\D},\tilde{\D})=0$, or
in other words the distribution $\tilde{\D}$ is isotropic for
$\lie{Z}(\iota^*\omega_L)$; but in view of the appeal to
Corollary~\ref{zero} the formulation in the statement of the
proposition seems preferable.

\begin{cor}\label{one}
Suppose that $\Gamma$ satisfies the conditions specified for $Z$ in
the proposition above.  Then for any first integral $f$, $Z_f$ is a
symmetry of $\Gamma$.
\end{cor}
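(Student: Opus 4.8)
The plan is simply to specialise Proposition~\ref{presym} to the case $Z=\Gamma$. By hypothesis $\Gamma$ satisfies the two standing conditions imposed there on $Z$, namely $\tilde{\D}\hook\lie{\Gamma}(\iota^*\omega_L)\subset\tilde{\D}^\circ$ and $\lie{\Gamma}(\tilde{\D})\subset\tilde{\D}$, so the proposition applies with no further work. I would take $f$ to be an arbitrary first integral of $\Gamma$ and let $Z_f\in\tilde{\D}$ be the associated vector field furnished by Proposition~\ref{corr}.

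The first key step is to observe that $f$ being a first integral means exactly $\Gamma(f)=0$, which is precisely the extra hypothesis ``$Z(f)=0$'' in the second assertion of Proposition~\ref{presym} (now read with $Z=\Gamma$). That assertion then gives at once $\lie{\Gamma}(Z_f)=0$. The second step is to translate this into the statement that $Z_f$ is a symmetry: by definition an infinitesimal symmetry of $\Gamma$ is a vector field tangent to $\C$ whose Lie bracket with $\Gamma$ vanishes, and $Z_f$ is indeed tangent to $\C$ since $Z_f\in\tilde{\D}\subset\vectorfields\C$. Using antisymmetry of the bracket, $\lie{\Gamma}(Z_f)=[\Gamma,Z_f]=-[Z_f,\Gamma]=-\lie{Z_f}\Gamma$, so $\lie{\Gamma}(Z_f)=0$ is equivalent to $\lie{Z_f}\Gamma=0$, which is what we want.

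I expect no real obstacle: all the substance is already contained in Proposition~\ref{presym}, and the corollary is just its specialisation together with the trivial sign manipulation for the bracket. The only point to keep straight is the direction of the Lie derivative — the proposition controls $\lie{\Gamma}(Z_f)$, whereas the notion of symmetry is phrased through $\lie{Z_f}\Gamma$ — but these vanish simultaneously, so the conclusion follows.
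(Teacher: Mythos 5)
Your proposal is correct and is essentially identical to the paper's own proof: both specialise Proposition~\ref{presym} with $Z=\Gamma$, use $\Gamma(f)=0$ to conclude $\lie{\Gamma}(Z_f)=0$, and flip the sign of the bracket to obtain $\lie{Z_f}\Gamma=0$. Your extra remarks (that $Z_f\in\tilde{\D}$ is tangent to $\C$, and that the two Lie derivatives vanish simultaneously) are harmless elaborations of the same one-line argument.
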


\begin{proof}
Since $\Gamma(f)=0$, $\lie{Z_f}\Gamma=-\lie{\Gamma}(Z_f)=0$.
\end{proof}

\begin{cor}\label{other}
Let $f$ be a first integral of $\Gamma$, $Z_f$ the corresponding
vector field.  Suppose that $Z_f$ satisfies the conditions specified
for $Z$ in the proposition above.  Then $Z_f$ is a symmetry of
$\Gamma$.
\end{cor}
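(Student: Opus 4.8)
The plan is to apply Proposition~\ref{presym} with the roles arranged so that the vector field playing the part of $Z$ is $Z_f$ itself, while the ``arbitrary function'' fed into the proposition is the energy. The crucial preliminary observation is that $\Gamma$ is itself one of the vector fields produced by the construction of Proposition~\ref{corr}. Indeed, it was recorded above that on $\C$ we have $\Gamma\hook\iota^*\omega_L+d(\iota^*E_L)\in\tilde{\D}^\circ$ together with $\Gamma\in\tilde{\D}$; by the uniqueness clause of Proposition~\ref{corr} this says exactly that $\Gamma=Z_{-\iota^*E_L}$, i.e.\ $\Gamma$ is the vector field corresponding, in the sense of that proposition, to the function $-\iota^*E_L$.

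First I would invoke Proposition~\ref{presym} taking $Z=Z_f$ --- which by hypothesis satisfies the two structural conditions $\tilde{\D}\hook\lie{Z_f}(\iota^*\omega_L)\subset\tilde{\D}^\circ$ and $\lie{Z_f}(\tilde{\D})\subset\tilde{\D}$ --- and taking the function appearing in the statement to be $-\iota^*E_L$, whose associated vector field is $\Gamma=Z_{-\iota^*E_L}$. The proposition then yields
\[
\lie{Z_f}(\Gamma)\hook\iota^*\omega_L+d\big(Z_f(\iota^*E_L)\big)\in\tilde{\D}^\circ,
\]
and, more to the point, its final assertion tells us that provided $Z_f(\iota^*E_L)$ vanishes (or is merely constant) we may conclude $\lie{Z_f}(\Gamma)=0$.

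It remains only to verify that vanishing condition, and this is where the assumption that $f$ is a first integral enters, through Theorem~\ref{int}: since $\Gamma(f)=0$, that theorem gives precisely $Z_f(\iota^*E_L)=0$. Feeding this into the last line of Proposition~\ref{presym} immediately produces $\lie{Z_f}(\Gamma)=0$, which is the statement that $Z_f$ is a symmetry of $\Gamma$.

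I do not anticipate any genuine obstacle here; the argument is essentially a matter of matching slots. The single idea the proof really turns on is the bookkeeping point of placing the energy $-\iota^*E_L$ (rather than $f$) into the ``function'' slot of Proposition~\ref{presym}, so that $Z_f$ occupies the slot of the symmetry candidate $Z$ while $\Gamma=Z_{-\iota^*E_L}$ occupies the slot of the induced field. Once that identification is made, Theorem~\ref{int} supplies exactly the vanishing $Z_f(\iota^*E_L)=0$ demanded by the proposition, and the conclusion closes formally. (One should be mildly careful to note that the hypotheses on $Z$ in Proposition~\ref{presym} are imposed here on $Z_f$, which is the content of the corollary's assumption, and not on $\Gamma$ --- this is exactly what distinguishes Corollary~\ref{other} from Corollary~\ref{one}.)
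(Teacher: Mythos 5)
Your proposal is correct and follows exactly the paper's own argument: the paper's proof likewise takes $Z_f$ in the role of $Z$ in Proposition~\ref{presym} and $\Gamma$ (i.e.\ the field corresponding to $-\iota^*E_L$ via Proposition~\ref{corr}) in the role of the induced field, then closes with $Z_f(\iota^*E_L)=0$ from the first-integral hypothesis via Theorem~\ref{int}. Your explicit verification that $\Gamma=Z_{-\iota^*E_L}$ is exactly the ``mental gymnastics'' the paper alludes to, so there is nothing to add.
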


\begin{proof}
A certain amount of mental gymnastics is required here:\ we take
$Z_f$ for $Z$ in the proposition above, and $\Gamma$ for $Z_f$ (that
is, we take $-\iota^*E_L$ for $f$). Since $f$ is a first integral,
$Z_f(\iota^*E_L)=0$, and the final conclusion of the proposition holds.
\end{proof}

From the second of these corollaries we obtain the following theorem,
which gives sufficient conditions on a vector field $Z\in\tilde{\D}$
for it to both be a symmetry and generate a first integral.

\begin{thm}\label{sym}
Let $Z\in\tilde{\D}$ be such that
$\tilde{\D}\hook\lie{Z}(\iota^*\omega_L)\subset\tilde{\D}^\circ$,
$\lie{Z}(\tilde{\D})\subset\tilde{\D}$, $\lie{Z}(\iota^*\omega_L)\in
d(\tilde{\D}^\circ)$, and $Z(\iota^*E_L)=0$.  Then $Z$ is a symmetry of
$\Gamma$, and there is, at least locally, a function $f$ on $\C$ such
that $Z=Z_f$ and $\Gamma(f)=0$. The set of vector fields $Z$
satisfying these conditions forms a Lie algebra $\S$. For
$Z_1,Z_2\in\S$, with corresponding first integrals $f_1,f_2$, we have
$Z_1(f_2)=-Z_2(f_1)$, and the first integral corresponding to
$[Z_1,Z_2]$ is (up to an additive constant)  $Z_1(f_2)$.
\end{thm}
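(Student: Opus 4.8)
The plan is to handle the five assertions in turn, leaning throughout on the correspondence of Proposition~\ref{corr}, its uniqueness clause (Corollary~\ref{zero}), and the operator identity $\lie{[Z_1,Z_2]}=\lie{Z_1}\lie{Z_2}-\lie{Z_2}\lie{Z_1}$ acting on forms. First I would produce the function $f$. Since $\omega_L=d\theta_L$ is closed, so is $\iota^*\omega_L$, and Cartan's formula gives $\lie{Z}(\iota^*\omega_L)=d(Z\hook\iota^*\omega_L)$. The hypothesis $\lie{Z}(\iota^*\omega_L)\in d(\tilde{\D}^\circ)$ then reads $d(Z\hook\iota^*\omega_L-\phi)=0$ for some $\phi\in\tilde{\D}^\circ$, so locally $Z\hook\iota^*\omega_L-\phi=df$. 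With $Z\in\tilde{\D}$ the uniqueness in Proposition~\ref{corr} forces $Z=Z_f$. The remaining hypothesis $Z(\iota^*E_L)=0$ is then $Z_f(\iota^*E_L)=0$, which by Theorem~\ref{int} says exactly that $\Gamma(f)=0$; and that $Z=Z_f$ is a symmetry is precisely Corollary~\ref{other}. This disposes of the first two sentences.

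Next I would check that $\S$ is a Lie algebra. It is clearly an $\R$-vector space, since every defining condition is $\R$-linear in $Z$ (for constant coefficients $\lie{aZ}=a\lie{Z}$ and $[aZ,Y]=a[Z,Y]$). For closure under bracket set $Z_3=[Z_1,Z_2]$. Both $Z_3\in\tilde{\D}$ and $\lie{Z_3}(\tilde{\D})\subset\tilde{\D}$ follow from $\lie{Z_3}=[\lie{Z_1},\lie{Z_2}]$ and the fact that each $\lie{Z_i}$ preserves $\tilde{\D}$; likewise $Z_3(\iota^*E_L)=Z_1(Z_2(\iota^*E_L))-Z_2(Z_1(\iota^*E_L))=0$. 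Writing $\lie{Z_i}(\iota^*\omega_L)=d\psi_i$ with $\psi_i\in\tilde{\D}^\circ$, using $\lie{Z_j}d=d\lie{Z_j}$ and the equivalence noted after Proposition~\ref{presym} (so that $\lie{Z_j}$ preserves $\tilde{\D}^\circ$), I get $\lie{Z_3}(\iota^*\omega_L)=d(\lie{Z_1}\psi_2-\lie{Z_2}\psi_1)\in d(\tilde{\D}^\circ)$.

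The condition that needs real care, and where I expect the main obstacle to lie, is the isotropy requirement $\tilde{\D}\hook\lie{Z_3}(\iota^*\omega_L)\subset\tilde{\D}^\circ$. This does \emph{not} follow from $\lie{Z_3}(\iota^*\omega_L)\in d(\tilde{\D}^\circ)$: evaluating $d\phi$ on a pair $Y,Y'\in\tilde{\D}$ leaves the term $-\phi([Y,Y'])$, which need not vanish because $\tilde{\D}$ is not assumed integrable. Instead I would evaluate $\lie{Z_3}(\iota^*\omega_L)$ on $Y,Y'\in\tilde{\D}$ directly through $\lie{Z_1}\lie{Z_2}-\lie{Z_2}\lie{Z_1}$. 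Setting $\beta=\iota^*\omega_L$ and $\gamma=\lie{Z_2}\beta$, one has $(\lie{Z_1}\gamma)(Y,Y')=Z_1(\gamma(Y,Y'))-\gamma([Z_1,Y],Y')-\gamma(Y,[Z_1,Y'])$, and every term vanishes because $[Z_1,Y],[Z_1,Y']\in\tilde{\D}$ while $\gamma$ annihilates pairs from $\tilde{\D}$ by the isotropy hypothesis on $Z_2$. By symmetry the $\lie{Z_2}\lie{Z_1}$ contribution vanishes too, so $\tilde{\D}$ is isotropic for $\lie{Z_3}(\iota^*\omega_L)$ and hence $Z_3\in\S$.

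Finally I would settle the two bracket formulas. For $Z_i=Z_{f_i}$ write $Z_i\hook\iota^*\omega_L-df_i=\phi_i\in\tilde{\D}^\circ$. Since $Z_1\in\tilde{\D}$ kills $\phi_2$, contracting gives $Z_1(f_2)=Z_1\hook(Z_2\hook\iota^*\omega_L)=\iota^*\omega_L(Z_2,Z_1)$, which is antisymmetric in the two arguments, yielding $Z_1(f_2)=-Z_2(f_1)$. For the last claim I would take the Lie derivative of $Z_2\hook\iota^*\omega_L-df_2=\phi_2$ along $Z_1$, obtaining
\[
[Z_1,Z_2]\hook\iota^*\omega_L-d\big(Z_1(f_2)\big)
=\lie{Z_1}\phi_2-Z_2\hook\lie{Z_1}(\iota^*\omega_L).
\]
Both terms on the right lie in $\tilde{\D}^\circ$ (the first because $\lie{Z_1}$ preserves $\tilde{\D}^\circ$, the second because $Z_2\in\tilde{\D}$ and $Z_1$ satisfies the isotropy hypothesis). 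As $[Z_1,Z_2]\in\tilde{\D}$, the uniqueness of Proposition~\ref{corr} identifies $[Z_1,Z_2]=Z_{Z_1(f_2)}$, so $Z_1(f_2)$ is the first integral attached to $[Z_1,Z_2]$, determined up to the additive constant inherent in the correspondence.
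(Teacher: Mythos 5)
Your proposal is correct and follows essentially the same route as the paper's proof: the existence of $f$ and the symmetry property come from the remarks after Theorem~\ref{int} together with Corollary~\ref{other}, the final claim is (an in-line rederivation of) Proposition~\ref{presym} applied with $Z=Z_1$, $f=f_2$, and the Lie-algebra closure --- which the paper dismisses as ``easy to see'' --- you verify in detail, correctly isolating the one non-automatic point (that $\lie{[Z_1,Z_2]}(\iota^*\omega_L)\in d(\tilde{\D}^\circ)$ does not by itself give the isotropy condition, since $\tilde{\D}$ need not be integrable) and handling it via the Leibniz rule applied to $\lie{Z_1}\lie{Z_2}-\lie{Z_2}\lie{Z_1}$. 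The only discrepancy is a harmless intermediate sign: with the convention $(Z_2\hook\omega)(Z_1)=\omega(Z_2,Z_1)$ your formula $Z_1(f_2)=\iota^*\omega_L(Z_2,Z_1)$ is the correct one where the paper writes $\iota^*\omega_L(Z_1,Z_2)$, and antisymmetry yields $Z_1(f_2)=-Z_2(f_1)$ either way.
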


\begin{proof}
The vector field $Z$ satisfies the conditions of Theorem~\ref{int}
(see the remarks following it) and Corollary~\ref{other}.  If
$Z_1,Z_2\in\S$, then $Z_2\in\tilde{\D}$ and
$\lie{Z_1}(\tilde{\D})\subset\tilde{\D}$, so
$[Z_1,Z_2]=\lie{Z_1}Z_2\in\tilde{\D}$.  It is easy to see that
$[Z_1,Z_2]$ satisfies the other conditions, and also that
$k_1Z_1+k_2Z_2$, $k_1,k_2\in\R$, satisfies the conditions.  Thus $\S$
is a Lie algebra.  We have
$Z_1(f_2)=\iota^*\omega_L(Z_1,Z_2)=-Z_2(f_1)$. From
Proposition~\ref{presym} we see that
$[Z_1,Z_2]\hook\iota^*\omega_L-d(Z_1(f_2))\in\tilde{\D}^\circ$.
\end{proof}

\subsection{Systems with maximally nonintegrable constraint
distributions}

The statements in the previous sections can be further refined if one
is in the situation where the distribution is `as non-integrable as it
can be' (in a sense we shall explain next).  We shall show that one may
actually assume without loss of generality that this is always the
case.

For any distribution $\D$ on $Q$ let $[\D]$ be the smallest involutive
distribution containing $\D$; it consists of linear combinations of
repeated brackets of vector fields in $\D$, as the notation is
designed to suggest.  If $[\D]=\vectorfields{Q}$ we say that $\D$ is
{\em maximally nonintegrable}.  Other authors use other adjectives,
such as e.g.\ `totally nonholonomic' \cite{Ehlers} or `completely
nonholonomic' \cite{Mont}.

Let $\D'$ be the distribution on $TQ$ spanned by all vector fields
$\clift{X}$ and $\vlift{X}$ for $X\in\D$.
Then $\D'$ has the properties that $\tau_*\D'=\D$ and
$S(\D')=\V\cap\D'$ (where $\V$ is the vertical distribution), and it is
determined by these properties.  If $\D$ is involutive so is $\D'$,
since for any $X,Y\in\D$, $[\clift{X},\clift{Y}]=\clift{[X,Y]}\in\D'$,
$[\clift{X},\vlift{Y}]=\vlift{[X,Y]}\in\D'$, and of course
$[\vlift{X},\vlift{Y}]=0$.  Note that $\vectorfields{Q}'=
\vectorfields{TQ}$.

\begin{prop}
\[
[\D]'=[\D'].
\]
\end{prop}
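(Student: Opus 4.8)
The plan is to prove the two inclusions $[\D']\subseteq[\D]'$ and $[\D]'\subseteq[\D']$ separately, after which equality follows at once.

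For $[\D']\subseteq[\D]'$ I would argue by minimality. The assignment $\mathcal{E}\mapsto\mathcal{E}'$ is monotone, so from $\D\subseteq[\D]$ we get $\D'\subseteq[\D]'$. Since $[\D]$ is involutive, the remark immediately preceding the proposition (whose argument applies verbatim to any involutive distribution, not just to $\D$) shows that $[\D]'$ is involutive as well. Thus $[\D]'$ is an involutive distribution containing $\D'$; and since $[\D']$ is by definition the smallest such distribution, $[\D']\subseteq[\D]'$.

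The reverse inclusion is the substantial one. Here I would show that the involutive distribution $[\D']$ contains $\clift W$ and $\vlift W$ for every $W\in[\D]$; since these vector fields generate $[\D]'$ over $C^\infty(TQ)$, this yields $[\D]'\subseteq[\D']$. It is enough to treat the $W$ that occur as iterated brackets of vector fields in $\D$, since these span $[\D]$ over $C^\infty(Q)$. For such $W$ I would induct on the bracket depth: the base case $\clift X,\vlift X\in\D'\subseteq[\D']$ for $X\in\D$ is immediate, and a depth-$k$ bracket can be written as $[W_1,W_2]$ with $W_1,W_2$ of smaller depth, so the identities $[\clift{W_1},\clift{W_2}]=\clift{[W_1,W_2]}$ and $[\clift{W_1},\vlift{W_2}]=\vlift{[W_1,W_2]}$ — valid for arbitrary vector fields on $Q$, not merely those lying in $\D$ — combined with the closure of $[\D']$ under brackets, place $\clift{[W_1,W_2]}$ and $\vlift{[W_1,W_2]}$ in $[\D']$. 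Passing from this spanning set to an arbitrary $W=\sum_i h_iW_i\in[\D]$ (with $h_i\in C^\infty(Q)$) uses $\vlift{(h_iW_i)}=(h_i\circ\tau)\vlift{W_i}$ and the analogous expansion of $\clift{(h_iW_i)}$ as a $C^\infty(TQ)$-combination of $\vlift{W_i}$ and $\clift{W_i}$, both of which stay inside the module $[\D']$.

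The main obstacle is the bookkeeping in this second inclusion rather than anything conceptual. Working with the \emph{pure} lifts $\clift W,\vlift W$ and invoking involutivity lets me sidestep the Leibniz derivative terms that would otherwise appear when bracketing general $C^\infty(TQ)$-combinations of generators; the only genuinely delicate point is then the lift-of-product expansion used to pass from the $C^\infty(Q)$-spanning set of iterated brackets to the full $C^\infty(TQ)$-module $[\D]'$. Once that expansion is established, the two inclusions combine to give $[\D]'=[\D']$.
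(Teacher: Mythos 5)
Your proof is correct and follows essentially the same route as the paper: the first inclusion via minimality and the involutivity of $[\D]'$ (using the remark that $\E\mapsto\E'$ preserves involutivity applied to $[\D]$ --- wait, to avoid an undefined macro: using that remark applied to the involutive distribution $[\D]$), and the second via the lift identities $[\clift{W_1},\clift{W_2}]=\clift{[W_1,W_2]}$ and $[\clift{W_1},\vlift{W_2}]=\vlift{[W_1,W_2]}$, which is exactly the paper's observation that repeated brackets of lifts are lifts of repeated brackets. Your explicit induction on bracket depth and the passage from the $C^\infty(Q)$-spanning set of iterated brackets to the full $C^\infty(TQ)$-module, via $\vlift{(hW)}=(h\circ\tau)\vlift{W}$ and the analogous expansion of $\clift{(hW)}$, merely spell out details the paper's terser proof leaves implicit.
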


\begin{proof}
Evidently $\D'\subset[\D]'$ and $[\D]'$
is involutive, so $[\D']\subset[\D]'$. On the other
hand, $[\D']$ is spanned by the repeated brackets of complete
and vertical lifts of vector fields in $\D$, and these (when nonzero)
are complete or vertical lifts of vector fields in $[\D]$:\ so
$[\D]'\subset[\D']$.
\end{proof}

\begin{cor}
If $\D$ is maximally nonintegrable so is $\D'$.
\end{cor}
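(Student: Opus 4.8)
The plan is to obtain the corollary as an immediate consequence of the preceding Proposition, combined with the observation---already recorded in the excerpt---that $\vectorfields{Q}' = \vectorfields{TQ}$. The whole argument is a short chain of identifications, with no genuine computation required; all of the substantive work has already been done in the Proposition that $[\D]' = [\D']$.

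First I would unwind the hypothesis. To say that $\D$ is maximally nonintegrable is, by definition, to say that the smallest involutive distribution containing it is everything, i.e.\ $[\D] = \vectorfields{Q}$. Applying the prime construction to both sides of this equality gives $[\D]' = \vectorfields{Q}'$, and the right-hand side equals $\vectorfields{TQ}$ by the stated fact. Hence $[\D]' = \vectorfields{TQ}$.

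Next I would invoke the Proposition, which supplies $[\D]' = [\D']$. Combining this with the previous step yields $[\D'] = \vectorfields{TQ}$. Since $\D'$ is a distribution on $TQ$, this says precisely that the smallest involutive distribution on $TQ$ containing $\D'$ is all of $\vectorfields{TQ}$---that is, $\D'$ is maximally nonintegrable, which is the desired conclusion.

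There is essentially no obstacle here. The only point that warrants a moment's care is confirming that the notion of maximal nonintegrability transfers correctly to the larger manifold: the role played by $\vectorfields{Q}$ for $\D$ must be played by $\vectorfields{TQ}$ for $\D'$, and the bracket closure $[\D']$ must be understood as computed inside $\vectorfields{TQ}$. Both are guaranteed by the identity $\vectorfields{Q}' = \vectorfields{TQ}$, so the corollary drops out at once from the Proposition.
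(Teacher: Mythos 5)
Your proposal is correct and is exactly the argument the paper intends: the corollary is stated as an immediate consequence of the proposition $[\D]'=[\D']$ together with the recorded fact $\vectorfields{Q}'=\vectorfields{TQ}$, which is precisely the chain $[\D']=[\D]'=\vectorfields{Q}'=\vectorfields{TQ}$ you give.
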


Now consider, for a constrained system with constraint distribution
$\D$, those conserved quantities which are just functions on $Q$.  Of
course in the unconstrained case there aren't any; but in the
constrained case the condition for $f$ to be conserved is just that
$X(f)=0$ for all $X\in\D$.  Then evidently $X(f)=0$ for $X\in[\D]$
(see \cite{Fasso4} for a similar statement).  So these conserved
quantities are constant on the integral submanifolds of $[\D]$; and
conversely, since $\D\subset[\D]$.  So the leaves (maximal connected
integral submanifolds) of $[\D]$ are the level sets of (an independent
subset of) the conserved quantities $f$.

Let us restrict everything to a leaf $\L$ of $[\D]$.  Note
that $\D$ is still a distribution on $\L$; and since the base integral
curves of $\Gamma$ are everywhere tangent to $\D$, if they start in
$\L$ they lie in $\L$.

\begin{prop}
Let $\L$ be a leaf of $[\D]$, $\bar{L}$, $\bar{\D}$ the restrictions of $L$
and $\D$ to $\L$, then (assuming that $L$ is regular with respect to
$\D$) the dynamical field of the constrained system on $\L$ defined by
$\bar{L}$ and $\bar{\D}$ is just the restriction of $\Gamma$ to $T\L\cap\C$.
\end{prop}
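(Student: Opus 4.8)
The plan is to read off the leaf dynamics from the uniqueness statement in Proposition~\ref{dAlembPrinc} and then to check that the restriction of $\Gamma$ satisfies the three characterising properties there. First I would identify the constraint submanifold of the system on $\L$: since $\D\subset[\D]$, for $q\in\L$ we have $\D_q\subset T_q\L$, hence $\bar\D_q=\D_q$ and the constraint submanifold $\{u\in T\L:u\in\bar\D_{\tau(u)}\}$ is exactly $T\L\cap\C$. One must also confirm that $\bar L$ is regular with respect to $\bar\D$: taking a local basis $\{X_\alpha\}$ of $\bar\D$ and extending it to a local basis of $\D$, the matrix $\vlift{X_\alpha}(\vlift{X_\beta}(\bar L))$ (with lifts taken within $T\L$) is the restriction to $T\L\cap\C$ of the corresponding matrix for $L$ and $\D$, which is nonsingular by hypothesis. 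So Proposition~\ref{dAlembPrinc} applies on $\L$ and produces a unique field $\bar\Gamma$ on $T\L\cap\C$; it then suffices to verify the three requirements for $\Gamma|_{T\L\cap\C}$.

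Next I would show that $\Gamma$ is tangent to $T\L$ at points of $T\L\cap\C$, which together with its tangency to $\C$ gives tangency to $T\L\cap\C$. This comes from the second-order character of $\Gamma$: through $u\in T\L\cap\C$ the integral curve is $t\mapsto\dot c(t)$ for a base curve $c$, and $\dot c(t)\in\C$ forces $\dot c(t)\in\D_{c(t)}\subset[\D]_{c(t)}=T_{c(t)}\L$, so $c$ stays in the leaf $\L$ and hence $\dot c(t)\in T\L$ throughout; therefore $\Gamma_u\in T_u(T\L)$. Second-order type for the leaf system then follows at once, since with $\bar\tau=\tau|_{T\L}$ one has $\bar\tau_{*u}\Gamma_u=\tau_{*u}\Gamma_u=u$ for $u\in T\L\cap\C$.

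The essential step is the Lagrange--d'Alembert condition, which hinges on the behaviour of the complete and vertical lifts under the inclusion $j\colon T\L\to TQ$, for which $\bar L=j^*L$. Given $Z\in\bar\D$, extend it locally to $\tilde Z\in\D$; as $\D$ is tangent to $\L$ along $\L$, the lifts $\clift{\tilde Z}$ and $\vlift{\tilde Z}$ are tangent to $T\L$ and are $j$-related to the complete and vertical lifts of $Z$ taken within $T\L$. Applying these intrinsic leaf lifts to $\bar L=j^*L$ therefore returns the restrictions to $T\L$ of $\clift{\tilde Z}(L)$ and $\vlift{\tilde Z}(L)$. Since $\Gamma_u$ is tangent to $T\L$, the derivative along $\Gamma$ of such a restriction agrees with that of its ambient extension; hence at $u\in T\L\cap\C$ the leaf d'Alembert quantity for $\Gamma|_{T\L\cap\C}$ and $Z$ equals $\big(\Gamma(\vlift{\tilde Z}(L))-\clift{\tilde Z}(L)\big)\big|_{u}$, which vanishes by Proposition~\ref{dAlembPrinc} for the ambient system because $\tilde Z\in\D$ and $u\in\C$. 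With all three conditions verified, the uniqueness clause of Proposition~\ref{dAlembPrinc} gives $\Gamma|_{T\L\cap\C}=\bar\Gamma$.

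The step I expect to be the main obstacle is precisely this lift-compatibility claim: one has to argue carefully that a vector field in $\D$, which is tangent to $\L$ only along $\L$, nevertheless has complete and vertical lifts tangent to $T\L$ that restrict there to the lifts computed intrinsically on $\L$. Although this is a standard naturality property of the tangent functor and of the vertical endomorphism $S$, it is the single place where the geometry of the embedding $\L\hookrightarrow Q$ really enters, and it is easy to conflate lifts taken in $TQ$ with those taken in $T\L$; everything else is bookkeeping built on Proposition~\ref{dAlembPrinc}.
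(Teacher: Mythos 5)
Your proposal is correct, and it reaches the conclusion by the same overall strategy as the paper --- identify the leaf constraint submanifold $\bar{\C}$ with $T\L\cap\C$, verify that $\Gamma|_{T\L\cap\C}$ satisfies the three characterising properties of Proposition~\ref{dAlembPrinc} for the system $(\bar{L},\bar{\D})$, and invoke the uniqueness clause --- but the key step is handled differently. The paper leans on the machinery it set up just beforehand: the lifted distribution $\D'$ spanned by $\clift{X}$ and $\vlift{X}$ for $X\in\D$, and the lemma $[\D]'=[\D']$, which let it identify $T\L$ with the leaf of $[\D']$ projecting onto $\L$; tangency of $\Gamma$ to $\bar{\C}$ then drops out immediately from the fact that $\Gamma\in\D'\subset[\D']$, and the paper treats the agreement of the restricted Lagrange--d'Alembert equations with the intrinsic ones on $\L$ as essentially automatic under this identification. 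You instead obtain tangency from the second-order character of $\Gamma$ via base integral curves (an argument the paper itself states one paragraph earlier, when observing that base integral curves starting in $\L$ stay in $\L$), and you prove by hand the naturality property the paper compresses into one sentence: that for $\tilde Z\in\D$, the lifts $\clift{\tilde Z}$ and $\vlift{\tilde Z}$ are tangent to $T\L$ and $j$-related to the intrinsic lifts on $T\L$, so that the leaf d'Alembert expression is the restriction of the ambient one. You also check that $\bar{L}$ is regular with respect to $\bar{\D}$, which the paper leaves tacit in its parenthetical assumption. What each route buys: the paper's is shorter and exposes the structural reason ($\Gamma$ lives in $\D'$, and lifting commutes with passing to the involutive closure), while yours is self-contained, avoids the $[\D]'=[\D']$ lemma entirely, and correctly isolates --- and discharges --- the one genuinely delicate point, namely not conflating lifts computed in $TQ$ with lifts computed in $T\L$.
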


\begin{proof}
We can identify $T\L$ with the leaf of $[\D']$ in $TQ$ which
projects onto $\L$, and the constraint submanifold $\bar{\C}\subset
T\L$ corresponding to $\bar{\D}$ with $T\L\cap\C$.  We know that
$\Gamma$ belongs to $\D'$, and is therefore tangent to
$\bar{\C}$.  Its restriction to $\bar{\C}$ is uniquely determined by
the restriction of the equations
$\Gamma(\vlift{X}(L))-\clift{X}(L)=0$, $X\in\D$, to $\bar{\C}$; but
these are just the Lagrange-d'Alembert equations for the system on
$\L$.
\end{proof}

So without essential loss of generality we may assume that $\D$ is
maximally nonintegrable:\ if not we just have to restrict to a leaf
$\L$ of $[\D]$.  Suppose coordinates have been chosen on $Q$ so that
the leaves of $[\D]$ are given by $x^r=\mbox{constant}$ for an
appropriate range of values of $r$:\ then these coordinates will of
course appear in the expressions for base the integral curves of the
restriction of $\Gamma$ to $\L$, but only as parameters which take the
constant values appropriate to $\L$, the leaf in which the curve lies.

Let $\tilde \D$ be as in the previous section.  We also need to
identify $[\tilde{\D}]$, the smallest involutive distribution on $\C$
containing $\tilde{\D}$.  Now for any projectable vector fields
$Z_1,Z_2$ on $\C$, $[Z_1,Z_2]$ is projectable, and
$\tau_{|\C*}[Z_1,Z_2]=[\tau_{|\C*}Z_1,\tau_{|\C*}Z_2]$.  Since
$[\tilde{\D}]$ is spanned by repeated brackets of local basis vector
fields of $\tilde{\D}$, it is projectable to $Q$, and its projection
is an involutive distribution containing $\D$.  But by construction
$\tau_{|\C*}[\tilde{\D}]$ is spanned by repeated brackets of vector
fields in $\D$, so $\tau_{|\C*}[\tilde{\D}]\subset[\D]$, whence
$\tau_{|\C*}[\tilde{\D}]=[\D]$.  Clearly the kernel of $[\tilde{\D}]$
under projection is $\vlift{\D}$.  So $[\tilde{\D}]$ is determined by
the fact that $\tau_{|\C*}[\tilde{\D}]=[\D]$; that is,
$[\tilde{\D}]_u=\tau_{|\C*u}{}^{-1}([\D]_{\tau(u)})$.  In particular,
if $\D$ is maximally nonintegrable so is $\tilde{\D}$.

Clearly if $Z\in\tilde{\D}$ is such that $\lie{Z}(\iota^*\omega_L)=0$,
so that $Z$ is a symmetry of $\iota^*\omega_L$, and
$\lie{Z}(\tilde{\D})\subset\tilde{\D}$ and $Z(\iota^*E_L)=0$, then $Z$
satisfies the conditions of Theorem~\ref{sym}. When the constraint
distribution is maximally nonintegrable we have the following partial
converses.  In the first we assume that $\D$ is 2-step maximally
nonintegrable.  A distribution $\D$ on a manifold $Q$ is {\em 2-step
maximally nonintegrable} if $\D+[\D,\D]=\vectorfields{Q}$.

\begin{prop}
If $\D$ is 2-step maximally nonintegrable then a vector field
$Z\in\tilde{\D}$ satisfying
$\tilde{\D}\hook\lie{Z}(\iota^*\omega_L)\in\tilde{\D}^\circ$,
$\lie{Z}(\tilde{\D})\subset\tilde{\D}$ and
$\lie{Z}(\iota^*\omega_L)\in d(\tilde{\D}^\circ)$ is a symmetry of
$\iota^*\omega_L$.
\end{prop}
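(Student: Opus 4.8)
The plan is to show that the three hypotheses force the chosen primitive of $\lie{Z}(\iota^*\omega_L)$ to vanish identically. Write $\Omega=\lie{Z}(\iota^*\omega_L)$. By the hypothesis $\Omega\in d(\tilde{\D}^\circ)$ we may fix a primitive $\phi\in\tilde{\D}^\circ$ with $\Omega=d\phi$; since $\{\vartheta^a\}$ is a basis for $\tilde{\D}^\circ$ this means $\phi=\phi_a\vartheta^a$ for some functions $\phi_a$ on $\C$. The entire proposition will follow once I show $\phi_a=0$, for then $\Omega=d\phi=0$ and $Z$ is a symmetry of $\iota^*\omega_L$. Note that the remaining two hypotheses, $Z\in\tilde{\D}$ and $\lie{Z}(\tilde{\D})\subset\tilde{\D}$, are not actually needed for this particular implication; they are carried along only because the statement is framed as a partial converse within the setting of Theorem~\ref{sym}.

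Next I would exploit the isotropy hypothesis $\tilde{\D}\hook\Omega\subset\tilde{\D}^\circ$, which says exactly that $\Omega(\tilde{\D},\tilde{\D})=0$. Expanding $\Omega=d\phi=d\phi_a\wedge\vartheta^a+\phi_a\,d\vartheta^a$ and pairing with two fields from $\tilde{\D}$, the term $d\phi_a\wedge\vartheta^a$ drops out because $\vartheta^a$ annihilates $\tilde{\D}$; hence the isotropy condition is equivalent to $\phi_a\,(d\vartheta^a|_{\tilde{\D}\times\tilde{\D}})=0$. The key is then to identify $d\vartheta^a|_{\tilde{\D}\times\tilde{\D}}$. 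Recall from the construction that $\Y_\alpha=\vlift{X_\alpha}$ and that each $\X_\alpha$ differs from $\clift{X_\alpha}$ by a vertical field. Since vertical lifts commute and brackets of the form $[\clift{X},\text{vertical}]$ are again vertical, both $[\Y_\alpha,\Y_\beta]$ and $[\X_\alpha,\Y_\beta]$ are vertical, so they lie in $\vlift{\D}=\langle\Y_\gamma\rangle$ and are annihilated by $\vartheta^a$. Thus only the $\X_\alpha\X_\beta$ entries survive. Using $\vartheta^a=S(\varphi^a)$ together with $S(\clift{X})=\vlift{X}$ one gets $\vartheta^a(\clift{X})=\theta^a(X)$, where $\{\theta^a\}$ is the base coframe dual to $\{X_a\}$ with $\langle\theta^a\rangle=\D^\circ$; from this $d\vartheta^a(\X_\alpha,\X_\beta)=d\theta^a(X_\alpha,X_\beta)$, pulled back to $\C$. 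In other words, $d\vartheta^a|_{\tilde{\D}\times\tilde{\D}}$ is a faithful copy of the base nonintegrability $2$-forms $d\theta^a|_{\D}$.

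Finally I would bring in the 2-step hypothesis in its dual form. The condition $\D+[\D,\D]=\vectorfields{Q}$ is equivalent to the pointwise linear independence of the restricted $2$-forms $\{d\theta^a|_{\D}\}$: a relation $c_a\,d\theta^a|_{\D}=0$ means the covector $c_a\theta^a$ annihilates both $\D$ and $[\D,\D]$, hence all of $\vectorfields{Q}$, forcing $c_a=0$. By the previous paragraph this transfers to the linear independence of $\{d\vartheta^a|_{\tilde{\D}\times\tilde{\D}}\}$, so the relation $\phi_a\,d\vartheta^a|_{\tilde{\D}\times\tilde{\D}}=0$ coming from isotropy gives $\phi_a=0$ at every point, whence $\phi=0$ and $\Omega=d\phi=0$. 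The main obstacle is the middle step: one must verify carefully that the vertical and mixed brackets really do drop out of $d\vartheta^a|_{\tilde{\D}\times\tilde{\D}}$, so that this restriction reproduces the base $2$-forms $d\theta^a|_{\D}$, and then recognise the 2-step maximal nonintegrability condition in its dual, linear-independence form. The remaining steps are routine.
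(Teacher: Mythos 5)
Your overall strategy is sound and, at bottom, the same as the paper's: you correctly observe that the hypotheses $Z\in\tilde{\D}$ and $\lie{Z}(\tilde{\D})\subset\tilde{\D}$ play no role, and you reduce everything to showing that a 1-form $\phi\in\tilde{\D}^\circ$ with $d\phi(\tilde{\D},\tilde{\D})=0$ must vanish. But your middle step contains a genuine error: the identification $\vartheta^a(\clift{X})=\theta^a(X)$, and hence $d\vartheta^a|_{\tilde{\D}\times\tilde{\D}}=d\theta^a|_{\D}$, is false in general. The forms $\vartheta^a$ are semi-basic but not basic: the frame fields $\X_a$ are required to satisfy $S(\X_a)=\Y_a$ with $\{\Y_a\}$ spanning $(\vlift{\D})^\perp$, and this orthogonal complement is taken with respect to the Hessian fibre metric $g$, which in general varies from point to point within a single fibre over $q$. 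Consequently $\tau_{*u}\X_a$ depends on $u$ and not just on $q=\tau(u)$ --- the paper warns explicitly that the $\X_i$ cannot be assumed projectable --- and all one can say is that $\vartheta^a_u=M^a_b(u)\,\tau^*\theta^b$ for some invertible $u$-dependent matrix $(M^a_b)$, because both families span the pullback of the annihilator $\D^\circ_q$. Your conclusion survives this correction, since only the span matters: the isotropy relation becomes $\phi_aM^a_b\,\theta^b([X_\alpha,X_\beta])=0$ pointwise, the 2-step hypothesis in dual form gives $\phi_aM^a_b=0$, and invertibility of $M$ gives $\phi_a=0$. So the proof is repairable, but as written the step you yourself flagged as ``the main obstacle'' fails.

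Note also that the whole excursion through the adapted coframe is unnecessary. Since $\phi\in\tilde{\D}^\circ$, for $Y_1,Y_2\in\tilde{\D}$ one has $d\phi(Y_1,Y_2)=Y_1(\phi(Y_2))-Y_2(\phi(Y_1))-\phi([Y_1,Y_2])=-\phi([Y_1,Y_2])$, so the isotropy hypothesis says directly that $\phi$ annihilates $\tilde{\D}+[\tilde{\D},\tilde{\D}]$. The paper has already established that $\tilde{\D}$ is characterized by $\tau_{|\C*}\tilde{\D}=\D$ with vertical kernel $\vlift{\D}$, from which $\tilde{\D}$ inherits 2-step maximal nonintegrability from $\D$ (brackets of projectable fields in $\tilde{\D}$ project onto brackets of fields in $\D$, and $\tilde{\D}$ already contains all verticals tangent to $\C$). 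Hence $\phi$ vanishes on all of $\vectorfields{\C}$, i.e.\ $\phi=0$, which is the paper's three-line proof. Your verification that only the $[\X_\alpha,\X_\beta]$ entries survive (the brackets $[\X_\alpha,\Y_\beta]$ and $[\Y_\alpha,\Y_\beta]$ being vertical) is correct, but it is just this invariant argument written out in the frame; staying at the level of $\phi([Y_1,Y_2])$ instead of descending to the base 2-forms $d\theta^a|_{\D}$ would have avoided the error above.
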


\begin{proof}
We have $\lie{Z}(\iota^*\omega_L)=d\phi$ for some
$\phi\in\tilde{\D}^\circ$ such
that $Y\hook d\phi\in\tilde{\D}^\circ$ for all $Y\in\tilde{\D}$, or
$d\phi(Y_1,Y_2)=0$ for all $Y_1,Y_2\in\tilde{\D}$.  But for
$Y_1,Y_2\in\tilde{\D}$, $d\phi(Y_1,Y_2)=-\phi([Y_1,Y_2])$.  If $\D$ is
2-step maximally nonintegrable, so is $\tilde{\D}$.  So $\phi$
vanishes on $\vectorfields\C$, that is, $\lie{Z}(\iota^*\omega_L)=0$.
\end{proof}

For the second result we need to strengthen the first condition
of the theorem.

\begin{prop}
If $\D$ is maximally nonintegrable then a vector field
$Z\in\tilde{\D}$ satisfying
$\tilde{\D}\hook\lie{Z}(\iota^*\omega_L)=0$,
$\lie{Z}(\tilde{\D})\subset\tilde{\D}$ and
$\lie{Z}(\iota^*\omega_L)\in d(\tilde{\D}^\circ)$ is a symmetry of
$\iota^*\omega_L$.
\end{prop}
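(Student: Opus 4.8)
The plan is to reduce everything, as in the 2-step case just treated, to showing that the potential $\phi$ for $\lie{Z}(\iota^*\omega_L)$ vanishes identically. Since $\lie{Z}(\iota^*\omega_L)\in d(\tilde{\D}^\circ)$, I would write $\lie{Z}(\iota^*\omega_L)=d\phi$ with $\phi\in\tilde{\D}^\circ$, so that the goal becomes $d\phi=0$. The strengthened first hypothesis $\tilde{\D}\hook\lie{Z}(\iota^*\omega_L)=0$ now says $Y\hook d\phi=0$ for every $Y\in\tilde{\D}$, an identity of $1$-forms rather than the mere statement that $d\phi$ annihilates pairs drawn from $\tilde{\D}$. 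This is the crucial difference from the previous proposition, and it is exactly what makes an inductive argument possible.

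The key computational step is the following. For $Y\in\tilde{\D}$ and an \emph{arbitrary} vector field $W$ on $\C$, the relation $Y\hook d\phi=0$ gives $d\phi(Y,W)=0$; expanding this and using $\phi(Y)=0$ (since $\phi\in\tilde{\D}^\circ$) yields
\[
Y(\phi(W))=\phi([Y,W]).
\]
I would then prove by induction on bracket depth that $\phi$ vanishes on the whole of $[\tilde{\D}]$. Setting $\tilde{\D}^{(1)}=\tilde{\D}$ and $\tilde{\D}^{(k+1)}=\tilde{\D}^{(k)}+[\tilde{\D},\tilde{\D}^{(k)}]$, the base case $k=1$ is immediate because $\phi\in\tilde{\D}^\circ$. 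For the inductive step, a typical new generator of $\tilde{\D}^{(k+1)}$ is $[Y,W]$ with $Y\in\tilde{\D}$ and $W\in\tilde{\D}^{(k)}$; the displayed relation together with the inductive hypothesis $\phi(W)=0$ gives $\phi([Y,W])=Y(\phi(W))=0$, and $C^\infty$-linearity of $\phi$ extends the vanishing to all of $\tilde{\D}^{(k+1)}$.

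Finally, because $\D$ --- and hence, by the remark preceding this proposition, $\tilde{\D}$ --- is maximally nonintegrable, we have $[\tilde{\D}]=\vectorfields\C$, so the induction forces $\phi$ to vanish on every vector field on $\C$, that is $\phi=0$ and $\lie{Z}(\iota^*\omega_L)=d\phi=0$. The main obstacle, and the reason the hypothesis had to be strengthened from $\tilde{\D}\hook\lie{Z}(\iota^*\omega_L)\in\tilde{\D}^\circ$ to $\tilde{\D}\hook\lie{Z}(\iota^*\omega_L)=0$, is precisely this inductive step: under the weaker condition one controls $d\phi$ only on $\tilde{\D}\times\tilde{\D}$ and can kill $\phi$ on $[\tilde{\D},\tilde{\D}]$ (the 2-step situation), but one obtains no relation of the form $Y(\phi(W))=\phi([Y,W])$ for $W$ lying outside $\tilde{\D}$, so the vanishing cannot be propagated to deeper iterated brackets. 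The stronger contraction supplies exactly that relation and allows the recursion to reach all of $[\tilde{\D}]$.
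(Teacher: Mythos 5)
Your proposal is correct in substance and turns on exactly the same identity as the paper's proof, but the induction is organized differently, and one step deserves to be made explicit. The paper considers the set $\mathcal{S}$ of 1-forms $\psi\in\tilde{\D}^\circ$ with $Y\hook d\psi=0$ (equivalently $\lie{Y}\psi=0$) for all $Y\in\tilde{\D}$, and shows that the set of vector fields $Y$ satisfying the \emph{pair} of conditions $\psi(Y)=0$ and $\lie{Y}\psi=0$ is a $C^\infty$-module closed under the Lie bracket (using $\lie{[Y_1,Y_2]}\psi=[\lie{Y_1},\lie{Y_2}]\psi$); since this set contains $\tilde{\D}$, it contains every repeated bracket \emph{of arbitrary shape} of vector fields in $\tilde{\D}$, whence $\mathcal{S}\subset[\tilde{\D}]^\circ=\{0\}$. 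Your relation $Y(\phi(W))=\phi([Y,W])$ is precisely $\lie{Y}\phi=0$ evaluated on $W$, so the computational content coincides; what differs is that you always keep $Y\in\tilde{\D}$ in the left slot and climb the filtration $\tilde{\D}^{(k+1)}=\tilde{\D}^{(k)}+[\tilde{\D},\tilde{\D}^{(k)}]$. That filtration by itself reaches only left-normed iterated brackets, whereas $[\tilde{\D}]$ is spanned by repeated brackets of all shapes (e.g.\ $[[Y_1,Y_2],[Y_3,Y_4]]$), so your final appeal to $[\tilde{\D}]=\vectorfields\C$ tacitly uses the standard fact that $\bigcup_k\tilde{\D}^{(k)}$ is itself involutive and hence equals $[\tilde{\D}]$; this follows from the Jacobi identity, $[[Y,A],B]=[Y,[A,B]]-[A,[Y,B]]$, together with the Leibniz rule for function multiples, and should be stated. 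With that one-line lemma your argument is complete, and it buys a genuine simplification: because your key relation holds for \emph{arbitrary} $W$, you never need to propagate the invariance condition $\lie{W}\phi=0$ to deeper brackets, which is exactly the extra bookkeeping the paper carries in order to handle arbitrarily shaped brackets directly against the definition of $[\tilde{\D}]$.
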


\begin{proof}
We have $\lie{Z}(\iota^*\omega_L)=d\phi$ for some
$\phi\in\tilde{\D}^\circ$ such that $Y\hook d\phi=0$ for all
$Y\in\tilde{\D}$.  We therefore consider the set $\S$ of those 1-forms
$\psi\in\tilde{\D}^\circ$ such that $Y\hook d\psi=0$ for all
$Y\in\tilde{\D}$, or equivalently $\lie{Y}\psi=0$ for all
$Y\in\tilde{\D}$.  It is an $\R$-linear subspace of
$\tilde{\D}^\circ$, and indeed a module over functions invariant under
$\tilde{\D}$.  Now if $Y$ is any vector field on $\C$ such that
$\psi(Y)=0$ and $\lie{Y}\psi=0$, and $Y'=fY$ for any function $f$ on
$\C$, then $\psi(Y')=0$, and
\[
\lie{Y'}\psi=f(Y\hook d\psi)+d(fY\hook\psi)=f\lie{Y}\psi=0.
\]
Moreover, if $Y_1,Y_2$ satisfy $\psi(Y_1)=\psi(Y_2)=0$ and
$\lie{Y_1}\psi=\lie{Y_2}\psi=0$ then
\[
\psi([Y_1,Y_2])=Y_1(\psi(Y_2))-\lie{Y_1}\psi(Y_2)=0,
\]
and
\[
\lie{[Y_1,Y_2]}\psi=[\lie{Y_1},\lie{Y_2}]\psi=0.
\]
So if $[\tilde{\D}]$ is the smallest involutive distribution
containing $\tilde{\D}$ then $\S\subset[\tilde{\D}]^\circ$.  But if
$\D$ is maximally nonintegrable then $[\tilde{\D}]=\vectorfields\C$
and so $\S=\{0\}$.  Thus $\lie{Z}(\iota^*\omega_L)=0$.
\end{proof}

\section{The nonholonomic Noether theorem}\label{nhnt}

The title of this section refers to the terminology used in the paper
\cite{Fasso} by Fass\`{o} et al.  We shall first re-express their
version of the theorem in the current framework.  Next we shall show
how it relates to the Cartan form approach.

\subsection{The theorem and the reaction-annihilator distribution}

In \cite{nonholvak} we defined (for any second-order field $\Gamma$) a
1-form $\varepsilon$ along the tangent bundle projection (restricted
to $\C$ in the nonholonomic case) by
$\varepsilon(X)=\Gamma(\vlift{X}(L))-\clift{X}(L)$ for $X$ a vector
field on $Q$.  The Lagrange-d'Alembert principle is that there is a
unique $\Gamma$ of second-order type, tangent to $\C$, such that
$\varepsilon$ annihilates $\D$.  The form $\varepsilon$ corresponding
to that particular $\Gamma$ is what Fass\`{o} et al., in \cite{Fasso},
call the reaction set $\mathcal{R}$.  The idea of Fass\`{o} et al.\ is
that there may be vector fields $Z$ on $Q$, not necessarily in $\D$,
such that $\varepsilon(Z)=0$; such a vector field belongs to the
so-called {\em reaction-annihilator distribution $\mathcal{R}^\circ$}.
The next statement is the {\em nonholonomic Noether theorem} of
\cite{Fasso2,Fasso,Fasso3}.

\begin{thm}\label{Fasso} For a vector field $Z$ on $Q$
any two of the following three conditions imply the third: (1)
$\clift{Z}(L)=0$ on $\C$; (2) $\varepsilon(Z)=0$; (3)
$\vlift{Z}(L)|_{\C}$ is a first integral of $\Gamma$.
\end{thm}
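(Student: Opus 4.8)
The plan is to recognise that the entire theorem is an immediate consequence of the single defining identity for $\varepsilon$, so that no genuine computation is required; the content resides wholly in the definitions. Recall that, as functions on $\C$,
\[
\varepsilon(Z)=\Gamma(\vlift{Z}(L))-\clift{Z}(L).
\]
First I would restate the three conditions as the vanishing of three specific functions on $\C$. Conditions (1) and (2) are already manifestly of this form: they assert respectively that $\clift{Z}(L)=0$ and $\varepsilon(Z)=0$ on $\C$. For condition (3) I would observe that, since $\Gamma$ is tangent to $\C$ (Proposition~\ref{dAlembPrinc}), applying $\Gamma$ to a function on $TQ$ and then restricting to $\C$ yields the same result as first restricting and then applying $\Gamma|_\C$; hence the assertion that $\vlift{Z}(L)|_\C$ is a first integral of $\Gamma$ is precisely the assertion that $\Gamma(\vlift{Z}(L))=0$ on $\C$.

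With the three conditions so rephrased, the defining identity exhibits the three functions $\clift{Z}(L)$, $\varepsilon(Z)$ and $\Gamma(\vlift{Z}(L))$ as linearly dependent on $\C$, since $\varepsilon(Z)$ is exactly the difference $\Gamma(\vlift{Z}(L))-\clift{Z}(L)$. Consequently, if any two of these three functions vanish identically on $\C$, the third must vanish as well. For instance, if (1) and (2) hold then $\Gamma(\vlift{Z}(L))=\varepsilon(Z)+\clift{Z}(L)=0$, which is (3); the remaining two implications are read off the same relation by solving for the other summand. This completes the argument.

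The only point demanding care --- and the sole place where the nonholonomic structure enters --- is the translation of condition (3) into $\Gamma(\vlift{Z}(L))=0$ on $\C$, which rests on the tangency of $\Gamma$ to $\C$; there is no analytic or geometric obstacle beyond this bookkeeping. It is worth noting that this transparency is exactly what makes the statement useful. In the unconstrained case $\C=TQ$ and $\Gamma=\Gamma_0$ satisfies the Euler--Lagrange equations for \emph{every} $Z$, so $\varepsilon$ vanishes identically, condition (2) becomes automatic, and the theorem collapses to the classical equivalence of (1) and (3). In the constrained case $\varepsilon$ is only guaranteed to annihilate $\D$, so condition (2) precisely measures the failure of $Z$ to lie in the reaction-annihilator distribution $\mathcal{R}^\circ$.
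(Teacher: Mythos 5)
Your proof is correct and is essentially the paper's own argument: both rest solely on the identity $\Gamma(\vlift{Z}(L))=\clift{Z}(L)+\varepsilon(Z)$ on $\C$, from which any two vanishing terms force the third. Your extra remark that tangency of $\Gamma$ to $\C$ justifies reading condition (3) as $\Gamma(\vlift{Z}(L))=0$ on $\C$ is a point the paper leaves implicit (it appears only later, in Section~5.1), and is a welcome clarification rather than a departure.
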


\begin{proof}
The proof is straightforward:\ we have
$\Gamma(\vlift{Z}(L))=\clift{Z}(L)+\varepsilon(Z)$, so if any two of
the terms vanish so does the third.
\end{proof}

We may equivalently express matters in terms of multipliers.
Let $\{ X_\alpha,X_a\}$ be a basis
of vector fields on $Q$, where the $X_\alpha$ span $\D$.  Evidently
$\varepsilon(X_\alpha) = 0$ on $\C$, while
$\varepsilon(X_a)=\lambda_a$, for some functions $\lambda_a$ on $\C$.
These $\lambda_a$ play the role of the Lagrangian multipliers one
finds in many formulations of the equations of nonholonomic dynamics.
Let $Z=Z^a X_a + Z^\alpha X_\alpha$; then $Z$ is in
$\mathcal{R}^\circ$ if and only if $Z^a\lambda_a =0$.  By definition,
$Z^a\lambda_a = \Gamma(Z^a \vlift{X_a}(L)) -\clift{(Z^a X_a)}(L)$.  If
now $\clift{Z}(L)=0$, then $\clift{(Z^aX_a)}(L)=-\clift{(Z^\alpha
X_\alpha)}(L)$.  Replacing this above, and taking into account the
fact that $\Gamma$ is such that $\clift{(Z^\alpha X_\alpha)}(L) =
\Gamma(Z^\alpha\vlift{X_\alpha}(L))$, we easily get that $Z^a\lambda_a
= \Gamma(\vlift{Z}(L))$, and the result follows.

In this general situation, the conserved momentum $\vlift{Z}(L)$ may
depend on the component of $Z$ transverse to $\D$.

We could introduce the following small improvement to the above
theorem:\ for any function $f$ on $Q$,
\[
\varepsilon(X)=\Gamma(\vlift{X}(L)-f)-(\clift{X}(L)-\dot{f});
\]
if any two of the three terms vanish so does the third.  If $L$ is of
mechanical type (meaning that it is of the form $T-V$, where $T$ is
associated to a Riemannian metric, and $V$ is a potential) this adds
nothing new, since if $\clift{X}(L)=\dot{f}$ then each side must be
zero, by equating to zero the separate powers of $u$.  Symmetries for
which $\clift{X}(L)=\dot{f}$ are therefore only of interest for more
general types of Lagrangians, e.g.\ for Lagrangians with magnetic
terms.  In that context, the Lagrangian is said to be {\em
quasi-invariant}, see e.g.\ \cite{Marmo}.

We shall now reinterpret the reaction-annihilator distribution
${\mathcal R}^\circ$.  It is not immediately clear which manifold
$\mathcal{R}^\circ$ is supposed to be a distribution on.  For each
point $(q,u)$ of $\C$, there is a subspace of $T_qQ$ consisting of
vectors $v$ such that $\varepsilon_{(q,u)}(v)=0$:\ but it will in
general depend on $u$.  For example, if the Lagrangian is of
mechanical type with a potential term $\phi$, then $\lambda_a$
contains two terms, one quadratic in velocities and one independent of
them:\ the first is what one gets from the kinetic energy term, the
second is just $X_a(\phi)$ (see the computations in Section~5.2).  So
the set $\{v\in T_qQ:v^a\lambda_a(q,u)=0\}$ will depend on $u$, as we
claimed above.

One may impose further conditions so that $\mathcal{R}^\circ$ can be
regarded as a distribution on $Q$.  Indeed, there may very well be
vector fields $Z$ on $Q$ such that $Z^a(q)\lambda_a(q,u)=0$ for all
$u\in\C_q$ (any $Z\in\D$ will do).  In the case of a Lagrangian of
mechanical type, since the $Z^a$ are functions of $q$ alone they must
in fact satisfy two conditions:\ the one coming from the quadratic
part of $\lambda_a$, and in addition $Z^aX_a(\phi)=0$.  It will become
clear immediately below that to restrict attention to vector fields on
$Q$ in this way (i.e.\ to think of $\mathcal{R}^\circ$ as a
distribution on $Q$) is to impose an unnecessary limitation, so we
shall not insist on it.

We can reinterpret $\varepsilon$ in terms of the fibre metric, much as
we did in Section~\ref{sect2}.  In fact for any vector field $X$ on
$Q$,
\[
\varepsilon(X)=\Gamma(\vlift{X}(L))-\clift{X}(L)=
(\Gamma-\Gamma_0)(\vlift{X}(L))=g(\Gamma-\Gamma_0,\vlift{X}).
\]
From this perspective it is clear that if we require
$\mathcal{R}^\circ$ to be in some sense a distribution, it must be
interpreted as the distribution of vector fields along $\iota$ which
are fibre-normal to $\Gamma-\Gamma_0$.  It will then consist of
vertical vector fields on $\C\subset TQ$, rather than vector fields on
$Q$; but it may contain vertical lifts of vector fields on $Q$, and it
is these which are of interest from the point of view of
Theorem~\ref{Fasso}.

\subsection{Relation to the Cartan form approach }

We now discuss the nonholonomic Noether theorem of Fass\`{o} et
al.\ from the Cartan form point of view.

We first make one further interpretation of $\varepsilon$. Notice that
$\Gamma\hook\omega_L+dE_L$ is a semi-basic 1-form along $\C$, say
$\epsilon$.  For any $v\in T_u(TQ)$ (where $u\in\C$),
$\epsilon_u(v)=g_{ab}(u)\vartheta^a(v)\gamma^b=g_u(S(v),\Gamma-\Gamma_0)$,
since $S(v)=\vartheta^\alpha(v)\Y_\alpha+\vartheta^a(v)\Y_a$.  That is
to say, $\epsilon_u(v)=\varepsilon_u(\tau_*v)$, or in other words
$\epsilon$ is $\varepsilon$ considered as a semi-basic 1-form along
$\C$. The element of $\tilde{\D}^\circ$ determined by
$\Gamma\hook\iota^*\omega_L+d(\iota^*E_L)$ according to
Proposition~\ref{corr} is $\iota^*\epsilon$.

Recall that for any vector field $X$ on $Q$, the vector field
$\clift{X}$ satisfies $\lie{\clift{X}}S=0$ and
$[\Delta,\clift{X}]=0$, where $\Delta$ is the Liouville field.

For any Lagrangian system (with Cartan forms $\theta_L$ and
$\omega_L=d\theta_L$)
\begin{align*}
\clift{X}\hook\omega_L&=\clift{X}\hook d\theta_L\\
&=\lie{\clift{X}}\theta_L-d(\clift{X}\hook\theta_L)\\
&=\lie{\clift{X}}(S(dL))-d(\vlift{X}(L))\\
&=S(d(\clift{X}(L)))-d(\vlift{X}(L)).
\end{align*}
This holds everywhere on $TQ$, and regardless of whether $L$ is regular.

Secondly,
\begin{align*}
\varepsilon(X)=\epsilon(\clift{X})
&=(\Gamma\hook\omega_L+dE_L)(\clift{X})\\
&=-(\clift{X}\hook\omega_L)(\Gamma)+\clift{X}(E_L)\\
&=-\Delta(\clift{X}(L))+\Gamma(\vlift{X}(L))+\clift{X}(\Delta(L))-\clift{X}(L)\\
&=\Gamma(\vlift{X}(L))-\clift{X}(L)
\end{align*}
as expected. This holds along $\C$, but neither $\clift{X}$ nor
$\vlift{X}$ need be tangent to $\C$.

Thirdly, we propose an analogue of Theorem~\ref{Fasso}:
\begin{thm}\label{newFasso}
For any vector field $Z$ tangent to $\C$ and for any function
$f$ on $\C$ such that $Z\hook\iota^*\omega_L-df\in \tilde{\D}^\circ$,
we have
\[
\Gamma(f)=Z(\iota^*E_L)-\iota^*\epsilon(Z);
\]
and if any two of the terms vanish so does the third.
\end{thm}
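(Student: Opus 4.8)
The plan is to reduce everything to two facts already in hand:\ that the dynamical field satisfies $\Gamma\in\tilde{\D}$, and that it is characterised (via Proposition~\ref{corr}) by the identity of $1$-forms on $\C$
\[
\Gamma\hook\iota^*\omega_L+d(\iota^*E_L)=\iota^*\epsilon,
\]
whose right-hand side lies in $\tilde{\D}^\circ$. The whole statement should then drop out of a single evaluation, in the same spirit as the proof of Theorem~\ref{int}. Indeed, I expect Theorem~\ref{int} to be exactly the special case $Z\in\tilde{\D}$:\ then $\iota^*\epsilon(Z)=0$ because $\iota^*\epsilon\in\tilde{\D}^\circ$, and the correction term disappears, leaving $\Gamma(f)=Z(\iota^*E_L)$.

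First I would exploit the hypothesis $Z\hook\iota^*\omega_L-df\in\tilde{\D}^\circ$. Since $\Gamma\in\tilde{\D}$, pairing this $1$-form with $\Gamma$ gives zero, so that
\[
\Gamma(f)=df(\Gamma)=(Z\hook\iota^*\omega_L)(\Gamma)=\iota^*\omega_L(Z,\Gamma).
\]
Note that this step uses only that $\Gamma$ lies in $\tilde{\D}$ and that $Z$ is tangent to $\C$ (so that $\iota^*\omega_L(Z,\cdot)$ is defined); crucially it does \emph{not} require $Z\in\tilde{\D}$, which is precisely what makes the result more general than Theorem~\ref{int}.

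Next I would feed in the characterisation of $\Gamma$. Using the antisymmetry of $\iota^*\omega_L$ and then substituting $\Gamma\hook\iota^*\omega_L=\iota^*\epsilon-d(\iota^*E_L)$,
\[
\iota^*\omega_L(Z,\Gamma)=-(\Gamma\hook\iota^*\omega_L)(Z)=-\iota^*\epsilon(Z)+Z(\iota^*E_L).
\]
Combining the two displays gives $\Gamma(f)=Z(\iota^*E_L)-\iota^*\epsilon(Z)$, which is the asserted formula. The final clause is then automatic:\ this identity is linear in the three quantities $\Gamma(f)$, $Z(\iota^*E_L)$ and $\iota^*\epsilon(Z)$, so the vanishing of any two forces the vanishing of the third.

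I do not anticipate a genuine obstacle here; the argument is two lines. The only points demanding care are the bookkeeping of signs in the antisymmetric pairing, and being scrupulous about where each object lives — in particular invoking $\Gamma\in\tilde{\D}$ (rather than the ungiven $Z\in\tilde{\D}$) to annihilate the $\tilde{\D}^\circ$ term, and recalling that $\iota^*\epsilon\in\tilde{\D}^\circ$, so that $\iota^*\epsilon(Z)$ is exactly the quantity one loses when $Z$ is permitted to leave $\tilde{\D}$.
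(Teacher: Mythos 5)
Your proof is correct and is exactly what the paper intends: its own proof consists of the single remark that the result is ``a small generalization of the proof of Theorem~\ref{int}'', and your argument spells this out faithfully, using $\Gamma\in\tilde{\D}$ to kill the $\tilde{\D}^\circ$ term, antisymmetry of $\iota^*\omega_L$, and the identity $\Gamma\hook\iota^*\omega_L+d(\iota^*E_L)=\iota^*\epsilon$ established just before the theorem. Your observation that Theorem~\ref{int} is the special case $Z\in\tilde{\D}$, where $\iota^*\epsilon(Z)=0$ because $\iota^*\epsilon\in\tilde{\D}^\circ$, matches the paper's parenthetical remark following the theorem.
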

\begin{proof}
This is a small generalization of the proof of  Theorem~\ref{int}.
\end{proof}
 (Note in passing that since
$\iota^*\epsilon(Z_f)=(\Gamma\hook\iota^*\omega_L+d(\iota^*E_L))(Z_f)$
and $\Gamma\hook\iota^*\omega_L+d(\iota^*E_L)\in\tilde{\D}^\circ$,
$\iota^*\epsilon(Z_f)=0$.  Moreover, if $Z\hook\iota^*\omega_L\in
\tilde{\D}^\circ$ then evidently $\iota^*\epsilon(Z)=Z(\iota^*E_L)$.)

We shall rederive Theorem~\ref{Fasso} from the displayed formula in
the statement of Theorem~\ref{newFasso}; that is, we shall show that
Theorem~\ref{Fasso} is a special case of our analogue theorem.
Naively, one would like to substitute $\clift{X}$ for $Z$:\ but this
is not permissible since $\clift{X}$ is not necessarily tangent to
$\C$, and in any case $\clift{X}$ does not correspond directly to
$\vlift{X}(L)$ via $\omega_L$.  Let us denote by $\clift{\bar{X}}$ the
projection of $\clift{X}$ onto $\C$ along the $\Y_a$, and let us set
\[
Z=-\clift{\bar{X}}+g^{\alpha\beta}\Y_\beta(\clift{X}(L))\Y_\alpha.
\]
Then $Z$ is tangent to $\C$ and satisfies
$Z\hook\iota^*\omega_L-d(\iota^*(\vlift{X}(L)))\in\tilde{\D}^\circ$.
To see the latter, note that $S(d(\clift{X}(L)))$ is semi-basic and
$S(d(\clift{X}(L)))(\X_\alpha)=\Y_\alpha(\clift{X}(L))$, whence
$S(d(\clift{X}(L)))-\Y_\alpha(\clift{X}(L))\vartheta^\alpha
\in\langle\vartheta^a\rangle$.  Recall the formula
\[
\omega_L=g_{\alpha\beta}\varphi^\alpha\wedge\vartheta^\beta
+g_{ab}\varphi^a\wedge\vartheta^b
+\onehalf\omega_{ab}\vartheta^a\wedge\vartheta^b.
\]
We have, on $\C$,
\begin{align*}
Z\hook\omega_L&=-\clift{X}\hook\omega_L+\Y_\alpha(\clift{X}(L))\vartheta^\alpha
\pmod{\vartheta^a}\\
&=-S(d(\clift{X}(L)))+d(\vlift{X}(L))+\Y_\alpha(\clift{X}(L))\vartheta^\alpha
\pmod{\vartheta^a}\\
&=d(\vlift{X}(L))\pmod{\vartheta^a},
\end{align*}
whence $Z\hook\iota^*\omega_L-d(\iota^*(\vlift{X}(L)))\in\tilde{\D}^\circ$.
Now since $\epsilon$ is semi-basic,
$\iota^*\epsilon(Z)=-\epsilon(\clift{X})=-\varepsilon(X)$. It remains
to calculate $Z(\iota^*E_L)$. For this purpose we require the
following general result. Let $V$ be any vertical vector field. Then
from the general formula for the fibre metric $g$
\[
g(V,\Delta)=V(\Delta(L))-\nabla^0_V\Delta(L)=V(\Delta(L))-V(L)=V(E_L),
\]
since  $\nabla^0_V\Delta=V$. Thus on $\C$
\begin{align*}
Z(E_L)&=
(-\clift{\bar{X}}+g^{\alpha\beta}\Y_\beta(\clift{X}(L))\Y_\alpha)(E_L)\\
&=(-\clift{X}+\varphi^a(\clift{X})\Y_a
+g^{\alpha\beta}\Y_\beta(\clift{X}(L))\Y_\alpha)(E_L)\\
&=-\clift{X}(E_L)+\varphi^a(\clift{X})g(\Y_a,\Delta)
+g^{\alpha\beta}\Y_\beta(\clift{X}(L))g(\Y_\alpha,\Delta).
\end{align*}
Since the constraints are linear, $\Delta$ is tangent to $\C$, so
$g(\Y_a,\Delta)=0$. If we write $\Delta=\nu^\alpha\Y_\alpha$ then
$g(\Y_\alpha,\Delta)=g_{\alpha\beta}\nu^\beta$, and
\begin{align*}
g^{\alpha\beta}\Y_\beta(\clift{X}(L))g(\Y_\alpha,\Delta)&=
g^{\alpha\beta}\Y_\beta(\clift{X}(L))g_{\alpha\gamma}\nu^\gamma\\
&=\nu^\beta\Y_\beta(\clift{X}(L))=\Delta(\clift{X}(L)).
\end{align*}
It follows that on $\C$,
$Z(E_L)=-\clift{X}(E_L)+\Delta(\clift{X}(L))=\clift{X}(L)$, and
therefore $Z(\iota^*E_L)=\iota^*(\clift{X}(L))$.  So the formula
$\Gamma(f)=Z(\iota^*E_L)-\iota^*\epsilon(Z)$ becomes
$\Gamma(\iota^*(\vlift{X}(L)))=\iota^*\clift{X}(L)+\varepsilon(X)$.
This is for a particular choice of $Z$ such that
$Z\hook\iota^*\omega_L-d(\iota^*(\vlift{X}(L)))\in\tilde{\D}^\circ$.
For any other choice, say $Z'$, we have
$(Z-Z')\hook\iota^*\omega_L\in\tilde{\D}^\circ$, so that
$Z'(\iota^*E_L)-\iota^*\epsilon(Z')=Z(\iota^*E_L)-\iota^*\epsilon(Z)$,
and the same conclusion holds.

The drawback of the approach in Theorem~\ref{newFasso}, however, is
that the correspondence between first integrals and vector fields is
no longer 1-1, as it was in Theorem~\ref{int}.

\subsection{Special cases}

A particular question of interest is whether, and under what
conditions, a complete lift $\clift{X}$ can satisfy the hypotheses of
Theorem~\ref{sym} on symmetries and first integrals. For this we require that
\begin{enumerate}
\item $\clift{X}\in\tilde{\D}$,
\item $\tilde{\D}\hook\lie{\clift{X}}(\iota^*\omega_L)\subset\tilde{\D}^\circ$,
\item $\lie{\clift{X}}(\tilde{\D})\subset\tilde{\D}$,
\item $\lie{\clift{X}}(\iota^*\omega_L)\in d(\tilde{\D}^\circ)$,
\item $\clift{X}(\iota^*E_L)=0$.
\end{enumerate}
A couple of points of notation.
\begin{itemize}
\item For a distribution $\D$, we denote by $\D^1$ its first derived
distribution, which is the
distribution spanned by $\D$ and brackets of vector fields in $\D$,
that is, $\D^1=\D+[\D,\D]$. (Thus $\D$ is 2-step maximally
nonintegrable just when $\D^1=\vectorfields Q$.)
\item We denote projectable vector fields on $TQ$, and more particularly
on $\C$, with overbars; thus $\bar{Y}$ is projectable, and we set
$\tau_*\bar{Y}=Y$.  Note that $S(\bar{Y})=\vlift{Y}$.
\end{itemize}

\begin{lem}
A vector field $X$ on $Q$ is an infinitesimal symmetry of the
distribution $\D$ (that is, it satisfies $\lie{X}(\D)\subset\D$) if and
only if $\clift{X}$ is tangent to $\C$.
\end{lem}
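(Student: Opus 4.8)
The plan is to prove both implications by working in coordinates adapted to the distribution $\D$ and examining how the complete lift $\clift{X}$ interacts with the fibre coordinates defining $\C$. Recall that $\C$ is cut out by the conditions that $u$ lie in $\D_q$; if $\{X_a\}$ completes a local basis $\{X_\alpha\}$ of $\D$ to a frame on $Q$, and $\{\omega^i\}=\{\omega^\alpha,\omega^a\}$ is the dual coframe, then $\C$ is precisely the locus where the fibre functions $\omega^a$ (regarded as functions on $TQ$ via $u\mapsto\langle\omega^a,u\rangle$) vanish. So $\clift{X}$ is tangent to $\C$ if and only if $\clift{X}(\omega^a)=0$ on $\C$ for each $a$, where here I abuse notation and write $\omega^a$ also for the associated linear function on $TQ$.

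\textbf{The computational heart.} The key step is the standard identity for the action of a complete lift on a fibre-linear function. For a vector field $X$ on $Q$ and a $1$-form $\beta$ on $Q$, the complete lift satisfies $\clift{X}(\hat\beta)=\widehat{\lie{X}\beta}$, where $\hat\beta$ denotes the fibre-linear function $u\mapsto\langle\beta,u\rangle$ on $TQ$. Applying this with $\beta=\omega^a$, I get $\clift{X}(\omega^a)=\widehat{\lie{X}\omega^a}$ as functions on $TQ$. Evaluating on $u\in\C$, i.e.\ on $u\in\D_q$, gives $\clift{X}(\omega^a)(u)=\langle\lie{X}\omega^a,u\rangle$. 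Thus $\clift{X}$ is tangent to $\C$ iff $\langle\lie{X}\omega^a,u\rangle=0$ for all $u\in\D_q$ and all $a$, i.e.\ iff $\lie{X}\omega^a$ annihilates $\D$ for every $a$.

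\textbf{Translating back to the distribution.} It remains to check that ``$\lie{X}\omega^a$ annihilates $\D$ for all $a$'' is equivalent to $\lie{X}(\D)\subset\D$. The forms $\{\omega^a\}$ span exactly the annihilator $\D^\circ$, so the condition says $\lie{X}(\D^\circ)\subset\D^\circ$. For any $1$-form $\beta\in\D^\circ$ and any $Y\in\D$, the infinitesimal Cartan formula gives $\langle\lie{X}\beta,Y\rangle=X\langle\beta,Y\rangle-\langle\beta,\lie{X}Y\rangle=-\langle\beta,\lie{X}Y\rangle$, since $\langle\beta,Y\rangle=0$. Hence $\lie{X}(\D^\circ)\subset\D^\circ$ holds iff $\langle\beta,\lie{X}Y\rangle=0$ for all $\beta\in\D^\circ$, $Y\in\D$, which is exactly $\lie{X}Y\in(\D^\circ)^\circ=\D$, i.e.\ $\lie{X}(\D)\subset\D$. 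Chaining the equivalences completes the proof.

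\textbf{Anticipated obstacle.} The only delicate point is justifying the lift identity $\clift{X}(\hat\beta)=\widehat{\lie{X}\beta}$ and being careful that the vanishing of $\clift{X}(\omega^a)$ is required only \emph{on} $\C$ (not on all of $TQ$), which is precisely what ``tangent to $\C$'' demands. Since $\C$ is the zero set of the $\omega^a$, tangency is equivalent to each $\clift{X}(\omega^a)$ lying in the ideal generated by the $\omega^b$; restricting to $\C$ then isolates exactly the condition above. This is routine once the lift formula is in hand, so I expect no serious difficulty, only the need for care in distinguishing conditions holding on $\C$ from conditions holding on $TQ$.
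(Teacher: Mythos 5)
Your proof is correct and follows essentially the same route as the paper's: the paper likewise takes the quasi-velocities $v^a$ (your fibre-linear functions $\hat\omega^a$) as defining functions of $\C$ and reduces tangency of $\clift{X}$ to the bracket condition $[X,X_\alpha]=A^\beta_\alpha X_\beta$, which is just the dual form of your condition $\lie{X}\omega^a\in\D^\circ$. You have merely spelled out, via the lift identity $\clift{X}(\hat\beta)=\widehat{\lie{X}\beta}$ and the Cartan formula, the computation the paper dismisses with ``one easily verifies.''
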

\begin{proof}
Let $(v^a,v^\alpha)$ be the quasi-velocities corresponding to the
frame $\{X_\alpha,X_a\}$, as in \cite{nonholvak}.  Then $v^a=0$ on
$\C$.  One easily verifies that $\clift{X}(v^a) = 0$ if and only if
$[X,X_\alpha]$ is of the form $A^\beta_\alpha X_\beta$.
\end{proof}

\begin{lem}
For a vector field $X$ on $Q$, if $\clift{X}\in\tilde{\D}$ then
$\lie{\clift{X}}(\tilde{\D})\subset\tilde{\D}$.
\end{lem}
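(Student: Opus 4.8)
The plan is to translate the statement into one about the annihilator codistribution $\tilde{\D}^\circ$, where the non-projectability of a frame for $\tilde{\D}$ ceases to be an obstruction. First I note that the hypothesis $\clift{X}\in\tilde{\D}$ contains in particular the information that $\clift{X}$ is tangent to $\C$; by the previous lemma this is equivalent to $X$ being an infinitesimal symmetry of the constraint distribution, $\lie{X}\D\subset\D$. (The remaining content of the hypothesis, $X\in\D$, is not needed here.) Since $\clift{X}$ is tangent to $\C$, the equivalence recorded after Proposition~\ref{presym} applies: $\lie{\clift{X}}(\tilde{\D})\subset\tilde{\D}$ if and only if $\lie{\clift{X}}(\tilde{\D}^\circ)\subset\tilde{\D}^\circ$. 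So it suffices to prove the latter inclusion.

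Next I would describe $\tilde{\D}^\circ$ concretely. Because $\tilde{\D}_u=\tau_{|\C*u}{}^{-1}(\D_{\tau(u)})$, a 1-form on $\C$ annihilates $\tilde{\D}$ precisely when it is the restriction to $\C$ of a semibasic form $\tau^*\eta$ with $\eta$ a 1-form on $Q$ annihilating $\D$; in the notation of the frame this simply records that $\vartheta^a=\iota^*\tau^*X^a$, where $\{X^a\}$ is the part of the coframe on $Q$ spanning $\D^\circ$. Thus $\tilde{\D}^\circ$ is generated over $C^\infty(\C)$ by the forms $\iota^*\tau^*\eta$, $\eta\in\D^\circ$, and it is enough to show each $\lie{\clift{X}}(\iota^*\tau^*\eta)$ lies in $\tilde{\D}^\circ$.

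The computation itself is then short, and exploits that $\clift{X}$, unlike a general section of $\tilde{\D}$, is genuinely projectable. Since $\clift{X}$ is tangent to $\C$, Lie differentiation commutes with $\iota^*$, giving $\lie{\clift{X}}(\iota^*\tau^*\eta)=\iota^*\lie{\clift{X}}(\tau^*\eta)$; and since $\clift{X}$ projects to $X$, the standard identity for basic forms gives $\lie{\clift{X}}(\tau^*\eta)=\tau^*(\lie{X}\eta)$. It remains to check $\lie{X}\eta\in\D^\circ$, which follows from the symmetry property: for any $Z\in\D$ one has $(\lie{X}\eta)(Z)=X(\eta(Z))-\eta([X,Z])=-\eta([X,Z])=0$, because $\eta(Z)=0$ and $[X,Z]\in\D$. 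Hence $\lie{\clift{X}}(\iota^*\tau^*\eta)=\iota^*\tau^*(\lie{X}\eta)\in\tilde{\D}^\circ$, which completes the argument.

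The step I expect to be the real crux is the very first reduction, namely recognizing that one should pass to $\tilde{\D}^\circ$. A direct attack on $\lie{\clift{X}}(\tilde{\D})\subset\tilde{\D}$ by testing on a frame $\{\X_\alpha,\Y_\alpha\}$ stalls on the fields $\X_\alpha$, which are explicitly \emph{not} projectable to $Q$, so that $\tau_{|\C*}[\clift{X},\X_\alpha]$ cannot be read off from a bracket-projection formula; the generators $\tau^*\eta$ of the annihilator are by contrast basic, which is exactly what lets the projectability of $\clift{X}$ do the work. (An alternative, equally clean route avoids the codistribution altogether: since $\clift{X}$ is tangent to $\C$ its flow is $T\psi_t$ for the flow $\psi_t$ of $X$, and $\tau\circ T\psi_t=\psi_t\circ\tau$ together with the flow-invariance $\psi_{t*}\D=\D$ shows $(T\psi_{-t})_*$ preserves $\tilde{\D}$; differentiating at $t=0$ yields the claim.)
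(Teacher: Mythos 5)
Your proof is correct, but it takes a genuinely different route from the paper's. You dualize: invoking the equivalence recorded after Proposition~\ref{presym}, you reduce the claim to $\lie{\clift{X}}(\tilde{\D}^\circ)\subset\tilde{\D}^\circ$, identify $\tilde{\D}^\circ$ as spanned over $C^\infty(\C)$ by the basic forms $\iota^*\tau^*\eta$ with $\eta\in\D^\circ$ (this identification, which follows from $\tilde{\D}_u=\tau_{|\C*u}{}^{-1}(\D_{\tau(u)})$, is your main structural input), and then let the two naturality identities from Section~2 do all the work --- $\lie{\xi}\circ\iota^*=\iota^*\circ\lie{\xi}$ for $\xi$ tangent to $\C$, and $\lie{\clift{X}}(\tau^*\eta)=\tau^*(\lie{X}\eta)$ since $\clift{X}$ is $\tau$-related to $X$ --- together with $\lie{X}(\D)\subset\D$ from the preceding lemma. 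The paper instead argues directly on vector fields: it notes that it suffices to test on \emph{projectable} sections of $\tilde{\D}$ (vertical lifts of sections of $\D$, which project to zero, and fields $\bar{Y}$ projecting to $Y\in\D$), and for such $\bar{Y}$ the bracket $[\clift{X},\bar{Y}]$ is tangent to $\C$, projectable, and projects to $[X,Y]\in\D$, hence lies in $\tilde{\D}$ by the same characterization $\tilde{\D}=\tau_{|\C*}{}^{-1}(\D)$. This shows your diagnosis that a direct attack ``stalls'' on the non-projectability of the $\X_\alpha$ is not quite right: the obstruction disappears as soon as one tests on a projectable generating set rather than the symplectically adapted frame, which is exactly what the paper does; your passage to the annihilator is an equally clean way of making the same two facts (projectability of $\clift{X}$ and the fibrewise description of $\tilde{\D}$) bite. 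One small dependency worth flagging: the direction of the equivalence you invoke, namely $\lie{Z}(\tilde{\D}^\circ)\subset\tilde{\D}^\circ\Rightarrow\lie{Z}(\tilde{\D})\subset\tilde{\D}$, is asserted after Proposition~\ref{presym} but only the other direction is written out there; it is of course the one-line duality computation $\phi([Z,Y])=Z(\phi(Y))-\lie{Z}\phi(Y)$ combined with the fact that the regular distribution $\tilde{\D}$ is cut out by its annihilator, so nothing is at risk. Your parenthetical flow-map alternative ($T\psi_t$ preserves $\C$, intertwines $\tau$ with $\psi_t$, and $\psi_{t*}\D=\D$) is a third valid route, and in spirit the closest to the paper's, since it too rests directly on $\tilde{\D}=\tau_{|\C*}{}^{-1}(\D)$.
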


\begin{proof}
First of all, since $\clift{X}$ is evidently tangent to $\C$,
$\lie{X}(\D)\subset\D$.  We have to show that for any vector field
$Z\in\tilde{\D}$, and for any $u\in\C$, $(\lie{\clift{X}}Z)_u$ is
tangent to $\C$ and $\tau_*(\lie{\clift{X}}Z)_u\in\D_{\tau(u)}$.  It
will be enough to consider those vector fields in $\tilde{\D}$ which
are projectable.  Let $\bar{Y}$ be any projectable vector field in
$\tilde{\D}$, so that $Y\in\D$:\ then $\lie{\clift{X}}\bar{Y}$ is
tangent to $\C$ since both $\clift{X}$ and $\bar{Y}$ are; it is
projectable, and its projection $\lie{X}Y$ belongs to $\D$.
\end{proof}

So in this case  condition 3 is superfluous.

\begin{prop}
For a vector field $X$ on $Q$, $\clift{X}$ satisfies the
conditions
\begin{enumerate}
\item $\clift{X}\in\tilde{\D}$,
\item $\tilde{\D}\hook\lie{\clift{X}}(\iota^*\omega_L)\subset\tilde{\D}^\circ$,
\item $\lie{\clift{X}}(\iota^*\omega_L)\in d(\tilde{\D}^\circ)$,
\item $\clift{X}(\iota^*E_L)=0$,
\end{enumerate}
if and only if
\begin{itemize}
\item $\lie{X}(\D)\subset\D$,
\item $X\in\D$,
\item there is a (locally defined) function $F$ on $Q$ such that
$\clift{X}(L)=\dot{F}$ on $\C$,
\item for any $Y\in\D^1$, $\vlift{Y}(\clift{X}(L))=Y(F)$ on $\C$.
\end{itemize}
\end{prop}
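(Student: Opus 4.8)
The plan is to prove the biconditional by translating the four numbered conditions on $\clift X$ into statements about $X$, $L$ and an auxiliary function, working throughout with $h=\clift X(L)$. Condition 1 is the quickest: $\clift X\in\tilde{\D}$ says exactly that $\clift X$ is tangent to $\C$ and that $\tau_*\clift X=X$ lies in $\D$. By the lemma characterizing symmetries of $\D$, tangency of $\clift X$ to $\C$ is equivalent to $\lie{X}(\D)\subset\D$, while $\tau_*\clift X=X$ gives the requirement $X\in\D$ directly; so condition 1 is equivalent to the first two bullets. Once condition 1 holds, the second lemma supplies $\lie{\clift X}(\tilde{\D})\subset\tilde{\D}$ for free, which is why that hypothesis does not reappear among conditions 1--4. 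Both directions of the proof will first establish this equivalence and then run under it.

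Next I set up the one computation everything rests on. Using $\lie{\clift X}S=0$, the closedness of $\omega_L$, and the identity $\clift X\hook\omega_L=S(d(\clift X(L)))-d(\vlift X(L))$ established above, I get $\lie{\clift X}\omega_L=d\sigma$ with $\sigma:=S(dh)$ a semi-basic $1$-form; since $\clift X$ is tangent to $\C$, $\lie{\clift X}(\iota^*\omega_L)=d(\iota^*\sigma)$, and in the frame $\iota^*\sigma=p_\alpha\vartheta^\alpha+p_a\vartheta^a$ with $p_i=\Y_i(h)|_\C$ (no $\varphi^\alpha$-term, as $\sigma$ is semi-basic). I treat condition 4 first and by itself: from $[\Delta,\clift X]=0$ one finds $\clift X(E_L)=\Delta(h)-h$, so condition 4 is equivalent to $\Delta h=h$ on $\C$. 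Because the constraints are linear, $\Delta$ is tangent to $\C$ and restricts to the Euler field of each linear fibre $\C_q=\D_q$; hence $\Delta h=h$ on $\C$ says precisely that $h|_\C$ is fibre-linear, $h|_\C=\langle\beta,\,\cdot\,\rangle$ for a $1$-form $\beta$ on $Q$ (determined modulo $\D^\circ$), and then $p_\alpha=\Y_\alpha(h)|_\C=\beta(X_\alpha)$ is basic.

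I then read conditions 3 and 2 off $\iota^*\sigma$. Condition 3 asserts $d(\iota^*\sigma)=d\phi$ for some $\phi\in\tilde{\D}^\circ=\langle\vartheta^a\rangle$, i.e.\ $\iota^*\sigma-\phi=dG$ locally; comparing coframe components forces $\Y_\alpha G=0$ and $\X_\alpha G=p_\alpha$ (the $\vartheta^a$-components only fix $\phi$). Since $\langle\Y_\alpha\rangle=\vlift{\D}$ is the tangent distribution to the connected fibres $\C_q$, $\Y_\alpha G=0$ makes $G$ basic, $G=\tau^*F$, whence $p_\alpha=\X_\alpha(\tau^*F)=X_\alpha(F)$. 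Combining with the fibre-linearity from condition 4 gives $\beta(X_\alpha)=X_\alpha(F)$, that is $\beta\equiv dF\pmod{\D^\circ}$; evaluating on $u\in\D_q$ yields $h|_\C=\dot F$, which is bullet (iii). The converse is cleaner: from bullet (iii), $S(d\dot F)=\tau^*dF$ is basic, so with $r:=h-\dot F$ (vanishing on $\C$) one gets $\iota^*\sigma=d\bar F+\rho_a\vartheta^a$ where $\bar F=\tau^*F$ and $\rho_a=\Y_a(r)|_\C$, so $\lie{\clift X}(\iota^*\omega_L)=d(\rho_a\vartheta^a)\in d(\tilde{\D}^\circ)$ gives condition 3, while homogeneity of $\dot F$ with $\Delta$ tangent to $\C$ gives condition 4.

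For condition 2, with bullet (iii) in force I have $\lie{\clift X}(\iota^*\omega_L)=d(\rho_a\vartheta^a)$, and since $\rho_a\vartheta^a$ already annihilates $\tilde{\D}$, condition 2 collapses to $\rho_a\vartheta^a([\tilde{\D},\tilde{\D}])=0$, i.e.\ $\rho_a\vartheta^a$ annihilates $\tilde{\D}^1=\tilde{\D}+[\tilde{\D},\tilde{\D}]$. The bridge to bullet (iv) is the pointwise identity $S\xi=\vlift{(\tau_*\xi)}$, which with $\Y_a=S(\X_a)=\vlift{(\tau_*\X_a)}$ gives, for $Y\in\D^1$ lifted to $W\in\tilde{\D}^1$ over the same point, $\varphi^a(\vlift Y)=\vartheta^a(W)$ (both are the component transverse to $\D$); since $dr=\rho_a\varphi^a$ on $\C$, this reads $\vlift Y(r)|_\C=\rho_a\vartheta^a(W)$. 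As $\tilde{\D}^1$ projects onto $\D^1$ and contains $\vlift{\D}$ (on which $\vartheta^a$ vanishes), condition 2 is equivalent to $\vlift Y(r)|_\C=0$ for all $Y\in\D^1$, which via $\vlift Y(\dot F)=Y(F)$ is exactly bullet (iv) (the case $Y\in\D$ being automatic, as then $\vlift Y$ is tangent to $\C$ and $r|_\C=0$). The main obstacle is the forward derivation of bullet (iii): conditions 3 and 4 each capture only part of $h|_\C$ --- condition 3 the $\vlift{\D}$-directional derivatives $p_\alpha=X_\alpha(F)$, condition 4 the fibre-linearity --- and one must see that together they recover $h|_\C=\dot F$ exactly; the secondary delicacy is the frame bookkeeping in the last step, matching the $\X_a$-components of brackets in $[\tilde{\D},\tilde{\D}]$ with the $(\vlift{\D})^\perp$-components of $\vlift Y$, where the identity $\Y_a=S(\X_a)=\vlift{(\tau_*\X_a)}$ does the work.
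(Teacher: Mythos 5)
Your proposal is correct and is essentially the paper's own proof in different clothing: both rest on the identity $\lie{\clift{X}}\theta_L=S(d(\clift{X}(L)))$ (your $\lie{\clift{X}}(\iota^*\omega_L)=d(\iota^*\sigma)$ with $\sigma=S(dh)$ is just its exterior derivative), both extract a basic function $F$ from condition 3 by noting that the relevant 1-form kills vertical fields tangent to $\C$, both use tangency of $\Delta$ to $\C$ (linearity of the constraints) to upgrade condition 4 to $\clift{X}(L)=\dot{F}$ on $\C$, and both identify condition 2 with the $\D^1$-condition by evaluating the exterior derivative on brackets. Your two local variants --- recasting condition 4 as fibre-linearity of $h|_\C$ where the paper simply writes $\Delta=v^\alpha\vlift{X_\alpha}$ on $\C$ in quasi-velocities, and reducing condition 2 to $\rho_a\vartheta^a$ annihilating $\tilde{\D}^1$ via the bridge $\vartheta^a(W)=\varphi^a(S(W))$ where the paper computes $d\lie{\clift{X}}(\iota^*\theta_L)(\bar{Y}_1,\bar{Y}_2)$ directly on projectable fields --- are both sound and change nothing essential.
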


(Notice that if $Y\in\D$, so that $\vlift{Y}$ is tangent to $\C$,
$\vlift{Y}(\clift{X}(L))=\vlift{Y}(\dot{F})=Y(F)$ on $\C$, so the
final condition is automatically satisfied.  So that condition is
really concerned with derivatives of $\clift{X}(L)$ in directions
transverse to $\C$; that is, it says something about how
$\clift{X}(L)$ changes as one moves off $\C$.  Moreover, the
transverse directions involved are those that arise from bracketing
vector fields in $\D$.)

\begin{proof}
As a preliminary step we evaluate
$\lie{\clift{X}}(\iota^*\theta_L)$.  Firstly,
$\lie{\clift{X}}\theta_L=S(d(\clift{X}(L)))$, whence
$\lie{\clift{X}}\theta_L(\bar{Y})=\vlift{Y}(\clift{X}(L))$ and
$\lie{\clift{X}}\theta_L$ vanishes on vertical vector fields (both of
these assertions holding everywhere on $TQ$).  Thus
$\lie{\clift{X}}(\iota^*\theta_L)$ vanishes on any vertical vector
field tangent to $\C$, while for any $\bar{Y}$ tangent to $\C$,
$\lie{\clift{X}}(\iota^*\theta_L)(\bar{Y})=\iota^*(\vlift{Y}(\clift{X}(L)))$.

Now suppose that $\lie{X}(\D)\subset\D$, $X\in\D$, and on $\C$,
$\clift{X}(L)=\dot{F}$ and $\vlift{Y}(\clift{X}(L))=Y(F)$ for any
$Y\in\D^1$.  We show that the numbered conditions are satisfied.

Since $\lie{X}(\D)\subset\D$, $\clift{X}$ is tangent to $\C$.  Furthermore,
$\tau|_{\C*}\clift{X}=X\in\D$, so $\clift{X}\in\tilde{\D}$, which
establishes that condition 1 is satisfied.
For $\bar{Y}\in\tilde{\D}$, so that $\vlift{Y}$ is tangent to $\C$, we
have
\[
\lie{\clift{X}}(\iota^*\theta_L)(\bar{Y})=\vlift{Y}(\iota^*(\clift{X}(L)))
=\vlift{Y}(\dot{F})=Y(F)
\]
(strictly speaking, $\tau|_\C^*(Y(F))$).  It follows that
$\lie{\clift{X}}(\iota^*\theta_L)-dF\in\tilde{\D}^\circ$ (since both
terms vanish on vertical vector fields).  Thus
$\lie{\clift{X}}(\iota^*\omega_L)\in d(\tilde{\D}^\circ)$, and
condition 3 is satisfied.  We next consider condition 2.  It can be
written $d\lie{\clift{X}}(\iota^*\theta_L)(\tilde{\D},\tilde{\D})=0$.
Using the usual formula for the exterior derivative it is easy to see
that if either or both of the arguments is vertical then one gets
zero.  For $\bar{Y}_1,\bar{Y}_2\in\tilde{\D}$,
\begin{align*}
d\lie{\clift{X}}(\iota^*\theta_L)(\bar{Y}_1,\bar{Y}_2)
&=\bar{Y}_1(\lie{\clift{X}}(\iota^*\theta_L)(\bar{Y}_2)
-\bar{Y}_2(\lie{\clift{X}}(\iota^*\theta_L)(\bar{Y}_1)\\
&\qquad-\lie{\clift{X}}(\iota^*\theta_L)([\bar{Y}_1,\bar{Y}_2])\\
&=\bar{Y}_1(Y_2(F))-\bar{Y}_2(Y_1(F))
-\lie{\clift{X}}(\iota^*\theta_L)([\bar{Y}_1,\bar{Y}_2])\\
&=[Y_1,Y_2](F)-\iota^*(\vlift{[Y_1,Y_2]}(\clift{X}(L)))\\
&=0
\end{align*}
because $\vlift{Y}(\clift{X}(L))=Y(F)$ on $\C$ for
any $Y\in\D^1$.
Finally, condition 4 follows directly from the fact that
$\Delta(\dot{F})=\dot{F}$.

For the converse we shall make use of a frame $\{X_i\}$ on $Q$ with
$\{X_\alpha\}$ a local basis for $\D$, as usual.  We take vector
fields $\bar{X}_i$ on (and tangent to) $\C$ projecting onto the
$X_i$:\ they could be the fibre-orthogonal projections onto $\C$ of
the $\clift{X_i}$, for example.  (These are not to be confused with
the $\X_\alpha$ etc., which are not necessarily projectable.)  Then
$S(\bar{X}_\alpha)=\vlift{X_\alpha}$, and
$\{\bar{X}_\alpha,\vlift{X}_\alpha\}$ is a basis for $\tilde{\D}$.
We denote by $v^i$ the corresponding quasi-coodinates; $v^a=0$ on $\C$.

Suppose that $\clift{X}$ satisfies the numbered conditions.
It follows from condition 1, firstly that $\clift{X}$ is tangent to
$\C$ and so $\lie{X}(\D)\subset\D$, and secondly that
$X=\tau_{|\C*}\clift{X}\in\D$.
Condition 3 implies the existence (locally) of a function $F$ on $\C$
such that $\lie{\clift{X}}(\iota^*\theta_L)-dF\in\tilde{\D}^\circ$.
Since $\lie{\clift{X}}(\iota^*\theta_L)$ vanishes on any vertical
vector field $V$ tangent to $\C$, and all such vector fields belong to
$\tilde{\D}$, it follows that $V(F)=0$ for all such $V$, so $F$ is
(the pull-back of) a function on $Q$.  Then
\[
X_\alpha(F)=\lie{\clift{X}}(\iota^*\theta_L)(\bar{X}_\alpha)=
\vlift{X_\alpha}(\clift{X}(L)),
\]
whence by condition 4
\[
\clift{X}(L)=\Delta(\clift{X}(L))=v^\alpha\vlift{X_\alpha}(\clift{X}(L))
=v^\alpha X_\alpha(F)=\dot{F}
\]
on $\C$. From the calculation of
$d\lie{\clift{X}}(\iota^*\theta_L)(\bar{Y}_1,\bar{Y}_2)$ above we see
that on $\C$, for any $Y\in\D^1$, $\vlift{Y}(\clift{X}(L))=Y(F)$.
\end{proof}

\begin{cor}
The numbered conditions are satisfied if $X$ is a {\em horizontal
quasi-symmetry} of the system, that is, if $\lie{X}(\D)\subset\D$,
$X\in\D$, and for some function $F$ on $Q$, $\clift{X}(L)=\dot{F}$
holds on $TQ$.  In fact $\lie{\clift{X}}(\iota^*\omega_L)=0$ in this
case.
\end{cor}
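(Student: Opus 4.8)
The plan is to establish the stronger ``in fact'' statement first, since once $\lie{\clift{X}}(\iota^*\omega_L)$ is known to vanish, conditions 2 and 3 become trivial and only conditions 1 and 4 remain to be checked directly. The whole point is that a horizontal quasi-symmetry differs from the hypotheses characterised in the preceding proposition only in demanding $\clift{X}(L)=\dot{F}$ on all of $TQ$ rather than merely on $\C$; this global invariance is precisely what promotes the membership $\lie{\clift{X}}(\iota^*\omega_L)\in d(\tilde{\D}^\circ)$ to outright vanishing. (Alternatively one could simply verify the four bullet conditions of the previous proposition, the fourth being automatic since $\vlift{Y}(\dot{F})=Y(F)$ for \emph{every} $Y$; but that route would still leave the ``in fact'' clause to prove.)

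First I would invoke the preliminary identity from the proof of the previous proposition, $\lie{\clift{X}}\theta_L=S(d(\clift{X}(L)))$, which holds everywhere on $TQ$. Substituting the hypothesis $\clift{X}(L)=\dot{F}$ gives $\lie{\clift{X}}\theta_L=S(d\dot{F})$, and the key computation is that for a function $F$ on $Q$ one has $S(d\dot{F})=dF$, the semibasic pullback of $dF$ to $TQ$ (evaluating $S(d\dot{F})$ on a vector $\xi$ returns $S(\xi)(\dot{F})=\vlift{(\tau_*\xi)}(\dot{F})=(\tau_*\xi)(F)$). Taking the exterior derivative and using that $\lie{\clift{X}}$ commutes with $d$ then yields $\lie{\clift{X}}\omega_L=d(\lie{\clift{X}}\theta_L)=d(dF)=0$ on the whole of $TQ$. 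Since $\lie{X}(\D)\subset\D$, the earlier lemma makes $\clift{X}$ tangent to $\C$, so the relation $\lie{\xi}(\iota^*\chi)=\iota^*(\lie{\xi}\chi)$ of Section~\ref{sect2} gives $\lie{\clift{X}}(\iota^*\omega_L)=\iota^*(\lie{\clift{X}}\omega_L)=0$, which is the final assertion; conditions 2 and 3 follow at once.

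It remains to verify conditions 1 and 4. Condition 1 is immediate: tangency of $\clift{X}$ to $\C$ together with $\tau_{|\C*}\clift{X}=X\in\D$ puts $\clift{X}$ in $\tilde{\D}$. For condition 4 I would compute $\clift{X}(E_L)$ using $E_L=\Delta(L)-L$, the commutation $[\Delta,\clift{X}]=0$, and Euler's identity $\Delta(\dot{F})=\dot{F}$ (valid because $\dot{F}$ is fibrewise homogeneous of degree one), obtaining $\clift{X}(E_L)=\Delta(\clift{X}(L))-\clift{X}(L)=\Delta(\dot{F})-\dot{F}=0$ on $TQ$; pulling back along $\iota$ gives $\clift{X}(\iota^*E_L)=0$.

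I expect no serious obstacle, since the argument is a short computation once the machinery of the previous proposition is in place. The only point requiring care is conceptual rather than technical: one must keep track of where the global hypothesis is used. It is precisely because $\clift{X}(L)=\dot{F}$ is assumed on all of $TQ$ that $\lie{\clift{X}}\theta_L$ equals the closed form $dF$ identically, so that its exterior derivative vanishes everywhere rather than only modulo $\tilde{\D}^\circ$; the weaker on-$\C$ hypothesis of the proposition would give only the latter.
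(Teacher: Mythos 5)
Your proof is correct and is in essence the paper's own argument: the paper's two-line proof likewise observes that the global hypothesis gives $\vlift{Y}(\clift{X}(L))=Y(F)$ for \emph{every} vector field $Y$ on $Q$, so that $\lie{\clift{X}}(\iota^*\theta_L)=dF$ exactly rather than merely modulo $\tilde{\D}^\circ$ (your computation $S(d\dot{F})=\tau^*dF$ is the same fact in different notation), whence $\lie{\clift{X}}(\iota^*\omega_L)=d(dF)=0$. The only difference is organizational: the paper obtains the numbered conditions by citing the bullet conditions of the preceding proposition, while you derive conditions 2 and 3 from the vanishing and verify 1 and 4 directly --- a cosmetic rearrangement, not a genuinely different route.
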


\begin{proof}
We have $\vlift{Y}(\clift{X}(L))=Y(F)$ for any vector field $Y$
on $Q$. It follows that $\lie{\clift{X}}(\iota^*\theta_L)=dF$.
\end{proof}

We may conclude, from Theorem~\ref{sym}, that if $X$ is a horizontal
quasi-symmetry then $\clift{X}$ is a symmetry of $\iota^*\omega_L$ and
of $\Gamma$, and there is, at least locally, a function $f$ on $\C$
such that $\clift{X}=Z_f$ and $\Gamma(f)=0$.  This result, with
$f=F-\iota^*(\vlift{X}(L))$ of course, is well-known; the point of the
exercise was to see how it is related to the theorem.  To turn things
around, we may say that the vector fields $Z$ satisfying the
hypotheses of Theorem~\ref{sym} should be regarded as generalizations
of horizontal quasi-symmetries.

\section{Applications}

The bulk of the literature concentrates on the case where the
Lagrangian is of mechanical type.  So we now explain how the theory
works in that special case.  A simple subcase, but one which contains
several instructive pointers to what happens in general, is furnished
by submanifolds in Riemannian geometry.

\subsection{Conservation laws in Riemannian geometry}\label{Rie}

First, some elementary remarks about geodesic conservation laws in
Riemannian geometry.

Let
$(M,g)$ be a Riemannian manifold, with Levi-Civita connection
$\nabla$.  For any vector field $Z$ on $M$, define a type $(0,2)$
tensor $K_Z$ by $K_Z(u,v)=g(\nabla_uZ,v)$.  The necessary and
sufficient condition for $Z$ to be a Killing field (infinitesimal
isometry) is that $K_Z$ is skew-symmetric; in fact
$\lie{Z}g(u,v)=K_Z(u,v)+K_Z(v,u)$.

For any curve $c$ in $M$, and any vector field $Z$,
\[
\frac{d}{dt}(g(Z,\dot{c}))=g(\nabla_{\dot{c}}Z,\dot{c})
+g(Z,\nabla_{\dot{c}}\dot{c})
=K_Z(\dot{c},\dot{c})+g(Z,\nabla_{\dot{c}}\dot{c}).
\]
If $c$ is an affinely parametrized geodesic, so that
$\nabla_{\dot{c}}\dot{c}=0$, and $K_Z(\dot{c},\dot{c})=0$, then
$g(Z,\dot{c})$ is constant along $c$. If $Z$ is a Killing field
then $g(Z,\dot{c})$ is constant along every geodesic; and
conversely (since there is a geodesic in every direction, so
$\dot{c}$ is an arbitrary vector). This, in fact, is Noether's
theorem in Riemannian geometry:\ there is a 1-1 correspondence
between geodesic invariants of the form $g(Z,\dot{c})$ and
infinitesimal symmetries, that is, isometries or Killing fields.

Now consider the case of a Riemannian submanifold $N$ of a Riemannian
manifold $(M,g)$ (the metric on $N$ is the restriction of $g$). The
second fundamental form $\Pi$ is a type $(0,2)$ tensor on $N$ with
values in the normal bundle, defined as follows. For any vector
fields $\eta$, $\zeta$ on $N$, set
$\Pi(\eta,\zeta)=\nabla_\eta\zeta^\perp$ (the normal component of
$\nabla_\eta\zeta$). For any function $f$ on $N$ we have
$\nabla_\eta f\zeta=f\nabla_\eta\zeta+\eta(f)\zeta$, and so
$\Pi(\eta,f\zeta)=f\Pi(\eta,\zeta)$, and $\Pi$ is tensorial.
Moreover, $\nabla_\eta\zeta-\nabla_\zeta\eta=[\eta,\zeta]$ and the
latter is tangent to $N$, which implies that $\Pi$ is symmetric in
its arguments.

A curve $c$ on $N$ is geodesic with respect to the induced metric if
and only if $\nabla_{\dot{c}}\dot{c}$ is normal to $N$ (here $\nabla$
is the Levi-Civita connection of $(M,g)$). If $c$ is geodesic then
$\nabla_{\dot{c}}\dot{c}=\Pi(\dot{c},\dot{c})$.

Now consider a vector field $Z$ defined in a neighbourhood of $N$
in $M$. Then for a geodesic $c$ in $N$,
\[
\frac{d}{dt}(g(Z,\dot{c}))
=K_Z(\dot{c},\dot{c})+g(Z,\nabla_{\dot{c}}\dot{c})
=K_Z(\dot{c},\dot{c})+g(Z,\Pi(\dot{c},\dot{c})).
\]
Of course if $Z$ is tangent to $N$ the last term vanishes and the
situation reduces to the one discussed earlier.  But suppose that
$Z$ is not tangent to $N$, but satisfies the following two
conditions:\ the restriction of $K_Z$ to $TN$ is skew, and $Z$ is
orthogonal to the second fundamental form of $N$.  (Of course the
codimension of $N$ must be greater than 1 for this to be possible for
$Z$ not tangent to $N$.)  Then $g(Z,\dot{c})$ is constant along
every geodesic of $N$.  In fact any two of the following conditions
implies the third:
\begin{enumerate}
\item $Z$ is orthogonal to the second fundamental form of $N$;
\item the restriction of $K_Z$ to $TN$ is skew;
\item $g(Z,\dot{c})$ is constant along every geodesic of $N$.
\end{enumerate}
The first and last of these depend only on the values of $Z$ on $N$.
Recall that $K_Z(u,v)=g(\nabla_uZ,v)$, so that $K_Z$ involves
derivatives of $Z$:\ but since we are interested only in the case
where $u$ and $v$ are tangent to $N$, we have to take derivatives only
in directions tangent to $N$, so that $K_Z(u,v)$ also depends only on
the values of $Z$ on $N$.  In other words, one could state the result
as follows:\ let $Z$ be a vector field on $N$ (but not necessarily
tangent to it --- that is, strictly speaking $Z$ is a vector field
along the injection of $N$ into $M$); then any two of the conditions
above imply the third.

Although $Z$ need not be tangent to $N$, the conserved quantity
$g(Z,\dot{c})$ depends only on its component tangent to $N$.

This result is of course the nonholonomic Noether theorem of
\cite{Fasso}, for a kinetic energy Lagrangian, in the case where the
constraints are actually holonomic!  Let us consider the
result in this light.  The equivalence between the condition
$\clift{Z}(L)=0$ on $\C$ and item 2 above is dealt with below.  The
function $\vlift{Z}(L)$ on $\C$ is more-or-less $g(Z,\dot{c})$ (the
latter is the former restricted to a base integral curve of $\Gamma$);
note that since $\Gamma$ is tangent to $\C$, in computing
$\Gamma(\vlift{Z}(L))$ we can restrict $\vlift{Z}(L)$ to $\C$, i.e.\ we
can set $v^a=0$ before acting with $\Gamma$.  (The conserved quantity
depends only on the component of $Z$ along the constraint
distribution, just as we pointed out above for the submanifold case.)
The new ingredient is the identification of the condition
$\varepsilon(Z)=0$ in terms of the second fundamental form.  In fact
the definition of the second fundamental form given above extends in a
fairly obvious way to a distribution $\D$ (assuming of course one has
a metric):\ for any vector fields $X$, $Y$ in $\D$ set
\[
\Pi(X,Y)=
\onehalf\left(\nabla_XY+\nabla_YX\right)^\perp
\]
where $Z^\perp$ is the component of $Z$ perpendicular to $\D$.
Note that symmetry is no longer automatic --- since we are now
dealing with a nonholonomic distribution we won't have
$[\eta,\zeta]\in\D$. The condition $\varepsilon(Z)=0$ is just
$Z\perp\Pi$.

\subsection{Lagrangians of mechanical type:\ the general 
case}

We shall derive expressions for $\varepsilon$ for a Lagrangian of
mechanical type,
\[
L(q,u)=\onehalf g_q(u,u)-\phi(q).
\]
The fibre metric essentially coincides with the metric $g$
on $Q$, at least so far as vertical lifts are concerned.

First of all, $\vlift{X}(L)=g(X,u)$, while
$\clift{X}(L)=g(\nabla_uX,u)-X(\phi)$.  Let us take a frame
$\{X_\alpha,X_a\}$ in which the $X_\alpha$ span $\D$ and the $X_a$ are
normal to $\D$, so that $g_{a\alpha}=0$.  As before we
denote the corresponding quasi-velocities by $(v^\alpha,v^a)$, and we
denote by $R_{\alpha\beta}^\gamma X_\gamma$ the component of
$[X_\alpha,X_\beta]$ in $\D$.  We write
$\Gamma=v^\alpha\clift{X_\alpha}+\Gamma^\alpha\vlift{X_\alpha}$.
Recall that $\Gamma$ is tangent to $\C$, and that $v^a=0$ on $\C$, so
that $u=v^\alpha X_\alpha$ on $\C$, and (for example)
$g(X_\alpha,u)=g_{\alpha\beta}v^\beta$.  Moreover,
$v^\beta\clift{X_\beta}(v^\gamma)=-R^\gamma_{\beta\delta}v^\beta
v^\delta=0$ on $\C$ (by skew-symmetry).  It follows that
\begin{align*}
\varepsilon(X_\alpha)&=
(v^\beta\clift{X_\beta}+\Gamma^\beta\vlift{X_\beta})(g(X_\alpha,u))
-g(\nabla_uX_\alpha,u)+X_\alpha(\phi)\\
&=g_{\alpha\beta}\Gamma^\beta+X_\beta(g_{\alpha\gamma})v^\beta v^\gamma
-g(\nabla_{X_\beta}X_\alpha,X_\gamma)v^\beta v^\gamma+X_\alpha(\phi)\\
&=g_{\alpha\beta}\Gamma^\beta
+g(X_\alpha,\nabla_{X_\beta}X_\gamma)v^\beta v^\gamma+X_\alpha(\phi),
\end{align*}
from which we can determine $\Gamma$. The calculation of
$\varepsilon(X_a)$ is much simplified by the choice of $X_a$, since
$g(X_a,u)=g(X_a,v^\alpha X_\alpha)=0$ on $\C$:\ thus
\[
\varepsilon(X_a)=
-g(\nabla_{X_\alpha}X_a,X_\beta)v^\alpha v^\beta+X_a(\phi)
=g(X_a,\nabla_{X_\alpha}X_\beta)v^\alpha v^\beta+X_a(\phi).
\]
In the first term in the final expression, only the component of
$\nabla_{X_\alpha}X_\beta$ normal to $\D$ matters.  Let us write
$\Pi_{\alpha\beta}=\Pi^a_{\alpha\beta}X_a$ for the symmetric part of the
normal component of $\nabla_{X_\alpha}X_\beta$ (i.e.\ the generalized
second fundamental form).  Then
\[
\varepsilon(X_a)=g_{ab}\Pi^b_{\alpha\beta}v^\alpha v^\beta +X_a(\phi),
\]
or more generally, for any vector field $Y$ on $Q$ normal to $\D$,
\[
\varepsilon(Y)=g(Y,\Pi(u,u))+Y(\phi).
\]
Since $\varepsilon$ annihilates $\D$ and $\Pi$ is normal to it, we
conclude that a vector field $Z$ on $Q$ satisfies $\varepsilon(Z)=0$
if and only if $g(Z,\Pi_{\alpha\beta})=0$ and $Z^\perp(\phi)=0$ where
$Z^\perp$ is the component of $Z$ normal to $\D$.  These are the
conditions we alluded to in the discussion on the reaction-annihilator
distribution of Section~4.1.

If $g$ is actually a flat metric (as is the case in many examples)
then the second fundamental form condition is vacuous.

The conditions for $Z$ to generate a conserved momentum are that
\begin{align*}
&g(Z,\Pi_{\alpha\beta})=0,\quad
Z^\perp(\phi)=0,\\
&g(\nabla_{X_\alpha}Z,X_\beta)+g(\nabla_{X_\beta}Z,X_\alpha)=0,\quad
Z(\phi)=0.
\end{align*}
Regarding the
potential $\phi$, note that in fact it isn't enough that $Z(\phi)=0$:\
in effect, both the component of $Z$ along $\D$ and the component
normal to $\D$ separately have to annihilate $\phi$.  The condition
$g(\nabla_{X_\alpha}Z,X_\beta)+g(\nabla_{X_\beta}Z,X_\alpha)=0$ just
says that $K_Z$, restricted to $\D$, is skew-symmetric.  This result
may also be found in e.g.\ \cite{IlSem}.

Finally, we make the link to the Riemannian case, as described in
Section~\ref{Rie}. The Euler-Lagrange field $\Gamma_0$ is the
geodesic field.  Consider a curve $c$ in $Q$, and its natural lift
$C=(c,\dot{c})$ to $TQ$.  It is easy to see (by a coordinate
calculation for example) that the tangent field to the natural lift
can be expressed as follows:
\[
\dot{C}=\Gamma_0|_C+\vlift{(\nabla_{\dot{c}}\dot{c})}.
\]
For a constrained system of this type, therefore, $c$ will be a base
integral curve of the constrained dynamical field $\Gamma$ if and only
if $\nabla_{\dot{c}}\dot{c}$ is normal to $\D|_c$.  This includes the
case in which $\D$ is integrable, giving the rule for geodesics in a
submanifold.  In fact for a base integral curve $c$ of $\Gamma$ we
have $(\Gamma-\Gamma_0)_C
=\vlift{(\nabla_{\dot{c}}\dot{c})}=\vlift{\Pi(\dot{c},\dot{c})}$. So
the formula
\[
\frac{d}{dt}(g(Z,\dot{c}))
=K_Z(\dot{c},\dot{c})+g(Z,\Pi(\dot{c},\dot{c}))
\]
(see above) is the formula
\[
\Gamma(\vlift{Z}(L))=\clift{Z}(L)+g(\Gamma-\Gamma_0,\vlift{Z})
\]
for this case (along the natural lift $C$ of $c$).

\subsection{Quadratic first integrals for constrained systems of mechanical
type}

In the previous example we have used the conditions of the
nonholonomic Noether theorem as discussed in Section~\ref{nhnt}.  In
this section we give an example where we use the more general results
of the section on the Cartan form approach.  More in particular, we
shall apply the conditions of Theorem~\ref{int}.

First, a remark about connections.  Let $E\to M$ be a vector bundle
with linear connection, with covariant derivative operator $\nabla$
and horizontal lift $X\mapsto \hlift{X}$.  Any section $\sigma$ of
$E\to M$ determines a vertical vector field on $E$ by the
identification of a point of a vector space with a (constant) vector
field on it, ie by a variant of the vertical lift construction:\ call
it therefore $\vlift{\sigma}$.  Then
$[\hlift{X},\vlift{\sigma}]=\vlift{(\nabla_X\sigma)}$.

Now let $F\to M$ be a vector sub-bundle of $E$, and suppose that $E$ is
equipped with a constant fibre metric $g$.  For any $v\in F$, let
$\bar{H}_v$ be the fibre-orthogonal projection of the horizontal
subspace $H_v$ of $T_vE$ into $T_vF$, and $X\mapsto\hbarlift{X}$ the
corresponding horizontal lift to $F$.  Then for any section $\tau$ of
$F\to M$, $[\hbarlift{X},\vlift{\tau}]$ is a vertical lift, and if we
set $[\hbarlift{X},\vlift{\tau}]=\vlift{(\bar{\nabla}_X\tau)}$ then
$\bar{\nabla}$ is the covariant derivative operator of a linear
connection on $F$.

For $x\in M$, let $\{e_i(x)\}$ be a basis for $E_x$ such that
$\{e_\alpha(x)\}$ is a basis for $F_x$ and $\{e_a(x)\}$ a basis for
its orthogonal complement. Suppose that we have, locally, such a choice of
bases depending smoothly on $x$; in other words, we have local
sections $e_i$ with these properties. Let $\{X_r\}$ be a local basis
of vector fields on $M$, and set $\nabla_{X_r}e_i=\conn jri e_j$; the
$\conn jri$ are the connection coefficients of $\nabla$ with respect
to the chosen bases. Let $(u^i)$ be the fibre coordinates on $E$
corresponding to the basis of sections $\{e_i\}$, so that
$\vlift{e_i}(u^j)=\delta^j_i$; note that $F$ is the submanifold
$u^a=0$. Then
\begin{align*}
[\hlift{X_r},\vlift{e_i}](u^j)&=
\hlift{X_r}(\delta^j_i)-\vlift{e_i}\big(\hlift{X_r}(u^j)\big)\\
&=\vlift{(\nabla_{X_r}e_i)}(u^j)=\conn jri,
\end{align*}
so that
\[
\hlift{X_r}(u^j)=-\conn jri u^i,
\]
as one would expect. Now $\hlift{X_r}-\hbarlift{X_r}\in\langle
e_a\rangle$, say $\hlift{X_r}-\hbarlift{X_r}=\xi^a_r\vlift{e_a}$.
Thus on $F$ (where $u^a=0$) $\xi^a_r=-\conn ar\alpha u^\alpha$, using
the fact that $\hbarlift{X_r}(u^a)=0$ since $\hbarlift{X_r}$ is
tangent to the submanifold $F$. It follows that
$\hbarlift{X_r}=\hlift{X_r}+\conn ar\alpha u^\alpha\vlift{e_a}$. So
finally
\[
[\hbarlift{X_r},\vlift{e_\alpha}]=
[\hlift{X_r},\vlift{e_\alpha}]+
[\conn ar\beta u^\beta\vlift{e_a},\vlift{e_\alpha}]
=\conn ir\alpha \vlift{e_i}-\conn ar\alpha \vlift{e_a}
=\conn \beta r\alpha\vlift{e_\beta};
\]
that is to say, the connection coefficients for $\bar{\nabla}$ are
$\conn \beta r\alpha$.

Now let us specialize to the case in which $E=TQ$ and $F=\C$, for a
constrained system of mechanical type.  Choose the basis of vector
fields $\{X_\alpha,X_a\}$ with $\{X_\alpha\}$ a basis for $\D$ and
$X_a$ orthogonal to $\D$; then $\vlift{X_i}$ corresponds to
$\vlift{e_i}$ above.  We take for $\nabla$ the Levi-Civita connection
of the kinetic energy metric; we have an induced connection
$\bar{\nabla}$ on sections of $\C$, i.e.\ on vector fields in $\D$,
with $\bar{\nabla}_{X_i}X_\alpha=\conn \beta i\alpha X_\beta$.  This
is all very much like the definition of the connection induced on a
submanifold in Riemannian geometry (the connection $\bar\nabla$ may in
fact be found in e.g.\ \cite{Lewis}); note that indeed the generalized
second fundamental form is given by
\[
\Pi^a_{\alpha\beta}=
\onehalf\big(\conn a\alpha\beta+\conn a\beta\alpha\big).
\]
Moreover,
\[
\Gamma=v^\alpha\clift{X_\alpha}-\Big(\conn \alpha\beta\gamma
v^\beta v^\gamma+(\grad\phi)^\alpha\Big)\vlift{X_\alpha}.
\]
We have written $\grad\phi$ for the vector field obtained by raising
the index on $d\phi$:\ that is to say, $g(X,\grad\phi)=X(\phi)$.  The
corresponding term in $\Gamma$ is (the vertical lift of) the
orthogonal projection of $\grad\phi$ onto $\D$. With our present
choice of basis this is $g^{\alpha\beta}X_\beta(\phi)X_\alpha$, where
$(g^{\alpha\beta})$ is the inverse of $(g_{\alpha\beta})$, and also
the corresponding block in the inverse of $(g_{ij})$. Likewise, the
fact that we have $\conn \alpha\beta\gamma$ in the first term in the
brackets is due to the choice of basis with $X_a$ orthogonal to $\D$,
and therefore $\vlift{X_a}$ fibre-normal to $\C$. Now on $\C$
(where $v^a=0$)
\[
\Gamma_0=v^\alpha\clift{X_\alpha}-\big(\conn i\alpha\beta
v^\alpha v^\beta+(\grad\phi)^i\big)\vlift{X_i},
\]
so that
\[
\Gamma-\Gamma_0=
\big(\Pi^a_{\alpha\beta}v^\alpha v^\beta+(\grad\phi)^a\Big)\vlift{X_a}.
\]

The vector fields
$\{\hlift{X_\alpha},\hlift{X_a},\vlift{X_\alpha},\vlift{X_a}\}$ form
a local basis for $\vectorfields{TQ}$, of course:\ denote by
$\{\theta^\alpha,\theta^a,\phi^\alpha,\phi^a\}$ the dual basis of
1-forms.
We have
\[
\hbarlift{X_i}=\hlift{X_i}+\conn ai\alpha v^\alpha\vlift{X_a};
\]
the vector fields $\{\hbarlift{X_i},\vlift{X_\alpha}\}$ form a basis
for $\vectorfields\C$, with $\{\hbarlift{X_\alpha},\vlift{X_\alpha}\}$ a basis
for $\tilde{\D}$ and $S(\hbarlift{X_a})=\vlift{X_a}$.  Notice that
since $\langle\vlift{X_a},\theta^\alpha\rangle
=\langle\vlift{X_a},\theta^b\rangle
=\langle\vlift{X_\alpha},\phi^\beta\rangle =0$,
$\{\iota^*\theta^\alpha,\iota^*\theta^a,\iota^*\phi^\alpha\}$ is the
dual basis of 1-forms on $\C$, and $\{\iota^*\theta^a\}$ is a basis
for $\tilde{\D}^\circ$. We don't claim that these bases correspond
exactly to the ones used in Section 3, but nevertheless we can use
them to analyse $\iota^*\omega_L$ in the same way as we did there.

We have
\[
\omega_L=g_{\alpha\beta}\phi^\alpha\wedge\theta^\beta+g_{ab}\phi^a\wedge\theta^b.
\]
Using the expression for $\omega_L$ given above we obtain, on $\C$,
\begin{align*}
\hbarlift{X_\alpha}\hook\omega_L&=-g_{\alpha\beta}\phi^\beta+
g_{ab}\conn a\alpha\beta v^\beta\theta^b\\
\hbarlift{X_a}\hook\omega_L&=-g_{ab}\phi^b+
g_{bc}\conn ba\alpha v^\alpha\theta^c\\
\vlift{X_\alpha}\hook\omega_L&=g_{\alpha\beta}\theta^\beta.
\end{align*}
Then for any vector
$Z=\xi^\alpha\hbarlift{X_\alpha}+\xi^a\hbarlift{X_a}+\eta^\alpha\vlift{X_\alpha}$
tangent to $\C$ we find that
\begin{align*}
Z\hook\omega_L(\hbarlift{X_\alpha})&=
g_{\alpha\beta}\eta^\beta-g_{ab}\conn a\alpha\beta v^\beta\xi^b\\
Z\hook\omega_L(\hbarlift{X_a})&=
g_{ab}\conn b\alpha\beta v^\beta\xi^\alpha+
(g_{ac}\conn cb\alpha-g_{bc}\conn ca\alpha)v^\alpha\xi^b\\
Z\hook\omega_L(\vlift{X_\alpha})&=g_{\alpha\beta}\xi^\beta.
\end{align*}
Thus the characteristic vector fields of $\omega_L$  take the form
$Z=\xi^a\hbarlift{X_a}+\eta^\alpha\vlift{X_\alpha}$ where
$\eta^\alpha=g^{\alpha\beta}g_{ab}\conn a\beta\gamma v^\gamma\xi^b$,
and $\xi^b$ satisfies
\[
(g_{ac}\conn cb\alpha-g_{bc}\conn ca\alpha)\xi^b=0.
\]
Now the connection coefficients are those for the Levi-Civita
connection, but with respect to an anholonomic frame such that
$g_{a\alpha}=0$. It follows that $g_{bc}\conn
ca\alpha+g_{\alpha\beta}\conn\beta ab=0$. Thus the condition on $\xi^a$
may be written
\[
g_{\alpha\beta}(\conn\beta ba-\conn\beta ab)\xi^b=0.
\]
But since the Levi-Civita connection has no torsion, $[X_a,X_b]=(\conn
iab-\conn iba)X_i$.  So the condition on $\xi^b$ amounts to
$[\xi^aX_a,X_b]\in\D^\perp$, or more generally
$[\xi^aX_a,\D^\perp]\in\D^\perp$.  That is, $\xi^a X_a$ (a vector
field in $\D^\perp$) has the property that its bracket with every
vector field in $\D^\perp$ belongs to $\D^\perp$.  This condition
determines a subdistribution of $\D^\perp$, which is easily seen to be
integrable (by the Jacobi identity).  It's obviously a fundamental
feature of $\D^\perp$ (or indeed of any distribution).  It is
perfectly possible, if the dimensions are right, for this distribution
to reduce to the zero vector, in which case $\iota^*\omega_L$ is
symplectic:\ this should be an interesting class of constrained
systems.  The so-called nonholonomic particle in 3 dimensions is an
example.

Now $\C$ is a vector bundle, and so has associated with it tensor
bundles of all types, which we shall call {\em $\C$-tensor bundles}.
Let $A$ be a symmetric type $(0,2)$ $\C$-tensor.  With respect to the
local basis of sections $\{X_\alpha\}$ of $\C$, $A$ has components
$A_{\alpha\beta}$; we can think of $A$ as the corresponding collection
of functions on $Q$, with the transformation rule induced by that of a
change of basis for $\D$.  Then $A_{\alpha\beta}v^\alpha v^\beta$ is a
well-defined function on $\C$.  Let $f$ be a function on $Q$.  We seek
the conditions for $\psi=A_{\alpha\beta}v^\alpha v^\beta+f$ to be a
conserved quantity for $\Gamma$.  The vector field
$Z_\psi\in\tilde{\D}$ from Theorem~\ref{int} determined by $\psi$ is
easily seen to be given by
\[
g^{\alpha\beta}\big(\hbarlift{X_\alpha}(\psi)\vlift{X_\beta}
-\vlift{X_\alpha}(\psi)\hbarlift{X_\beta}\big),
\]
so $\psi$ is conserved if and only if
\[
g^{\alpha\beta}\big(\hbarlift{X_\alpha}(\psi)\vlift{X_\beta}(E_L)
-\vlift{X_\alpha}(\psi)\hbarlift{X_\beta}(E_L)\big)=0.
\]
In the computation it helps to note that we can replace
$\hbarlift{X_\alpha}$ by $\hlift{X_\alpha}$ since there are no
occurrences of $v^a$; $\hlift{X_\alpha}(v^\beta)=-\conn
\beta\alpha\gamma v^\gamma$ on $\C$; and $\hlift{X_\alpha}(T)=0$ where
$T$ is the kinetic energy.  We obtain
\[
(X_\alpha(A_{\beta\gamma})-A_{\delta\gamma}\conn \delta\alpha\beta
-A_{\beta\delta}\conn \delta\alpha\gamma)v^\alpha v^\beta v^\gamma
+\big(X_\alpha(f)-2A_{\alpha\beta}(\grad\phi)^\beta\big) v^\alpha=0.
\]
So the necessary and sufficient conditions for
$\psi=A_{\beta\gamma}v^\beta v^\gamma+f$ to be a constant are that
\[
\bar{\nabla}_{(\alpha}A_{\beta\gamma)}=0,\quad
X_\alpha(f)=2A_{\alpha\beta}(\grad\phi)^\beta.
\]
(We have used brackets around indices to denote symmetrization, so
that for example $T_{(ijk)}$ are the components of the symmetrized
tensor derived from a tensor $T$, ie $\frac16 \sum T_{ijk}$ where the
sum is over all permutations of $i$, $j$ and $k$.  In view of the
assumption that $A_{\alpha\beta}$ is symmetric in its indices,
$\bar{\nabla}_{(\alpha}A_{\beta\gamma)}$ is in this case a constant
multiple of the cyclic sum.)

This is a direct analogue of the corresponding result for the
unconstrained system (see \cite{HS}).  That this is so is due to the
fact that we are working with an anholonomic frame, to the choice of
frame, and to the use of the induced connection $\bar{\nabla}$.

The next question is what happens if $A$ is actually the restriction
of a tensor on $Q$?  The answer is easier to state if we assume that
the tensor on $Q$ is also symmetric, but this is not essential --- all
we require is that the restriction to $\D$ is symmetric.  On the other
hand, there is no significant loss of generality in working with
symmetric tensors.  So let $A$ be a symmetric type $(0,2)$ tensor on
$Q$.  For the restriction to $\C$ of the function $A_{ij}v^iv^j+f$ to
be a conserved quantity for $\Gamma$ the components $A_{\alpha\beta}$
of $A$ must satisfy the conditions above.  Since $A$ is a tensor on
$Q$ we can attempt to express those conditions in terms of $\nabla$.
Then the condition $\bar{\nabla}_{(\alpha}A_{\beta\gamma)}=0$ becomes
\[
\nabla_{(\alpha}A_{\beta\gamma)}+A_{a(\alpha}\Pi^a_{\beta\gamma)}=0.
\]
We are not forced to take $A_{a(\alpha}\Pi^a_{\beta\gamma)}=0$,
though it is obviously convenient if it holds. The condition
$X_\alpha(f)=2A_{\alpha\beta}(\grad\phi)^\beta$ is unchanged.

Finally, suppose that $A_{ij}v^iv^j+f$ is a conserved quantity for
the unconstrained system. Under what conditions is it also a
conserved quantity for the constrained system? From the assumption
that it is conserved for the unconstrained system we have
\[
\nabla_{(i}A_{jk)}=0,\quad X_i(f)=2A_{ij}(\grad\phi)^j.
\]
From the first, in particular $\nabla_{(\alpha}A_{\beta\gamma)}=0$,
so we require in addition that $A_{a(\alpha}\Pi^a_{\beta\gamma)}=0$.
From the second, $X_\alpha(f)=2A_{\alpha\beta}(\grad\phi)^\beta+
2A_{\alpha a}(\grad\phi)^a$, so we require in addition that
$A_{\alpha a}(\grad\phi)^a=0$.

These results include and extend those of Iliev and Semerdzhiev
\cite{Iliev,IlSem} on quadratic integrals.  It is easy to extend them
also to constants of higher degree in $v$; however, this is the most
interesting case because of the interaction between $f$ and $\phi$.
For higher degree constants of the form
$A_{\alpha\beta\ldots\delta}v^\alpha v^\beta\cdots v^\delta +f$ the
conditions corresponding to the first set are
\[
\bar{\nabla}_{(\alpha}A_{\beta\gamma\ldots)}=0,\quad
X_\alpha(f)=0,\quad
A_{\alpha\beta\ldots\delta}(\grad\phi)^\delta=0.
\]

\subsubsection*{Acknowledgements}
The first author is a Guest Professor at Ghent University:\ he is
grateful to the Department of Mathematics for its hospitality.  The
second author is a Postdoctoral Fellow of the Research Foundation --
Flanders (FWO).  This work is part of the {\sc irses} project {\sc
geomech} (nr.\ 246981) within the 7th European Community Framework
Programme.  We are indebted to W.\ Sarlet for many useful discussions.


\begin{thebibliography}{99}

\bibitem{BatesSny} L.\ Bates and J.\ \'{S}niatycki, Nonholonomic
reduction, {\em Rep.\ Math.\ Phys.} {\bf 32} (1993) 99--115.

\bibitem{Bloch}
A.\,M.\ Bloch with the collaboration of J.\ Baillieul,
P.\ Crouch and J.\,E.\ Marsden, {\em Nonholonomic Mechanics and
Control\/} (Springer, 2003).

\bibitem{BKMM}
A.\,M.\ Bloch, P.\,S.\ Krishnaprasad, J.\,E.\ Marsden and
R.\,M.\ Murray, Nonholonomic mechanical systems with symmetry, {\em Arch.\
Rational Mech.\ Anal.} {\bf 136} (1996) 21--99.

\bibitem{AMZ}
A.\,M.\ Bloch, J.\,E.\ Marsden and D.\,V.\ Zenkov, Quasi-velocities
and symmetries in nonholonomic systems,  {\em Dynamical Systems\/} {\bf 24}
(2009)  187--222.

\bibitem{Frans} F.\ Cantrijn, J.\ Cort\'es, M.\ de Le\'on and D.\
Mart\'in de Diego, On the geometry of generalized Chaplygin systems,
{\em Math.\ Proc.\ Camb.\ Phil.\ Soc.} {\bf 132} (2002) 323--351.

\bibitem{CMR2} H.\ Cendra, J.\,E.\ Marsden and T.\,S.\ Ratiu,
Geometric mechanics, Lagrangian reduction, and nonholonomic systems,
in {\em Mathematics Unlimited --- 2001 and Beyond}, eds.\  B.\
Engquist and W.\ Schmid (Springer, 2001) pp.\ 221--273.

\bibitem{Cortes}
J.\ Cort\'es Monforte, {\em Geometric, Control
and Numerical Aspects of Nonholonomic Systems\/} (Lecture Notes in
Mathematics 1793, Springer, 2002).

\bibitem{CdL} J.\ Cort\'es and M.\ de Le\'{o}n, Reduction and
reconstruction of the dynamics of nonholonomic systems, {\em J.\
Phys.\ A} {\bf 32} (1999) 8615-–8645.

\bibitem{Mike} M.\ Crampin, Constants of the motion in Lagrangian
mechanics, {\em Int.\ J.\ Theor.\ Phys.} {\bf 16} (1977) 741-754.

\bibitem{HS}
M.\ Crampin, Hidden symmetries and Killing tensors, {\em Rep.\ Math.\
Phys.} {\bf 20} (1984) 31--40.

\bibitem{nonholvak}
M.\ Crampin and T.\ Mestdag, Anholonomic frames in constrained
dynamics, {\em Dynamical Systems} {\bf 25} (2010) 159--187.

\bibitem{nonholsym}
M.\ Crampin and T.\ Mestdag, Reduction of invariant constrained
systems using anholonomic frames, math.DG/1101.2551.

\bibitem{CP}
M.\ Crampin and F.\,A.\,E.\ Pirani, {\em Applicable Differential
Geometry\/} (LMS Lecture Notes 59, Cambridge University Press, 1988).

\bibitem{Cushman} R.\,H.\ Cushman, H.\ Duistermaat and J.\
\'{S}niatycki, {\em Geometry of Nonholonomically Constrained
Systems\/} (Advanced Series in Nonlinear Dynamics 26, World 
Scientific, 2010).

\bibitem{Cush} R.\ Cushman, D.\ Kemppainen, J.\ \'{S}niatycki and L.\
Bates, Geometry of nonholonomic constraints, {\em Rep.\ Math.\ Phys.}
{\bf 36} (1995) 275--286.

\bibitem{deL}
M.\ de Le\'{o}n and D.\ Martin de Diego, On the geometry of
non-holonomic Lagrangian systems, {\em J.\ Math.\ Phys.} {\bf  37} (1996)
3389--3414.

\bibitem{DR} M.\ de Le\'{o}n and P.\,R.\ Rodrigues, {\em Methods of
Differential Geometry in Analytical Mechanics\/} (North-Holland Math.\
Studies 158, Elsevier, 1989).

\bibitem{Ehlers} K.\ Ehlers, J. Koiller, R.\ Montgomery and P.\,M.\
Rios, Nonholonomic systems via moving frames:\ Cartan equivalence and
Chaplygin Hamiltonization, {\em The Breadth of Symplectic Geometry},
eds.\  J.\,E. Marsden {\em et al.} (Birkh\"{a}user, 2005) pp.\ 75--116.

\bibitem{Fasso2}
F.\ Fass\`{o}, A.\ Giacobbe and N.\ Sansonetto, Gauge conservation
laws and the momentum equation in nonholonomic mechanics, {\em Rep.\
Math.\ Phys.} {\bf 62} (2008) 345-–367.

\bibitem{Fasso4} F.\ Fass\`{o}, A.\ Giacobbe and N.\ Sansonetto, On
the number of weakly Noetherian constants of motion of nonholonomic
systems, {\em J.\ Geom.\ Mech.} {\bf 1} (2009) 389--416.

\bibitem{Fasso}
F.\ Fass\`{o}, A.\ Ramos and N.\ Sansonetto, The reaction-annihilator
distribution and the nonholonomic Noether theorem for lifted actions,
{\em Regul.\ Chaotic Dyn.} {\bf 12} (2007) 579--588.

\bibitem{Fasso3} F.\ Fass\`{o} and N.\ Sansonetto, An elemental
overview of the nonholonomic Noether theorem, {\em Int.\ J.\ Geom.\
Methods Mod.\ Phys.} {\bf 6} (2009) 1343--1355.

\bibitem{Gia}
G.\ Giachetta, First integrals of non-holonomic systems and their
generators, {\em J.\ Phys.\ A:\ Math.\ Gen.} {\bf 33} (2000) 5369--5389.

\bibitem{Iliev}
Il.\ Iliev, On first integrals of a nonholonomic mechanical system,
{\em J.\ Appl.\ Math.\ Mech.} {\bf 39} (1975) 147--150.

\bibitem{IlSem}
Il.\ Iliev and Khr.\ Semerdzdhiev, Relations between first integrals
of a nonholonomic mechanical system and of the corresponding system
freed of constraints, {\em J.\ Appl.\ Math.\ Mech.} {\bf 36} (1972) 381--388.

\bibitem{Koiller} J.\ Koiller, Reduction of some classical
non-holonomic systems with symmetry, {\em Arch.\ Rat.\ Mech.\ Anal.}\ {\bf 118}
(1992) 113--148.

\bibitem{Kos} Y.\ Kosman-Schwarzbach, {\em The Noether Theorems.
Invariance and Conservation Laws in the Twentieth Century\/} 
(Springer, 2011).

\bibitem{Krupka} D.\ Krupka, O.\ Krupkov\'a and D.\ Saunders, The
Cartan form and its generalizations in the calculus of variations,
{\em Int.\ J.\ Geom.\ Methods Mod.\ Phys.} {\bf 7} (2010) 631-654.

\bibitem{Lewis} A.\,D.\ Lewis, Affine connections and distributions with
applications to nonholonomic mechanics, {\em Rep.\ Math.\ Phys.} {\bf
42} (1998), 135--164.

\bibitem{Marmo} G.\ Marmo, G.\ Morandi, A.\ Simoni and E.\,C.\,G\
Sudarshan, Quasi-invariance and central extensions, {\em Phys.\ Rev.\
D} {\bf 37} (1988) 2196--2205.

\bibitem{Mont} R.\ Montgomery, {\em A Tour of Subriemannian
Geometries, their Geodesics and Applications} (AMS, 2002).

\bibitem{WillyFrans} W.\ Sarlet and F.\ Cantrijn, Generalizations of
Noether's theorem in classical mechanics, {\em SIAM Review} {\bf 23}
(1981) 467--494.

\bibitem{Zenkov}
D.\,V.\ Zenkov, Linear conservation laws of nonholonomic systems with
symmetry, {\em Proceedings of the Fourth International Conference on
Dynamical Systems and Differential Equations,} Wilmington, NC, 2002,
pp.\ 967--976.

\end{thebibliography}
\end{document}